\newif\ifarxiv
\ifarxiv \usepackage{amsthm} \fi
\setlist[enumerate]{leftmargin=.5in}
\setlist[itemize]{leftmargin=.5in}
\tikzset{curve/.style={settings={#1},to path={(\tikztostart)
    .. controls ($(\tikztostart)!\pv{pos}!(\tikztotarget)!\pv{height}!270:(\tikztotarget)$)
    and ($(\tikztostart)!1-\pv{pos}!(\tikztotarget)!\pv{height}!270:(\tikztotarget)$)
    .. (\tikztotarget)\tikztonodes}},
    settings/.code={\tikzset{quiver/.cd,#1}
        \def\pv##1{\pgfkeysvalueof{/tikz/quiver/##1}}},
    quiver/.cd,pos/.initial=0.35,height/.initial=0}
\newtheorem{theorem}{Theorem}
\numberwithin{theorem}{section}
\newtheorem{proposition}[theorem]{Proposition}
\newtheorem{lemma}[theorem]{Lemma}
\newtheorem{corollary}[theorem]{Corollary}
\theoremstyle{definition}
\newtheorem{definition}[theorem]{Definition}
\newtheorem{example}[theorem]{Example}
\theoremstyle{remark}
\newtheorem{remark}[theorem]{Remark}
\newcommand{\newjournalremark}[2]{
  \theoremstyle{plain}
  \theoremheaderfont{\normalfont\itshape}
  \theorembodyfont{\normalfont}
  \theoremseparator{.}
  \theoremsymbol{}
  \newtheorem{#1}[theorem]{#2}
}
\newcommand{\RR}{\mathbb{R}}
\newcommand{\CC}{\mathbb{C}}
\newcommand{\PP}{\mathbb{P}}
\newcommand{\Mcal}{\mathcal{M}}
\newcommand{\tr}{\mathrm{tr}}
\newcommand{\Sym}{\mathrm{Sym}_2}
\newcommand{\MLD}{\mathrm{MLD}_{F}(X)}
\newcommand{\myMLD}{\operatorname{MLD}}
\newcommand{\bu}{{\boldsymbol u}}
\newcommand{\bv}{{\boldsymbol v}}
\newcommand{\vectorspace}{\mathcal{L}}
\newcommand{\secondvectorspace}{\mathcal{W}}
\newcommand{\MLE}{\operatorname{MLE}}
\newcommand{\dashto}{\dashrightarrow}
\newcommand{\sm}{\mathrm{sm}}
\renewcommand{\Im}{\operatorname{Im}}
\newcommand{\id}{\operatorname{id}}
\newcommand{\Span}{\operatorname{Span}}
\newcommand{\isom}{\cong}
\newcommand{\Hom}{\operatorname{Hom}}
\newcommand{\rad}{\operatorname{rad}}
\newcommand{\SL}{\operatorname{SL}}
\newcommand{\Lcal}{\mathcal L}
\newcommand{\pa}{\operatorname{pa}}
\newcommand{\diag}{\operatorname{diag}}
\newcommand{\resp}[1]{#1}
\definecolor{forest}{RGB}{11,128,35}
\title[Diff.\ equations for Gaussian statistical models with rational MLE]{Differential equations for Gaussian statistical models with rational maximum likelihood estimator}
\author[Am\'endola, Gustafsson, Kohn, Marigliano, Seigal]{Carlos Am\'{e}ndola, Lukas Gustafsson, Kathl\'{e}n  Kohn,\\ Orlando Marigliano, Anna Seigal}
\date{\today}
\newenvironment{ourkeywords}{\footnotesize\textbf{Keywords:}}{}
\newenvironment{MSCcodes}{\footnotesize\textbf{MSC codes:}}{}
\newenvironment{ourkeywords}{\begin{keywords}}{\end{keywords}}
\newcommand{\qedhere}{}
\title{Differential equations for Gaussian statistical models \\ with rational maximum likelihood estimator}
\author{Carlos Am\'{e}ndola\footnote{Technische Universität Berlin, Germany (\email{amendola@math.tu-berlin.de})}, Lukas Gustafsson\footnote{KTH Royal Institute of Technology, Sweden (\email{lukasgu@kth.se}, \email{kathlen@kth.se}, \email{orlandom@kth.se})}, Kathl\'{e}n  Kohn\footnotemark[2],\\ Orlando Marigliano\footnotemark[2], Anna Seigal\footnote{Harvard University, USA (\email{aseigal@seas.harvard.edu})}}
\date{\today}
\begin{document}

\begin{abstract} 
We study multivariate Gaussian statistical models whose maximum likelihood estimator (MLE) is a rational function of the observed data.
We establish a one-to-one correspondence between such models and the solutions to a nonlinear first-order partial differential equation (PDE).
Using our correspondence, we reinterpret familiar classes of models with rational MLE, 
such as directed (and decomposable undirected) Gaussian graphical models. We also find new models with rational MLE.
For linear concentration models with rational MLE, we show that homaloidal polynomials from birational geometry lead to solutions to the PDE.
We thus shed light on the problem of classifying Gaussian models with rational MLE by relating it to the open problem in birational geometry of classifying homaloidal polynomials.
\end{abstract}

\maketitle

\begin{ourkeywords}
maximum likelihood degree, multivariate Gaussian, homaloidal polynomial
\end{ourkeywords}

\begin{MSCcodes}
62R01, 
62F10, 
62H22, 
14E05, 
35C11, 
35F20  
\end{MSCcodes}

\section{Introduction}

The \emph{maximum likelihood degree} (ML degree) of a statistical model
is the number of complex critical points of the likelihood function given general data. It
was introduced in \cite{CHKS06} for discrete statistical models and in \cite{SU10} for Gaussian models.
For models with ML degree one, the likelihood function for general data has a unique critical point, which is the maximum likelihood estimate given that data. The \emph{maximum likelihood estimator} (MLE) is the function that maps data to its maximum likelihood estimate. A model has ML degree one if and only if the MLE is a rational function of the data. 

A classification of discrete statistical models of ML degree one was obtained in \cite{Huh14(1),DMS21(1)}. 
The classification follows a two step procedure.
The first step shows that discrete  models of ML degree one are the solutions to a system of partial differential equations (PDEs) \cite[Lemma 15]{Huh14(1)}.
The second step uses Horn uniformization to parametrize all solutions to the PDEs \cite[Lemma 16]{Huh14(1)}.

This article pursues the analogous classification for Gaussian models of ML degree one.
We show that varieties of Gaussian ML degree one are in bijection with solutions to a nonlinear first-order PDE.
This is analogous to the first step of \cite{Huh14(1)}.
For the second step, we show that parametrizing the solutions to our PDE specializes to an open problem in classical algebraic geometry concerning \emph{homaloidal polynomials}, which have been extensively studied \cite{Dol00, DP03, Huh14(2), SST21}.
We leave it as an open problem for future work to parametrize all solutions to our \emph{homaloidal PDE}. 

{\bf Other related work.}
The concept of ML degree fits into the perspective of \emph{likelihood geometry}, introduced by Huh and Sturmfels \cite{huh2014likelihood} and part of the field of algebraic statistics \cite{sullivant2018algebraic}.
In the discrete setting, Huh \cite{Huh13} studied the ML degree of a very affine variety and showed that, for smooth varieties, the ML degree is equal to the topological Euler characteristic. ML degrees have been studied for toric varieties, which in statistics correspond to discrete exponential families~\cite{ABBGHHNRID19}. \resp{In particular, Garcia-Puente and Sottile showed that the discrete statistical model associated to a toric patch has ML degree one if and only if a certain Laurent polynomial associated to the toric patch is homaloidal with respect to the logarithmic toric differential~\cite{garcia-puente-sottile}. This highlights another setting where a homaloidal-like property is related to models with ML degree one. Graf von Bothmer, Ranestad, and  Sottile then used this relationship to classify two-dimensional toric patches whose discrete statistical models have ML degree one~\cite{bothmer-ranestad-sottile}.}

In the Gaussian setting, the focus has largely been on linear concentration models; i.e., models where the inverse covariance matrices lie on a linear space of symmetric matrices \cite{AGKMS21,amendola2021likelihood,boege2021reciprocal,dye2021maximum,davies2021coloured,eur2021reciprocal,jiang2021linear,MMMSV20,MMW21}. Further work relates ML degrees to Euler characteristics in the Gaussian setting \cite{DRGS22}. The ML degree was also studied for \emph{exponential varieties} \cite{MSUZ16}.

\medskip

We now introduce our main concepts. Let $\mathcal M$ be a semi-algebraic subset of the cone of real positive definite symmetric $m\times m$ matrices. We regard $\mathcal M$ as a set of concentration (inverse covariance) matrices parametrizing a mean-centered Gaussian statistical model. We typically assume that $\Mcal$ is \emph{scaling-invariant}; that is, $K \in \mathcal M$ implies 
$\lambda K \in \mathcal M$ for all positive scalars $\lambda \in \RR$. 
That is, rescaling of measurements does not affect membership in the model. Given data $Y_1,\dots,Y_n \in \RR^m$, the Gaussian \emph{log-likelihood function} is
\begin{equation}
    \label{eqn:lS}
\ell_S : \Mcal  \to \RR,\quad  K \mapsto \frac{n}{2} (\log \det (K) - \tr (K S) - m\log(2\pi)).
\end{equation} 
where $S=\frac{1}{n} \sum_{i=1}^n Y_i Y_i^\top$ is the sample \emph{covariance} matrix.
In contrast,
the points in the model are parametrized by \emph{concentration} matrices. 
We choose this parametrization to make the trace term linear in both arguments:
the trace term is a canonical pairing of a vector space with its dual. We will find this perspective helpful later.

We pass from statistical models to algebraic varieties.
We denote the Zariski closure of the complexification of $\mathcal M$ by $X$. 
The log-likelihood function extends to a function $\ell_S$ on $X$. 
The ML degree of $X$ is the number of complex critical points of this function for a general complex symmetric $m \times m$ matrix $S \in \Sym(\mathbb C^m)$.

The rest of this paper is organized as follows. We reinterpret the Gaussian log-likelihood function in a coordinate-free context, using projective varieties and homogeneous polynomials in Section~\ref{sec:likelihood}. We introduce the homaloidal PDE in Section~\ref{sec:homaloidalPDE}.
Our main result is a one-to-one correspondence between varieties of Gaussian ML degree one and solutions to the PDE (Theorem \ref{thm:pde}). Equally important is a parallel result on another PDE which is uniquely satisfied by the MLEs of such varieties (Theorem \ref{thm:3conditionspsi}). 
We study solutions to the homaloidal PDE coming from linear spaces in Section \ref{sec:linear_and_homaloidal} and find that the linear spaces of ML degree one correspond to homaloidal polynomials.  
We revisit known families of Gaussian graphical models with ML degree one in Section~\ref{sec: graphicalmodels}, where we also formulate our main result in terms of usual Gaussian statistical models (Corollary~\ref{cor:pdeSym}). We take first steps towards parametrizing the solutions to the PDE in Section~\ref{sec:solutions}, and produce new ML degree one varieties in Section~\ref{sec:constructing}. Code for our computational examples can be found at the MathRepo repository \url{https://mathrepo.mis.mpg.de/GaussianMLDeg1}.

\section{A coordinate-free Gaussian likelihood function}
\label{sec:likelihood}

This section gives a coordinate-free definition of the Gaussian ML degree of an embedded variety, extending ideas from \cite{MSUZ16,DRGS22}.
In this paper, $\mathcal L$ denotes a finite-dimensional $\mathbb C$-vector space.
Before focusing on the ML degree, we recall that the Jacobian of a rational map $\varphi : X\dashto Y$ between algebraic varieties $X$ and~$Y$ is a rational map
\[
J\varphi : X\dashto \Hom(TX, TY),
\quad p\mapsto J_p\varphi : T_pX\to T_pY.
\]
If $X = \mathcal L$, we identify the tangent space $T_p \mathcal L$ with~$\mathcal L$. In this case, the Jacobian of $\varphi:\mathcal L\dashto \mathbb C$ is the gradient
\[
\nabla\varphi : \mathcal L\dashto \mathcal L^\ast,\quad
p\mapsto \nabla_p\varphi.
\]

We give a coordinate-free definition of the Gaussian ML degree of a variety \resp{$X\!\subseteq \Sym(\CC^m)$}.
This definition replaces the space of symmetric matrices by an arbitrary ambient linear space $\mathcal{L}$.
This offers a simplification when $X$ lies in a low-dimensional affine subspace of the space of symmetric matrices. \resp{Indeed, it allows us to consider only data belonging to that linear subspace instead of having to account for the whole of $\Sym$.}
\resp{We show in Proposition \ref{prop:statistical} that the coordinate-free Gaussian ML degree is equivalent to the Gaussian ML degree previously studied in the literature. The latter is defined as the number of critical points $K$ of the log-likelihood function \eqref{eqn:lS}, see e.g.~\cite{SU10} and~\cite[Def.\ 2.1.4, Prop.\ 2.1.12]{lectures-algebraic-statistics}.}

In the definition below, we count the critical points of a function $\ell$ on $X$. Note that~$\ell$ can only be differentiated along $X$ at its smooth locus $X^{\sm}$.

\begin{definition}\label{ml-degree-general}
Let $\vectorspace$ be a
finite-dimensional
$\CC$-vector space, let $X\subseteq \vectorspace$ be an affine variety, and fix a polynomial $F$ on $\vectorspace$. For $\bu\in\vectorspace^\ast$ and $p\in \vectorspace\setminus V(F)$,  the  \emph{(Gaussian) log-likelihood} is
\[
\ell_{F,\bu}(p) \coloneqq \log(F(p)) - \bu(p).
\]
The  \emph{(Gaussian) ML degree} of $X$ with respect to $F$, denoted $\MLD$, is the number of critical points of $\ell_{F,\bu}$ over the domain ${X^{\sm}\setminus V(F)}$ for general $\bu$.
\end{definition}

\begin{remark}
To define the log-likelihood $\ell_{F,\bu}$ as a function on $\vectorspace\setminus V(F)$ we must choose a local branch of the logarithm around each $p\in \vectorspace\setminus V(F)$. The gradient $\nabla_p \ell_{F,\bu}\in \vectorspace^{\ast}$ does not depend on this choice. The number $\MLD$ is the cardinality of the set
\[
\mathcal C\coloneqq \{p\in X^{\mathrm{sm}}\setminus V(F) \mid
T_p X \subseteq \ker(\nabla_p \ell_{F,\bu})\}
\]
for general $\bu$. This cardinality does not depend on (general) $\bu$.
Moreover,
$\mathcal C$ is reduced as a scheme, cf.~\cite[Lemma~2.5]{DRGS22}.
\end{remark}

\resp{
\begin{remark}\label{irreducible-mld}
The ML degree of a variety with respect to a polynomial $F$ is the sum of the ML degrees of its connected components since $T_p X\subseteq \ker(\nabla_p \ell_{F,\bu})$ is a local condition in $p$. The above still holds if we replace ``connected'' by ``irreducible'',
since the intersection of irreducible components is never part of the smooth locus. 
For this reason, we assume from now on that $X$ is irreducible.
\end{remark}
}

\resp{The following lemma gives the sense in which $\MLD$ is independent of the choice of embedding of $X$ into a linear space.}

\resp{\begin{lemma}\label{linear-transformation}
Let $\mathcal L$ and $\mathcal L'$ be finite-dimensional $\mathbb C$-vector spaces, $X\subseteq \mathcal L$ an affine variety, $G$ a polynomial on $\mathcal L'$, and $\mathcal A:\mathcal L\to\mathcal L'$ an affine-linear embedding. Define $F = G\circ \mathcal A$. Then $\myMLD_{F}(X) = \myMLD_{G}(\mathcal A(X))$.
\end{lemma}
\begin{proof}
Let $\mathcal A^*:(\mathcal L')^*\to \mathcal L^*$ be the dual affine-linear map. For $\bu'\in(\mathcal L')^*$ and $p\in X$,
\[
\ell_{G,\bu'}(\mathcal A(p))
= \log G (\mathcal A(p)) - \mathcal \bu'(\mathcal A(p))
= \log F (p) - \mathcal A^*(\bu')(p)
= \ell_{F,\mathcal A^*(\bu')}(p).
\]
Thus, the critical points of $\ell_{G,\bu'}$ over $\mathcal A(X)$ correspond one-to-one with the critical points of $\ell_{F,\mathcal A^*(\bu')}$ over $X$. Since $\mathcal A^*$ is surjective, a general $\bu\in\mathcal L^*$ has the form $\mathcal A^*(\bu')$ for a general $\bu'\in (\mathcal L')^*$. This completes the proof.
\end{proof}
}

We now prove the equivalence of the usual and coordinate-free 
ML degrees. 

\begin{proposition}\label{prop:statistical} 
Let $\det$ denote the determinant on $\Sym(\mathbb C^m)$.
\begin{enumerate}[label=(\alph*)]
\item
Let $X$ be a subvariety of $\Sym(\mathbb C^m)$.
Then $\mathrm{MLD}_{\det}(X)$ is the usual Gaussian ML degree of $X$.
\item Conversely, let $\vectorspace$ be a finite-dimensional
$\mathbb C$-vector space, $X\subseteq \vectorspace$ a variety, and $F$ a polynomial on $\vectorspace$. Then there exists an affine-linear embedding $\mathcal{A}: \vectorspace \to \Sym(\mathbb C^m)$ such that $\MLD = \mathrm{MLD}_{\det}(\mathcal{A}(X))$.
\end{enumerate}
\end{proposition}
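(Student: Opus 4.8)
The plan is to match the two likelihood functions under the trace pairing. I would identify $\Sym(\CC^m)^\ast$ with $\Sym(\CC^m)$ through the nondegenerate pairing $\langle K,S\rangle = \tr(KS)$, so that every $\bu\in\Sym(\CC^m)^\ast$ has the form $\bu(\,\cdot\,)=\tr(\,\cdot\,S)$ for a unique symmetric matrix $S$, and a general $\bu$ corresponds to a general $S$. Under this identification $\ell_{\det,\bu}(K)=\log\det(K)-\tr(KS)$, which differs from the statistical log-likelihood $\ell_S$ in \eqref{eqn:lS} only by the positive scalar factor $n/2$ and an additive constant. Since neither a positive rescaling nor an additive constant changes the gradient on $X^{\sm}$, the two functions share the same critical locus, and the domains agree because both require $\det(K)\neq 0$. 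Hence the critical-point counts for general data coincide, giving $\mathrm{MLD}_{\det}(X)$ equal to the usual Gaussian ML degree.

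\textbf{Part (b).} Here the plan is to realize a fixed power of $F$ as the determinant of a symmetric matrix of affine-linear forms. First I would invoke the classical universality of the determinant: every polynomial admits an affine-linear determinantal representation $F=\det M$ for some affine-linear $M:\vectorspace\to\CC^{r\times r}$. Symmetrizing by $p\mapsto N(p) \coloneqq \left(\begin{smallmatrix} 0 & iM(p)\\ iM(p)^\top & 0\end{smallmatrix}\right)$ produces a symmetric affine-linear map with $\det N = F^2$. Writing $A$ for the linear part of the eventual embedding $\mathcal A$, the chain rule gives $\nabla_p(\log\det\circ\mathcal A)=A^\ast\nabla_{\mathcal A(p)}\log\det = 2\,\nabla_p\log F$, while a linear functional $\bv$ on $\Sym(\CC^m)$ pulls back to one with linear part $A^\ast\bv$. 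Provided $A$ is injective, $A^\ast$ is surjective, so a general $\bv$ yields a general $\bu=A^\ast\bv\in\vectorspace^\ast$, and the critical equation for $\log\det-\bv$ on $\mathcal A(X)$ transforms into the critical equation $\nabla_p\log F-\tfrac12 A^\ast\bv\perp T_pX$ for $\ell_{F,\bu}$ on $X$. This matches the critical loci and hence the two ML degrees (the factor $2$ and the constant being absorbed into the general $\bu$).

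The main obstacle is ensuring that $\mathcal A$ is an \emph{embedding}, i.e.\ that $A$ is injective. The kernel $W$ of the linear part of $N$ is a subspace along which $F$ is invariant, so it need not be trivial. To repair this without altering the determinant, I would append diagonal blocks $\left(\begin{smallmatrix} 0 & 1\\ 1 & y_j\end{smallmatrix}\right)$ for linear forms $y_1,\dots,y_w$ whose restrictions form a basis of $W^\ast$. Each such block is symmetric with constant determinant $-1$, so $\det\mathcal A = (-1)^{w}F^2$ remains a nonzero constant times $F^2$, while the enlarged linear part $A$ is now injective; the target size is $m=2r+2w$. With $\mathcal A$ an embedding, $\mathcal A|_X$ is an isomorphism onto its image, smooth loci and tangent spaces correspond, and $V(\det)$ pulls back to $V(F)$, so the domains and the critical-point correspondence above are valid. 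I expect the only genuinely nontrivial input to be the existence of the determinantal representation, with the injectivity fix being careful but routine bookkeeping.
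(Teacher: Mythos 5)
Your part (a) is the paper's own argument: the trace pairing identifies a generic $\bu$ with a generic $S$, and the factor $\nicefrac{n}{2}$ and additive constant are immaterial for critical points, so nothing more to say there. In part (b) you take a genuinely different, and correct, route. The paper invokes the \emph{symmetric} determinantal representation theorem it cites as \cite[Theorem~13]{AW21} to write $F(x)=\det\left(A_0+\sum_i x_iA_i\right)$ with symmetric $A_i$, and when the linear parts $A_1,\dotsc,A_k$ are dependent it restores injectivity with the bordered matrix $B=\left(\begin{smallmatrix} D & A(x)\\ A(x) & 0\end{smallmatrix}\right)$, $D=\diag(x_1,\dotsc,x_{r-1},0,\dotsc,0)$, whose determinant is $\det A(x)^2$ up to sign; you instead start from the classical non-symmetric universality of the determinant, symmetrize by the anti-diagonal doubling $N=\left(\begin{smallmatrix} 0 & iM\\ iM^\top & 0\end{smallmatrix}\right)$ — where the factors of $i$ exactly cancel the block-swap sign $(-1)^r$, giving $\det N=F^2$ on the nose — and repair injectivity of the linear part by appending blocks $\left(\begin{smallmatrix} 0 & 1\\ 1 & y_j\end{smallmatrix}\right)$ of constant determinant. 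Both proofs rest on the same two pillars: a symmetric affine-linear representation of (a power of) $F$, and the invariance of the ML degree under $F\mapsto\lambda F^2$, which the paper isolates as~\eqref{eq:change-of-f} and which you absorb into the generality of $\bu$ via $\bu=\tfrac12 A^\ast\bv$; your chain-rule transport of critical equations along the embedding, including the point that surjectivity of $A^\ast$ converts general $\bv$ into general $\bu$, is sound. What each approach buys: the paper's construction represents $F$ itself (no squaring, ambient dimension $m$ rather than $2m$) whenever the symmetric representation already has independent linear parts, falling back to $F^2$ only in the degenerate case, but it requires the nontrivial symmetric representation theorem and a case split ($r\leq m$ versus $r>m$); your construction needs only the much older non-symmetric universality result, handles the sign cleanly, and fixes injectivity uniformly with the $y_j$-blocks, at the cost of always working with $F^2$ and roughly doubling matrix sizes.
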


\begin{proof} 
Set $F=\det$ in 
Definition~\ref{ml-degree-general}.
A generic linear form $\bu$ has the form
$\bu(K)=\tr(KS)$, for $S\in \Sym(\mathbb C^m)$ generic, since the trace induces an isomorphism of $\Sym(\mathbb C^m)$ with its dual. This proves (a).

For (b), let $k=\dim(\mathcal L)$. There exist symmetric matrices $A_0,\dotsc,A_k$, such that
\begin{align} \label{eq:detRepresentation}
    F(x) = \det A(x), \quad
\text{where} \quad
A(x)\coloneqq A_0 + \sum_{i=1}^k x_i A_i,
\end{align}
by \cite[Theorem~13]{AW21}.
Let $m$ be the size of the $A_i$.
If the $A_1,\dotsc, A_k$ are linearly independent, then sending a basis of $\mathcal L$ to the tuple $(A_0+A_1,\dotsc,A_0+A_k)$ gives an affine-linear embedding $\mathcal{A}:\mathcal L \to \Sym(\CC^m)$ with $\myMLD_F(X) = \myMLD_{\det}(\mathcal A(X))$, \resp{by Lemma~\ref{linear-transformation}}. Otherwise, let $\{A_r,\dotsc, A_k\}$ be a maximal linearly independent subset of $\{A_1,\dotsc, A_k\}$, possibly after reordering.  If $r\leq m$, let
\[
B \coloneqq \begin{pmatrix}
D & A(x)\\
A(x) & 0
\end{pmatrix}
\quad \text{where }
D \coloneqq \diag(x_1,\dotsc,x_{r-1},0,\dotsc,0) \in \Sym(\CC^m).
\]
Then
$\det B = \det A(x)^2 = F^2$.
Furthermore, we have $B(x) = B_0 + \sum_{i=1}^k x_i B_i$ with linearly independent $B_1,\dotsc, B_k$. Define the embedding $\mathcal A:\mathcal L \to \Sym(\mathbb C^{2m})$ by sending a basis of $\mathcal L$ to $(B_0+B_1,\dotsc, B_0+B_k)$. Observe that 
\begin{align}\label{eq:change-of-f}
    \myMLD_{F}(X) = \myMLD_{\lambda F^n}(X)
\end{align}
for any nonzero scalar $\lambda$ and positive integer $n$, since $\nabla_p \ell_{\lambda F^n, \bu}
=
n\nabla_p\log F - \bu$. Hence $\myMLD_{\det}(\mathcal A(X)) = \myMLD_{F^2}(X) = \myMLD_{F}(X)$ and we are done.

If $r>m$, we increase the size of $A(x)$ by taking the direct sum with a suitably large identity matrix. We then apply the same argument to the resulting matrix $B$. 
\end{proof}

\resp{The previous proof shows that the choice of $\mathcal A$ does not matter for computing $\MLD$, provided $\mathcal A$ satisfies 
\[
\lambda F^n = \det \circ \mathcal A \quad \text{for some } \lambda \in \CC^* \text{ and } n \in \mathbb{N}.\]}
We give a hands-on example of the construction in the previous proof when the matrices $A_i$ are linearly dependent.

\begin{example}
Let $F(x_1,x_2,x_3) = (x_1 + x_3)(x_2 + x_3)$. Then $F(x) = \det A(x)$, where 
$$ A(x) = x_1 A_1 + x_2 A_2 + x_3 A_3 = \begin{pmatrix} 
x_1 + x_3 & 0 \\ 0 & x_2 + x_3 
\end{pmatrix}.
$$
However, the matrices $A_i$ relate via $A_1 = A_3 - A_2$. Using the proof of Proposition~\ref{prop:statistical}(b) with $r=m=2$, we write $F^2$ as the determinant of the matrix 
\[
\sum_{i=1}^3 x_i B_i = 
\begin{pmatrix}
x_1 & 0 & x_1+x_3 & 0\\
0& 0& 0& x_2+x_3 \\
x_1+x_3 & 0& 0& 0\\
0& x_2+x_3 &0 &0
\end{pmatrix}.
\]
Here, the $B_i$ are linearly independent.
\end{example}

To apply Proposition~\ref{prop:statistical} to a Gaussian statistical model $\mathcal M$ in the cone of positive-definite real symmetric $m\times m$ matrices, take the Zariski closure $X$ of $\mathcal M$ in $\Sym(\mathbb C^m)$ and let $\mathcal A$ be its affine span.
The restriction $F \coloneqq \det|_{\mathcal{A}}$ of the determinant to $\mathcal{A}$ satisfies 
\begin{align*}
    \MLD = \mathrm{MLD}_{\det}(X),
\end{align*}
which is the usual Gaussian ML degree of $\mathcal M$. The log-likelihood $\ell_S$ for a covariance matrix $S$ becomes the function $\ell_{\det, \bu}$ from Definition~\ref{ml-degree-general} with $\bu = \tr(S-)$.

\begin{example}
Let $\mathcal M\subseteq \Sym(\mathbb R^3)$ be the set of diagonal 
positive definite real symmetric $3\times 3$ 
matrices and let $X \isom \CC^3$ be its complex Zariski closure. 
The determinant restricts to the linear space $\mathcal{A}=X$ to give the cubic $F(x_1, x_2, x_3) = x_1 x_2 x_3$.
Then
\[
\mathrm{MLD}_{\det}(X) = \mathrm{MLD}_{x_1x_2x_3}(\CC^3) = 1, 
\]
where we verify the latter equality as follows.
For a generic linear form $\bu = u_1x_1 + u_2x_2 + u_3x_3$, 
the log-likelihood
$\ell_{F,\bu}(x) = \log(x_1x_2x_3)-\bu(x)$
has a unique critical point, namely the unique solution $(x_1,x_2,x_3) = (\nicefrac{1}{u_1}, \nicefrac{1}{u_2}, \nicefrac{1}{u_3}) \in \mathbb C^3$ to the equation
\[
\nabla\ell_{F,\bu}(x) =
\begin{pmatrix}
    \nicefrac{1}{x_1} - u_1 \\
    \nicefrac{1}{x_2} - u_2 \\
    \nicefrac{1}{x_3} - u_3
\end{pmatrix}
= 0. 
\] 
\end{example} 

\begin{remark}
Let $\mathcal L$ be a finite-dimensional \emph{real} vector space, $X\subseteq \mathcal L$ a variety, and $F$ a polynomial on $\mathcal L$. Then there still exists an affine-linear embedding $\mathcal{A}:\mathcal L\to \Sym(\mathbb R^m)$ such that $F$ becomes the restriction of the determinant. However, it is not always possible to find an embedding $\mathcal{A}$ that intersects the positive definite cone. Thus $\mathcal{A}$ need not yield a statistical model. When such an $\mathcal{A}$ exists, the polynomial $F$ is said to possess a \emph{definite symmetric determinantal representation}.
The problem of computing a definite symmetric determinantal representation of a real polynomial is well-studied in convex algebraic geometry, due to its connections to semidefinite programming and the generalized Lax conjecture \cite{blekherman2012semidefinite,CD20,D20}.
\end{remark}

From now on, we consider \emph{projective} varieties $X$, reflecting the assumption that our statistical models are scaling-invariant. In this case, the affine span of the affine cone $C_X$ over $X$ is a linear space and restricting the determinant to that linear subspace yields a homogeneous polynomial. We thus assume that $F$ is homogeneous and set \[
\myMLD_{F}(X) \coloneqq \myMLD_{F}(C_X).
\]

We now make a connection between the coordinate-free ML degree of a variety $X$ \resp{with respect to a certain polynomial $F$} and the Euclidean distance degree $\mathrm{EDD}(X)$, which counts the critical points of the Euclidean distance function to $X$ from a general point~\cite{euclidean-distance}.
This builds on the perspective in \cite[Corollary~3.3]{MRW21} and \cite[Theorem~1.1]{DRGS22}.

\begin{proposition}
\label{prop:EDdegree}
    Let $Q = \sum_{i=0}^n x_i^2$ be the Fermat quadric on $\mathbb{P}^n$ and $X \subseteq \mathbb{P}^n$ be a projective variety.
    Then the Euclidean distance degree of $X$ and the ML degree of $X$ with respect to $Q$ coincide; i.e.,   
$$ 
\mathrm{EDD}(X) = \mathrm{MLD}_{Q}(X).
$$
\end{proposition}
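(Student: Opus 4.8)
The plan is to compute the defining equations of the two critical loci on the affine cone $C_X\subseteq\CC^{n+1}$ and to match them by an explicit scaling. By the conventions in force both degrees are computed on the cone: $\myMLD_Q(X)=\myMLD_Q(C_X)$, and likewise $\mathrm{EDD}(X)$ is the Euclidean distance degree of $C_X$, namely the number of critical points of the squared distance $x\mapsto\sum_{i=0}^n(x_i-u_i)^2$ on $C_X^{\sm}$ for general $u\in\CC^{n+1}$. Write $\langle a,b\rangle\coloneqq\sum_{i=0}^n a_ib_i$ for the symmetric bilinear form polarizing $Q$, and use it to identify $\CC^{n+1}$ with its dual, so that a generic $\bu$ is $\langle u,-\rangle$ for a generic vector $u$. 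Since $\nabla_pQ=2\langle p,-\rangle$, the ML critical condition $T_pC_X\subseteq\ker(\nabla_p\ell_{Q,\bu})$ reads
\[
\frac{2p}{Q(p)}-u\ \in\ (T_pC_X)^{\perp},\qquad p\in C_X^{\sm}\setminus V(Q),
\]
while the ED critical condition is
\[
x-u\ \in\ (T_xC_X)^{\perp},\qquad x\in C_X^{\sm},
\]
where $\perp$ denotes the orthogonal complement with respect to $\langle\cdot,\cdot\rangle$.

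Next I would match these two conditions for one and the same generic vector $u$, via the mutually inverse scalings
\[
x\longmapsto p\coloneqq\frac{2}{Q(x)}\,x,\qquad p\longmapsto x\coloneqq\frac{2}{Q(p)}\,p.
\]
Because $C_X$ is a cone, $T_{cx}C_X=T_xC_X$ for every nonzero scalar $c$, so the tangent and normal spaces are constant along a ray. A direct computation gives $\frac{2p}{Q(p)}=x$ when $p=\frac{2}{Q(x)}x$, whence $\frac{2p}{Q(p)}-u=x-u$ lies in $(T_pC_X)^{\perp}=(T_xC_X)^{\perp}$ exactly when $x$ satisfies the ED condition; moreover $Q(p)=4/Q(x)\neq0$ keeps $p$ off $V(Q)$. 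The two maps are inverse to each other (one checks $\frac{2}{Q(p)}p=x$ from $Q(p)=4/Q(x)$) and preserve smoothness, since they move points along a fixed ray away from the origin. They therefore restrict to a bijection between the ED critical points lying off $V(Q)$ and the (by definition off $V(Q)$) ML critical points, for the \emph{same} $u$.

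Finally I would show that nothing is lost by discarding the isotropic locus. For general $u$ there are no ED critical points $x$ with $Q(x)=0$: such an $x$ forces $\langle u,x\rangle=\langle x,x\rangle=Q(x)=0$ (using $x\in T_xC_X$ for a cone) together with $u-x\in(T_xC_X)^{\perp}$, and a dimension count over $C_X^{\sm}\cap V(Q)$ shows that the admissible $u$ fill out at most an $n$-dimensional, hence proper, subvariety of $\CC^{n+1}$. Thus for general $u$ the bijection above is between the \emph{full} ED and ML critical sets. Since both sets are reduced of the expected cardinality — for the ML side this is the content of the Remark after Definition~\ref{ml-degree-general}, and for the ED side it is the standard transversality for general data — a bijection of points yields an equality of degrees, giving $\mathrm{EDD}(X)=\myMLD_Q(X)$. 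The main obstacle is exactly this last genericity step: ensuring that the scaling map neither drops ED critical points into the isotropic quadric nor manufactures spurious ones, and that reducedness holds simultaneously on both sides so that the point-bijection upgrades to an equality of the two counts.
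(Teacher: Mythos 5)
Your proof is correct and follows essentially the same route as the paper: it identifies $\CC^{n+1}$ with its dual via $Q$ and matches the two critical loci using the rational involution $x \mapsto 2x/Q(x) = \nabla\log Q$, which is exactly your pair of mutually inverse scalings (the tangent-space invariance along rays of the cone being the key point in both arguments). The only difference is that you spell out the genericity step ruling out ED critical points on $V(Q)$ and the reducedness of the critical loci, which the paper leaves implicit; since both degrees are defined as cardinalities of critical sets for general data, the bijection alone already suffices.
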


\begin{proof}
We identify $\CC^{n+1}$ and $(\CC^{n+1})^\ast$ via $Q$ and thus regard $x$ and $\bu$ as points of $\CC^{n+1}$.
We find a bijection between the critical points of $\ell_{Q,\bu}$ and the critical points of the Euclidean distance function $d_\bu^2(x)\coloneqq Q(x - \bu)$.
That is, between the sets
\[
\mathcal C(d^2_{\bu}) \coloneqq \{x\resp{ \in C_X} \mid x-\bu \perp T_x C_X\}
\quad\text{and}\quad
\mathcal C(\ell_\bu) \coloneqq \{x \resp{ \in C_X} \mid \frac{2x}{Q(x)}-\bu \perp T_x C_X\}.
\]
\resp{Here, $T_x C_X$ refers to the embedded tangent space, so that $x$ is an element of $T_x C_X$. Applying $\langle -,x \rangle$ to the definition of $\mathcal C(d_\bu^2)$ we see that $Q(x)-\langle \bu, x\rangle = 0$ for $x\in \mathcal C(d_\bu^2)$. Doing the same for $\mathcal C(\ell_\bu)$, we deduce $2 - \langle \bu, x\rangle = 0$ for $x\in \mathcal C(\ell_\bu)$.}
The required bijection is given in both directions by the rational involution $\mu : \CC^{n+1}\dashto\CC^{n+1}$ defined by $\mu(x)=\nabla\log Q=2x/Q(x)$.
\end{proof}

The goal of this paper is to characterize irreducible projective  varieties with ML degree one. 
Let $X\subseteq \PP(\vectorspace)$ be a projective variety and $F$ a homogeneous polynomial on $\vectorspace$. 
We denote by $C_X$ the affine cone over $X$.
If $\MLD = 1$, there is a map
\begin{equation}
\label{eqn:define_mle_map} 
    \MLE_{X,\vectorspace,F} : \vectorspace^\ast \dashrightarrow C_X^{\sm}\setminus V(F)
\end{equation}
that takes a general $\bu\in\vectorspace^{\ast}$ to the unique critical point of $\ell_{F,\bu}$ along $C_X$.
If $X$ is the Zariski closure of a statistical model in $\mathcal L = \Sym(\mathbb C^m)$ and $F=\det$, the map $\MLE_{X,\mathcal L, F}$ is the \emph{maximum likelihood estimator}.

\begin{proposition}\label{prop:mle-rational}
\resp{Let $X\subseteq \PP(\vectorspace)$ be a projective variety and $F$ a homogeneous polynomial on $\vectorspace$ such that $\MLD = 1$. Then} the map $\MLE_{X,\vectorspace,F}$ is rational and dominant.
Furthermore, if $X\subseteq \PP(\secondvectorspace)$ for some subspace $\secondvectorspace\subseteq \vectorspace$ and $\pi:\vectorspace^\ast\to\secondvectorspace^\ast$ is the restriction to $\secondvectorspace$, then
\[
{\MLE_{X,\vectorspace,F}} = {\MLE_{X,\secondvectorspace,F|_\secondvectorspace}} \circ \pi.
\]
\end{proposition}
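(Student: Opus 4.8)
The plan is to prove both assertions by studying the likelihood (critical‑point) correspondence and by exploiting that criticality along $C_X$ is only tested against tangent vectors of $C_X$.

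For rationality and dominance, I would introduce the incidence variety
\[
\mathcal I \coloneqq \overline{\{(\bu,p)\in \vectorspace^\ast\times (C_X^{\sm}\setminus V(F)) : T_pC_X\subseteq \ker \nabla_p\ell_{F,\bu}\}}
\]
with its projections $\pi_1:\mathcal I\to \vectorspace^\ast$ and $\pi_2:\mathcal I\to C_X$. The structural observation I rely on is that $\pi_2$ exhibits $\mathcal I$, over the smooth locus $U\coloneqq C_X^{\sm}\setminus V(F)$, as an affine‑space bundle. Writing $\nabla_p\ell_{F,\bu}=\nabla_p\log F-\bu$, the condition $T_pC_X\subseteq\ker\nabla_p\ell_{F,\bu}$ becomes the affine‑linear condition $\bu|_{T_pC_X}=\nabla_p\log F|_{T_pC_X}$, so the fibre $\pi_2^{-1}(p)$ is the coset of the annihilator $\{\bu:\bu|_{T_pC_X}=0\}\subseteq\vectorspace^\ast$ singled out by $\nabla_p\log F$. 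As $\dim T_pC_X$ is constant on the irreducible variety $U$, this makes $\mathcal I$ irreducible of dimension $\dim\vectorspace$, and shows $\pi_2$ is dominant (indeed surjective onto $U$).

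Next I would invoke the hypothesis $\MLD=1$. For general $\bu$ there is exactly one critical point $p$, and $(\bu,p)\in\mathcal I$, so $\pi_1$ is dominant; since $\dim\mathcal I=\dim\vectorspace^\ast$, the map $\pi_1$ is generically finite, and its general fibre is the critical set, a single reduced point by the Remark after Definition~\ref{ml-degree-general}. Thus $\pi_1$ is birational, $\MLE_{X,\vectorspace,F}=\pi_2\circ\pi_1^{-1}$ is a rational map, and it is dominant because $\pi_2$ is. For the compatibility with $\secondvectorspace$, the key is that the two likelihood functions agree on $C_X$: since $C_X\subseteq\secondvectorspace$, for $p\in C_X$ and $\bu\in\vectorspace^\ast$ we have $F(p)=F|_\secondvectorspace(p)$ and $\bu(p)=\pi(\bu)(p)$, whence $\ell_{F,\bu}|_{C_X}=\ell_{F|_\secondvectorspace,\pi(\bu)}|_{C_X}$. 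Because $T_pC_X\subseteq\secondvectorspace$, criticality along $C_X$ only probes these functions in tangent directions lying in $\secondvectorspace$, so the two critical loci on $C_X$ coincide for every $\bu$. As $\pi$ is surjective, a general $\bu$ has general image $\pi(\bu)$; hence the ML degrees agree and, when both equal one, the unique critical points agree, giving $\MLE_{X,\vectorspace,F}=\MLE_{X,\secondvectorspace,F|_\secondvectorspace}\circ\pi$.

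I expect the main obstacle to be the irreducibility of $\mathcal I$ required to upgrade ``general fibre of $\pi_1$ is one point'' to ``$\pi_1$ is birational''; this is exactly why I isolate the affine‑bundle structure of $\pi_2$ over the smooth locus, which makes both the irreducibility and the dimension count transparent. The second statement, by contrast, should reduce to the elementary remark that restricting the ambient functional $\bu$ to $\secondvectorspace$ leaves its values on $C_X$ unchanged, so that no genuine geometry beyond the identity $\ell_{F,\bu}|_{C_X}=\ell_{F|_\secondvectorspace,\pi(\bu)}|_{C_X}$ is needed.
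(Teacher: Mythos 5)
Your proof is correct and takes essentially the same route as the paper's: the same incidence variety of critical pairs $(\bu,p)$, with birationality of the projection to $\vectorspace^\ast$ coming from $\MLD=1$, dominance of the projection to $C_X$ coming from the section $p\mapsto\nabla_p\log F$, and the second statement from the identity $\ell_{F,\bu}=\ell_{F|_\secondvectorspace,\pi(\bu)}$ on $C_X^{\sm}\setminus V(F)$. The only real difference is that you justify the birationality of the projection to $\vectorspace^\ast$ in detail, via the affine (conormal) bundle structure and the resulting irreducibility and dimension count for the incidence variety --- a point the paper asserts tersely in this proof and only spells out later, in the proof of Proposition~\ref{rational-mle-mld-one}.
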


\begin{proof}
Consider the incidence variety $\mathfrak X \subseteq (C_X^{\sm}\setminus V(F))\times \vectorspace^\ast$ of all pairs $(p,\bu)$ such that $p$ is a critical point of $\ell_{F,\bu}$. Let $\pi_i$ be the projections from $\mathfrak X$ onto its $i$th factor.
By assumption, $\pi_2$ is birational. For a general $p\in C^\sm_X\setminus V(F)$ and $\bu\coloneqq \nabla_p \log F$, we have $\nabla_p \ell_{F,\bu} = \nabla_p\log F - \bu = 0$.
Thus, $p$ is a critical point of $\ell_{F,\bu}$, which shows that  $\pi_1$ is dominant.
Hence, the map 
$\MLE_{X,\mathcal{L}, F} = \pi_1\circ\pi_2^{-1}$ is rational and dominant.
The second statement holds since $\ell_{F,\bu} = \ell_{F|_\secondvectorspace, \bu|_\secondvectorspace}$ on $C_X^{\sm}\setminus V(F)$.
\end{proof}

\begin{remark}
As seen in the proof of Proposition~\ref{prop:mle-rational}, the map $p\mapsto \nabla_p \log F$ sends points in $C_X^\sm\setminus V(F)$ to linear forms $\bu$ such that $(p,\bu)$ is a critical pair. This is analogous to the fact that a statistical model can be viewed as a set of empirical probability distributions such that every model point is its own maximum likelihood estimate.
\end{remark}

A projective variety of ML degree one has rational maximum likelihood estimator, 
by Proposition~\ref{prop:mle-rational}.
We show the converse.

\begin{proposition} \label{rational-mle-mld-one}
Let $X\subseteq \PP(\mathcal L)$ be a projective variety and
\[
\Psi:\Lcal^\ast\dashto C_X^{\sm}\setminus V(F)
\]
a dominant rational map such that, for general $\bu\in\Lcal^*$, the point $\Psi(\bu)$ is a critical point of $\ell_{F,\bu}$ along $C_X$. Then $\myMLD_F(X) = 1$ and $\Psi = \MLE_{X,\mathcal L, F}$.
\end{proposition}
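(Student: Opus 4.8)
The plan is to reuse the incidence variety $\mathfrak X \subseteq (C_X^{\sm}\setminus V(F))\times\mathcal L^\ast$ with its projections $\pi_1,\pi_2$ from the proof of Proposition~\ref{prop:mle-rational}, and to reduce the claim to a degree count for the generically finite map $\pi_2$. Recall that $(p,\bu)\in\mathfrak X$ exactly when the linear form $\nabla_p\log F-\bu$ vanishes on $T_pC_X$; equivalently $\bu$ lies in the affine subspace $\nabla_p\log F+(T_pC_X)^\perp$ of $\mathcal L^\ast$. Since $\Psi(\bu)$ is a critical point of $\ell_{F,\bu}$ for general $\bu$, the assignment $s\colon \bu\mapsto(\Psi(\bu),\bu)$ defines a rational map $\mathcal L^\ast\dashto\mathfrak X$ with $\pi_2\circ s=\id_{\mathcal L^\ast}$; that is, $s$ is a rational section of $\pi_2$, and moreover $\pi_1\circ s=\Psi$.

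First I would establish that $\mathfrak X$ is irreducible of dimension $k\coloneqq\dim\mathcal L$. The map $(p,\bu)\mapsto(p,\bu-\nabla_p\log F)$ is an isomorphism from $\mathfrak X$ onto the total space of the conormal bundle of $C_X^{\sm}\setminus V(F)$ inside the trivial bundle with fibre $\mathcal L^\ast$, because $\nabla_p\log F=\nabla_pF/F(p)$ is regular away from $V(F)$. As $X$ is irreducible and $\Psi$ is dominant, $C_X^{\sm}\setminus V(F)$ is a nonempty open subset of the irreducible cone $C_X$, hence irreducible; the total space of a vector bundle over it is therefore irreducible, of dimension $\dim C_X+(k-\dim C_X)=k$.

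Next I would run the degree count. Since $\Psi(\bu)$ is a critical point for general $\bu$, the map $\pi_2$ is dominant, and as $\dim\mathfrak X=\dim\mathcal L^\ast=k$ it is generically finite with $\deg(\pi_2)=\myMLD_F(X)$ (the generic fibre being reduced, cf.\ the remark after Definition~\ref{ml-degree-general}). The section $s$ is itself dominant: its image closure is an irreducible subvariety of $\mathfrak X$ on which $\pi_2$ is already dominant, so it must equal $\mathfrak X$ by the dimension count. Multiplicativity of degree under composition of dominant rational maps of irreducible varieties then gives $\deg(\pi_2)\cdot\deg(s)=\deg(\id)=1$, forcing $\deg(\pi_2)=1$. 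Hence $\myMLD_F(X)=1$ and $\pi_2$ is birational. Finally, once $\myMLD_F(X)=1$, for general $\bu$ the function $\ell_{F,\bu}$ has a unique critical point along $C_X$, to which both $\Psi(\bu)$ and $\MLE_{X,\mathcal L,F}(\bu)$ are equal; therefore $\Psi=\MLE_{X,\mathcal L,F}$.

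The main obstacle is the irreducibility of $\mathfrak X$: without it, a rational section of the generically finite map $\pi_2$ would \emph{not} force $\deg(\pi_2)=1$, since such a section could meet a single sheet of a map of higher degree. The conormal-bundle description is precisely what rules this out, exhibiting $\mathfrak X$ as the total space of a vector bundle over an irreducible base and thereby licensing the degree-multiplicativity argument. A minor point to check along the way is that $\nabla\log F$ is regular on $C_X^{\sm}\setminus V(F)$, so that the bundle isomorphism is genuinely algebraic rather than merely set-theoretic.
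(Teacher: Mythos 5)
Your proof is correct and follows essentially the same route as the paper: the same incidence variety $\mathfrak X$, the same conormal-bundle description giving irreducibility and $\dim\mathfrak X=\dim\mathcal L^\ast$, and the same use of $\bu\mapsto(\Psi(\bu),\bu)$ as a rational section of $\pi_2$. The only cosmetic difference is that you conclude via multiplicativity of degrees, whereas the paper notes that an injective rational map between irreducible varieties of the same dimension is birational; these are interchangeable.
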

\begin{proof}
Let $\mathfrak X$ be the incidence variety from the proof of Proposition~\ref{prop:mle-rational}. Suppose $(p,\bu)$ and $(p,\bv)$ lie in $\mathfrak X$ for some $\bu, \bv\in \vectorspace^{\ast}$. Then 
 the linear forms $\bu$ and $\bv$ are the same when restricted to $T_p C_X$. 
Thus $\pi_1:\mathfrak X\to C^{\sm}_X\setminus V(F)$ makes $\mathfrak X$ into a vector bundle whose fiber at a base point $p$ is isomorphic to the co-normal space $N^\ast_pC_X$. Hence $\dim(\mathfrak X) = \dim(\vectorspace^\ast)$. The variety $X$ is irreducible since $\Psi$ is dominant, hence so is $\mathfrak X$. Define $\sigma:\vectorspace^\ast\dashto\mathfrak X$ by $\sigma(\bu) = (\Psi(\bu), \bu)$. Then $\sigma$ is a right-inverse of $\pi_2: \mathfrak X\to \vectorspace^{\ast}$, thus an injective rational map between irreducible varieties of the same dimension, thus birational with inverse~$\pi_2$. The map $\pi_2$ has degree one, thus $\MLD = 1$ and $\Psi = \pi_1\circ\pi_2^{-1} = \MLE_{X,\mathcal L, F}$.
\end{proof}

 \section{The homaloidal PDE}
\label{sec:homaloidalPDE}

In this section, we state and prove our main results. We characterize the map $\MLE_{X,\mathcal L, F}$ as the solution to a  PDE in Theorem~\ref{thm:3conditionspsi}. We then introduce the \emph{homaloidal PDE} and show that its solutions are precisely projective varieties of Gaussian ML degree one in Theorem~\ref{thm:pde}.
Our justification for the adjective \emph{homaloidal} is explained after Corollary~\ref{cor:homaloidal_solutions}. 

\begin{theorem} 
\label{thm:3conditionspsi}
Let $X \subseteq \PP (\vectorspace)$ be an irreducible projective variety and $F$ a homogeneous polynomial on $\vectorspace$. Then $\MLD = 1$ if and only if there exists a dominant rational map $\Psi: \vectorspace^\ast \dashrightarrow C_X$
that satisfies
\begin{enumerate}
\item[(a)] $\Psi(t \bu) = t^{-1} \Psi(\bu)$ for all $t\in \CC \setminus \{ 0\}$,
\item[(b)] $\nabla_{\bu} \log (F \circ \Psi) = - \Psi(\bu)$
for general $\bu\in \vectorspace^\ast$.
\end{enumerate}
In this case,
$\Psi$ is the function $\mathrm{MLE}_{X,\vectorspace,F}$.
\end{theorem}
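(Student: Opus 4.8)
The plan is to reduce both implications to the infinitesimal characterization of critical points already recorded in the remark after Definition~\ref{ml-degree-general}: a smooth point $p\in C_X\setminus V(F)$ is a critical point of $\ell_{F,\bu}$ along $C_X$ precisely when the covector $\nabla_p\log F - \bu$ annihilates $T_pC_X$. Granting this, the theorem becomes the assertion that conditions (a) and (b) on a dominant rational map $\Psi\colon \vectorspace^\ast\dashto C_X$ are together equivalent to the requirement that $\Psi(\bu)$ be a critical point of $\ell_{F,\bu}$ for general $\bu$; the conclusions $\MLD = 1$ and $\Psi = \MLE_{X,\vectorspace,F}$ then follow at once from Propositions~\ref{prop:mle-rational} and~\ref{rational-mle-mld-one}. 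Throughout, the two Euler relations will do the heavy lifting: homogeneity of $F$ of degree $d$ gives $(\nabla_p\log F)(p)=d$, while any cone satisfies $p\in T_pC_X$ and $T_{\lambda p}C_X = T_pC_X$ for $\lambda\neq 0$.

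For the forward direction I would set $\Psi\coloneqq\MLE_{X,\vectorspace,F}$, dominant and rational by Proposition~\ref{prop:mle-rational}. To verify (a), I exploit that $\nabla\log F$ is homogeneous of degree $-1$, so $\nabla_{t^{-1}p}\log F = t\,\nabla_p\log F$; combined with the cone-invariance $T_{t^{-1}p}C_X = T_pC_X$, this shows that if $p$ is the critical point for $\bu$ then $t^{-1}p$ is the critical point for $t\bu$, and uniqueness ($\MLD = 1$) forces $\Psi(t\bu)=t^{-1}\Psi(\bu)$. For (b), the chain rule gives, for every $\bv$, that the directional derivative $D_\bv(\log F\circ\Psi)|_\bu$ equals $(\nabla_{\Psi(\bu)}\log F)(d_\bu\Psi(\bv))$; since $\Psi$ maps into $C_X$, the vector $d_\bu\Psi(\bv)$ lies in $T_{\Psi(\bu)}C_X$, so the critical point condition lets me replace $\nabla_{\Psi(\bu)}\log F$ by $\bu$ there. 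The remaining identity $\bu\bigl(d_\bu\Psi(\bv)\bigr)=-\bv(\Psi(\bu))$ I obtain by differentiating the scalar $\phi(\bu)\coloneqq\bu(\Psi(\bu))$, after first showing $\phi\equiv d$: indeed $\bu(\Psi(\bu))=(\nabla_{\Psi(\bu)}\log F)(\Psi(\bu))=d$ because $\Psi(\bu)\in T_{\Psi(\bu)}C_X$ and by Euler's relation for $F$.

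The converse runs the same computation backwards. Starting from a dominant rational $\Psi$ satisfying (a) and (b), the chain rule recasts (b) as $(\nabla_{\Psi(\bu)}\log F)(d_\bu\Psi(\bv))=-\bv(\Psi(\bu))$ for all $\bv$. The crucial preliminary is again $\phi(\bu)=\bu(\Psi(\bu))\equiv d$, but now it must be deduced from (a) and (b) rather than from the critical point condition: specializing the recast identity to $\bv=\bu$ and using the infinitesimal form $d_\bu\Psi(\bu)=-\Psi(\bu)$ of the homogeneity (a) together with $(\nabla_{\Psi(\bu)}\log F)(\Psi(\bu))=d$ yields $\phi\equiv d$. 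Differentiating $\phi\equiv d$ then gives $\bu(d_\bu\Psi(\bv))=-\bv(\Psi(\bu))$, and substituting back shows $(\nabla_{\Psi(\bu)}\log F)(w)=\bu(w)$ for every $w$ in the image of $d_\bu\Psi$. Because we work in characteristic zero and $\Psi$ is dominant onto $C_X$, generic smoothness makes $d_\bu\Psi$ surject onto $T_{\Psi(\bu)}C_X$ for general $\bu$, so $\nabla_{\Psi(\bu)}\log F-\bu$ annihilates the full tangent space and $\Psi(\bu)$ is a critical point; Proposition~\ref{rational-mle-mld-one} then finishes the argument.

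The step I expect to be the main obstacle is the bookkeeping that converts the analytic identity (b) into the tangential critical point condition: keeping the duality pairing between $\vectorspace$ and $\vectorspace^\ast$ straight through the chain rule, and, above all, isolating the constancy $\bu(\Psi(\bu))\equiv d$, which is exactly what couples the homogeneity (a) to the gradient identity (b) and makes both directions close. The surjectivity of $d_\bu\Psi$ onto $T_{\Psi(\bu)}C_X$ is the other point requiring care, since without it (b) would only pin down $\nabla\log F$ on a subspace of the tangent space rather than on all of it.
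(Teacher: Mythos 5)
Your proposal is correct and follows essentially the same route as the paper's proof: the forward direction uses homogeneity of $\nabla\log F$ plus cone-invariance of tangent spaces and uniqueness for (a), and the constancy $\bu(\Psi(\bu))\equiv\deg F$ (from Euler's relation and $\Psi(\bu)\in T_{\Psi(\bu)}C_X$) differentiated via the canonical pairing for (b); the converse recovers that same constancy from (a) and (b), then uses dominance to surject $J_{\bu}\Psi$ onto $T_{\Psi(\bu)}C_X$ and closes with Proposition~\ref{rational-mle-mld-one}. The only cosmetic difference is that in the converse you apply Euler's relation to $\Psi$ and $\log F$ separately (via the specialization $\bv=\bu$), whereas the paper applies it to the composite $F\circ\Psi$ of degree $-\deg F$; these are equivalent manipulations.
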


Before the proof, recall that Euler's homogeneous function theorem says that a homogeneous function $f$ on $\CC^n$ of degree $d$ satisfies
$\sum_{i=1}^n \frac{\partial f(x)}{\partial x_i}\cdot x_i = d\cdot f(x)$ for all $x\in \CC^n$.
Translated to a coordinate-free formulation for a vector space and its dual, the result says that if
$\varphi:\mathcal L\dashto \mathbb C$ is a homogeneous rational function of degree $d$ with $p\in \mathcal L$ then
\[
(\nabla_p\varphi)(p) = d\cdot \varphi(p). \]
Similarly, given $\psi:\mathcal L^\ast \dashto \mathbb C$ and $\bu\in \mathcal L^{\ast}$, the result says that $\bu(\nabla_{\bu}\psi) = d\cdot \psi(\bu)$.

\begin{proof}[Proof of Theorem~\ref{thm:3conditionspsi}]
Let $d\coloneqq \deg(F)$. We first show that if $\MLD = 1$ then
the map
$\Psi\coloneqq\MLE_{X, \vectorspace, F}$
satisfies (a) and (b).
This map is rational and dominant, 
by Proposition~\ref{prop:mle-rational}.
For (a), let $t\in \mathbb C\setminus \{ 0\}$, and $p\coloneqq\Psi(\bu)$.
Then
\[
\nabla_{t^{-1}p}\ell_{F,t\bu}
= \frac{\nabla_{t^{-1}p}\,F}{F(t^{-1}p)} - t\bu
= \frac{(t^{-1})^{d-1}\,\nabla_p F}{(t^{-1})^d F(p)} - t\bu
= t\,\nabla_p\ell_{F,\bu}.
\]
Since $C_X$ is a cone, $T_pC_X$ and $T_{t^{-1}p}C_X$ are equal as linear subspaces of $\vectorspace$. Thus
\[
T_{t^{-1}p}C_X =
T_p C_X \subseteq \ker(\nabla_p \ell_{F,\bu}) = \ker(\nabla_{t^{-1}p}\ell_{F,t\bu}).
\]
Since $\MLD = 1$, we must have $t^{-1}p =\Psi(t\bu)$. 
Thus, $\Psi$ is homogeneous of degree $-1$.
For (b), we have 
\[\nabla_{\bu}\log (F \circ \Psi)
= (\nabla_{\Psi(\bu)}\log F) \circ J_{\bu} \Psi
= \bu \circ J_{\bu} \Psi
\]
because $\Psi(\bu)$ is a critical point of $\ell_{F,\bu}$ and $\Im(J_u \Psi)\subseteq T_{\Psi(u)}C_X$.
Furthermore, since $C_X$ is a cone we have $\Psi(\bu)\in T_{\Psi(\bu)}C_X$, which yields
\[
0 = (\nabla_{\Psi(\bu)} \ell_{F,\bu}) (\Psi(\bu))
= \frac{(\nabla_{\Psi(\bu)}F)(\Psi(\bu))}{F(\Psi(\bu))} - \bu(\Psi(\bu))
= d - \bu(\Psi(\bu)),
\]
where the last equality follows from Euler's homogeneous function theorem.
Consider the canonical pairing $\beta : \vectorspace^\ast\times \vectorspace\to \mathbb C$. Then
$\nabla_{(\bu, p)}\beta(\dot\bu,\dot p) = \dot\bu(p) + \bu(\dot p)$ for all $\bu,\dot \bu\in \mathcal L^{\ast}$ and all $p,\dot p\in \mathcal L$ and so $\nabla_{(\bu, p)}\beta = \beta(-,p) + \beta(\bu,-)$. Define the function $\varphi\coloneqq \bu(\Psi(\bu)) :\vectorspace^\ast\to\mathbb C$. Then $\varphi = \beta \circ (\id, \Psi)$. By the above computation, $\varphi$ is a constant function equal to $d$. Thus
\begin{align*}
0 &= \nabla_{\bu}\varphi = (\nabla_{(\bu,\Psi(\bu))}\beta) \circ (\id, J_{\bu}\Psi)\\
&= (\beta(-,\Psi(\bu)) + \beta(\bu,-))
\circ(\id, J_{\bu}\Psi) = \Psi(\bu) + \bu\circ J_{\bu}\Psi.
\end{align*}
Hence $\nabla_{\bu}(\log F\circ \Psi) = \bu \circ J_{\bu}\Psi = -\Psi(\bu)$.

For the converse statement,
let $\Psi:\vectorspace^\ast\dashto C_X$ be a dominant rational map satisfying (a) and (b), and take a general $\bu\in\vectorspace^\ast$.
The map $F\circ\Psi$ is homogeneous of degree $-d$.
By Euler's homogeneous function theorem, $\bu(\nabla_{\bu}(F\circ \Psi)) = -d\cdot F(\Psi(\bu))$. By (b),
\[
\bu(\Psi(\bu)) = \bu(-\nabla_{\bu}(\log F \circ \Psi)) = - \frac{\bu(\nabla_{\bu}(F\circ \Psi))}{F(\Psi(\bu))} = d.
\]
Thus $0 = \nabla_{\bu}\varphi = \Psi(\bu) + \bu\circ J_{\bu}\Psi$ as before, so $(\nabla_{\Psi(\bu)}\log F - \bu)\circ J_{\bu}\Psi = 0$ by (b) again. Since $\Psi$ is dominant and $\bu$ is general, $J_{\bu}\Psi$ maps surjectively onto $T_{\Psi(\bu)}C_X$. Hence we have $T_{\Psi(\bu)}C_X\subseteq \ker\nabla_{\Psi(\bu)}\ell_{\bu,F}$, so $\Psi(\bu)$ is a critical point of $\ell_{\bu, F}$. In other words, $\Psi$ satisfies the hypotheses of Proposition~\ref{rational-mle-mld-one}. Thus, $\myMLD_F(X) = 1$ and $\Psi = \MLE_{X,\mathcal L, F}$.
\end{proof} 

The next proposition shows that $\MLE_{X,\mathcal L, F}$ can be recovered from its projectivization.

\begin{proposition}
Let $X\subseteq \PP(\vectorspace)$ be a projective variety and $F$ a homogeneous polynomial on $\vectorspace$ such that $\MLD = 1$. Fix $\bu\in\mathcal L^\ast$ and let $p\in C_X$ be any representative of the image of $[\bu]$ under the map
$
\PP(\MLE_{X,\mathcal L, F}): \PP(\mathcal L^\ast)\dashto X.
$
Then
\[
\MLE_{X,\mathcal L, F}(\bu) = \frac{\deg(F)}{\bu(p)}\, p.
\]
\end{proposition}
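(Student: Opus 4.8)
The plan is to reduce the entire statement to the Euler-type normalization identity $\bu(\MLE_{X,\mathcal L, F}(\bu)) = \deg(F)$ that already surfaced inside the proof of Theorem~\ref{thm:3conditionspsi}. Write $d \coloneqq \deg(F)$ and set $q \coloneqq \MLE_{X,\mathcal L, F}(\bu) \in C_X$. By construction $[q] = \PP(\MLE_{X,\mathcal L, F})([\bu])$, and $p$ is by hypothesis another representative in $C_X$ of this same point of $X$ (the projectivization is well defined precisely because $\MLE_{X,\mathcal L, F}$ is homogeneous of degree $-1$, by part~(a) of Theorem~\ref{thm:3conditionspsi}). Hence $q$ and $p$ lie on the same line through the origin of $\mathcal L$, so $q = \lambda p$ for some scalar $\lambda \in \CC$. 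The whole task is therefore to pin down $\lambda$.

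To do this I would invoke the identity $\bu(q) = d$. This is exactly the computation in the proof of Theorem~\ref{thm:3conditionspsi}: since $C_X$ is a cone we have $q = \Psi(\bu) \in T_q C_X$, and evaluating the vanishing gradient $\nabla_q \ell_{F,\bu}$ at $q$, together with Euler's homogeneous function theorem, yields $0 = d - \bu(q)$. One small point must be addressed: that identity was stated for \emph{general} $\bu$, whereas the present proposition fixes $\bu$. But $\bu \mapsto \bu(\MLE_{X,\mathcal L, F}(\bu))$ is a rational function of $\bu$ equal to the constant $d$ on a dense open set, so it equals $d$ at every $\bu$ where $\MLE_{X,\mathcal L, F}$ is defined.

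Combining the two observations gives $d = \bu(q) = \bu(\lambda p) = \lambda\,\bu(p)$. In particular $\bu(p) \neq 0$ (as $d \neq 0$ for a nonconstant $F$), so $\lambda = d/\bu(p)$ and therefore
\[
\MLE_{X,\mathcal L, F}(\bu) = q = \lambda p = \frac{\deg(F)}{\bu(p)}\, p,
\]
as claimed. I do not expect a genuine obstacle here: the content lies entirely in the normalization identity $\bu(\MLE_{X,\mathcal L, F}(\bu)) = \deg(F)$, and once that is in hand the statement is simply the remark that projectivizing forgets exactly the scalar which this identity restores. The only things to check carefully are the passage from general to fixed $\bu$ and the nonvanishing of $\bu(p)$, both of which are immediate.
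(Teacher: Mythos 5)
Your proposal is correct and follows essentially the same route as the paper: both reduce the claim to the normalization identity $\bu(\MLE_{X,\mathcal L, F}(\bu)) = \deg(F)$ established in the proof of Theorem~\ref{thm:3conditionspsi}, and then determine the scalar relating the two representatives $q$ and $p$ of the same projective point. Your added care about passing from general to fixed $\bu$ and the nonvanishing of $\bu(p)$ is a minor refinement of the paper's argument, not a different approach.
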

\begin{proof}
Let $q\coloneqq \MLE_{X,\mathcal L, F}(\bu)$. Then $\bu(q) = \deg F$, as shown in the proof of Theorem~\ref{thm:3conditionspsi}. Since $q = \lambda p$ for some $\lambda\in\mathbb C$, we have $q/\bu(q) = p/\bu(p)$, thus
\[
q = \frac{\deg(F)}{\bu(q)} q = \frac{\deg(F)}{\bu(p)}p.\qedhere
\]
\end{proof}

The next definition will be used throughout the rest of the paper.

\begin{definition}
Let $\mathcal L$ be a
finite-dimensional
$\mathbb C$-vector space and $F$ a homogeneous polynomial on $\mathcal L$. The \emph{homaloidal PDE} is the nonlinear first-order PDE:
\begin{equation}\label{pde}
\Phi = F\circ (-\nabla \log \Phi),
\quad
\Phi:\vectorspace^\ast\dashto\mathbb C\textup{ rational and homogeneous.}
\end{equation}
\end{definition}
\begin{remark}
\hfill
\begin{enumerate}
\item[(a)] Every solution $\Phi$ to~\eqref{pde} is homogeneous of degree $-\deg(F)$, since $\nabla\log\Phi$ is homogeneous of degree $-1$.
\item[(b)] If $\lambda\in\mathbb C\setminus \{ 0\}$ and $\Phi$ is a solution for $(\mathcal L, F)$, then $\lambda\Phi$ is a solution for $(\mathcal L, \lambda F)$.
\end{enumerate}
\end{remark}
We now establish the promised one-to-one correspondence between solutions to the homaloidal PDE and projective varieties of ML degree one. We consider two rational functions to be equal if they agree on a dense open set.

\begin{theorem}\label{thm:pde}
Let $\mathcal L$ be a finite-dimensional  $\CC$-vector space and $F$ a homogeneous polynomial on $\mathcal L$. There is a bijection between the projective varieties $X\subseteq \PP(\mathcal L)$ with $\MLD = 1$ and the solutions $\Phi$ to the homaloidal PDE.
The bijection sends a variety $X$ to the function $$\Phi\coloneqq F\circ \MLE_{X,\mathcal L, F},$$ and a function $\Phi$ to the variety $$X\coloneqq \PP(\overline{\Im(-\nabla\log\Phi)}).$$
A variety $X$ with $\MLD = 1$ and its corresponding solution $\Phi$ are related by
\begin{equation}\label{homaloidal-and-mle}
\MLE_{X,\mathcal L, F} = -\nabla\log\Phi.
\end{equation}
\end{theorem}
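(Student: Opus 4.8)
The plan is to build both directions of the claimed bijection directly on Theorem~\ref{thm:3conditionspsi}, which already characterizes $\MLE_{X,\mathcal L,F}$ by the homogeneity condition (a) and the gradient condition (b). The observation that drives everything is that substituting $\Psi = -\nabla\log\Phi$ into those two conditions collapses them into the single identity $\Phi = F\circ(-\nabla\log\Phi)$, which is exactly the homaloidal PDE~\eqref{pde}. So the theorem is essentially a repackaging of Theorem~\ref{thm:3conditionspsi}, and the work lies in verifying that each construction lands in the right place and that the two constructions undo each other.

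First I would treat the forward map $X\mapsto\Phi\coloneqq F\circ\MLE_{X,\mathcal L,F}$. Assuming $\MLD=1$, set $\Psi\coloneqq\MLE_{X,\mathcal L,F}$; by Theorem~\ref{thm:3conditionspsi} it is dominant, homogeneous of degree $-1$, and satisfies $\nabla_{\bu}\log(F\circ\Psi)=-\Psi(\bu)$. With $\Phi\coloneqq F\circ\Psi$, condition (b) reads $\nabla\log\Phi=-\Psi$, i.e.\ $\Psi=-\nabla\log\Phi$, which is precisely \eqref{homaloidal-and-mle}; feeding this back into $\Phi=F\circ\Psi$ gives $\Phi=F\circ(-\nabla\log\Phi)$, so $\Phi$ solves the PDE. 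Rationality of $\Phi$ and its homogeneity of degree $-\deg(F)$ are immediate from those of $\Psi$ and $F$.

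Next I would treat the backward map $\Phi\mapsto X\coloneqq\PP(\overline{\Im(-\nabla\log\Phi)})$. Put $\Psi\coloneqq-\nabla\log\Phi$. Since a solution $\Phi$ is homogeneous (of degree $-\deg(F)$), $\nabla\log\Phi$ is homogeneous of degree $-1$, giving condition (a); hence $\Im\Psi$ is scaling-invariant, so $\overline{\Im\Psi}$ is an irreducible cone, $C_X=\overline{\Im\Psi}$, and $\Psi:\mathcal L^\ast\dashto C_X$ is dominant with $X$ irreducible. The PDE gives $F\circ\Psi=\Phi$, so condition (b), namely $\nabla_{\bu}\log(F\circ\Psi)=\nabla_{\bu}\log\Phi=-\Psi(\bu)$, holds by the very definition of $\Psi$. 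The converse direction of Theorem~\ref{thm:3conditionspsi} then yields $\MLD=1$ and $\Psi=\MLE_{X,\mathcal L,F}$.

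Finally I would check that the two maps are mutually inverse. Starting from $X$, the forward map produces $\Phi$ with $-\nabla\log\Phi=\MLE_{X,\mathcal L,F}$, and since this MLE is dominant onto $C_X$ by Proposition~\ref{prop:mle-rational}, we recover $\PP(\overline{\Im(-\nabla\log\Phi)})=X$. Starting from a solution $\Phi$, the backward map produces $X$ with $\MLE_{X,\mathcal L,F}=-\nabla\log\Phi$, and then $F\circ\MLE_{X,\mathcal L,F}=F\circ(-\nabla\log\Phi)=\Phi$ by the PDE. I expect the only delicate point to be the bookkeeping around images of rational maps: confirming that $\overline{\Im(-\nabla\log\Phi)}$ is genuinely the affine cone over an irreducible projective variety, so that Theorem~\ref{thm:3conditionspsi} is applicable. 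This rests entirely on the degree $-1$ homogeneity forcing the image to be a cone and on dominance of the MLE identifying its closure with $C_X$, both of which are routine once stated carefully.
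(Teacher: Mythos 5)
Your proposal is correct and follows essentially the same route as the paper's own proof: both directions are obtained by plugging $\Psi=-\nabla\log\Phi$ (resp.\ $\Phi=F\circ\Psi$) into the two conditions of Theorem~\ref{thm:3conditionspsi}, and the mutual-inverse check is done by applying $F$ on one side and $\PP(\overline{\Im(-)})$ on the other, exactly as in the paper. The only detail the paper adds is an explicit restriction of $F\circ\MLE_{X,\mathcal L,F}$ to a dense open set where it is smooth, a technicality your treatment of rational maps up to dense open sets already absorbs.
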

\begin{proof}
If $\Phi$ is a solution of the homaloidal PDE~\eqref{pde} then $\Psi\coloneqq -\nabla\log\Phi:\vectorspace^\ast\dashto C_X$ is  rational, homogeneous of degree $-1$, and satisfies
\[
\nabla\log (F\circ \Psi) = \nabla\log(F\circ(-\nabla \log \Phi))
=\nabla \log \Phi = -\Psi,
\]
\resp{the second equality being the homaloidal PDE.}
Let $X\coloneqq \PP(\overline{\Im(\Psi)})$. We have $\MLD = 1$ and $\MLE_{X,\mathcal L,F} = \Psi$, 
by Theorem~\ref{thm:3conditionspsi}. In particular,~\eqref{homaloidal-and-mle} holds.

Conversely, let $X\subseteq \PP(\vectorspace)$ satisfy $\MLD = 1$ and $\Psi\coloneqq \MLE_{X,\vectorspace,F}$. By Theorem~\ref{thm:3conditionspsi}(a), $\Psi$ is homogenous of degree $-1$. Define $\Phi$ to be the restriction of $F\circ \Psi:\vectorspace^{\ast}\dashto \CC$ to the Zariski-dense open set
\[
\{\bu\in\vectorspace^\ast \mid \nabla_{\Psi(\bu)} F \text{ and } J_{\bu} \Psi \text{ have full rank}\}.
\]
Then $\Phi$ is a smooth rational homogeneous function. By Theorem~\ref{thm:3conditionspsi}(b),
\[
-\nabla\log\Phi = -\nabla\log (F\circ \Psi) = \Psi,
\]
thus~\eqref{homaloidal-and-mle} holds.
Applying $F$ to both sides, we get $F(-\nabla\log\Phi) = F \circ \Psi = \Phi$, hence $\Phi$ is a solution of~\eqref{pde}.

To show that the above constructions are mutually inverse, start with~\eqref{homaloidal-and-mle}. Apply $F$ to both sides for one direction and $\PP(\overline{\Im(-)})$ for the other direction.
\end{proof}

One upshot of Theorem~\ref{thm:pde} is that the maximum likelihood estimator $\MLE_{X,\mathcal L, F}$ can be expressed purely in terms of the scalar-valued function $\Phi_{X,\mathcal L, F}$, the solution to the homaloidal PDE associated to $(X,\mathcal L, F)$.
Since the map $\MLE_{X,\mathcal L, F}$ behaves well when restricting to linear subspaces, by Proposition~\ref{prop:mle-rational}, so does the function $\Phi_{X,\mathcal L,F} = F \circ \MLE_{X,\mathcal L, F}$. More precisely,
if $X\subseteq \PP(\secondvectorspace)$ for some subspace $\secondvectorspace\subseteq \vectorspace$ and $\pi:\vectorspace^\ast\to\secondvectorspace^\ast$ is the restriction to $\secondvectorspace$ then 
\[
{\Phi_{X,\vectorspace,F}} = {\Phi_{X,\secondvectorspace,F|_\secondvectorspace}} \circ \pi.
\]

\section{Linear spaces and homaloidal polynomials}
\label{sec:linear_and_homaloidal}

This section studies the case $X=\PP(\mathcal L)$. We seek the homogeneous polynomials $F$ on $\vectorspace$ such that
$
\myMLD_F(\PP(\vectorspace)) = 1.
$
This offers a change of perspective from the usual study of the ML degree: rather than, for fixed $F$, finding the linear spaces $\Lcal$ of ML degree one, we instead fix $\Lcal$ and find the polynomials $F$ with respect to which $\mathcal L$ has ML degree one. As it turns out, this class of polynomials is already present in the literature.

\begin{definition}
\label{def:homaloidal}
Let $\vectorspace$ be a
finite-dimensional
$\mathbb C$-vector space. A homogeneous polynomial $F$ on $\vectorspace$ is \emph{homaloidal} if the rational map $\nabla\log F : \vectorspace\dashto\vectorspace^\ast$ is birational. 
\end{definition}

The above definition follows Dolgachev~\cite[Section 2]{Dol00}. For more recent results on homaloidal polynomials we refer the reader to~\cite{Huh14(2)} and~\cite{SST21}. Our interest in these polynomials comes from the following fact. 

\begin{proposition}\label{lem:homaloidal-equiv}
Let $F$ be a homogeneous polynomial on $\vectorspace$. Then:
\begin{enumerate}
\item[(a)] $\myMLD_F(\PP(\vectorspace)) = \deg(\nabla \log F)$
\item[(b)] $\myMLD_F(\PP(\vectorspace)) = 1$ if and only if $F$ is homaloidal. In this case, $\MLE_{\PP(\vectorspace), \vectorspace, F}$ is the rational inverse of $\nabla \log F$.
\end{enumerate}
\end{proposition}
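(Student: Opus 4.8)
The plan is to reduce both statements to one observation: for $X=\PP(\vectorspace)$ the affine cone $C_X$ is all of $\vectorspace$, which is smooth with $T_pC_X=\vectorspace$ at every point. Consequently the critical-point condition $T_pC_X\subseteq\ker(\nabla_p\ell_{F,\bu})$ collapses to $\nabla_p\ell_{F,\bu}=0$, i.e.\ to $\nabla_p\log F=\bu$. First I would record that the critical locus of $\ell_{F,\bu}$ on $\vectorspace\setminus V(F)$ is therefore exactly the fiber $(\nabla\log F)^{-1}(\bu)$ of the gradient map $\nabla\log F:\vectorspace\dashto\vectorspace^\ast$, which is homogeneous of degree $-1$ and so well-defined as a rational map.

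For part (a), I would then invoke the standard fact that, for a rational map between irreducible varieties of equal dimension (here $\dim\vectorspace=\dim\vectorspace^\ast$) over $\CC$, the cardinality of a general fiber equals the degree of the map, with the convention that this degree is $0$ when the map fails to be dominant. Since $\myMLD_F(\PP(\vectorspace))$ is by definition the number of points in the critical locus for general $\bu$, that is, the cardinality of a general fiber of $\nabla\log F$, this yields $\myMLD_F(\PP(\vectorspace))=\deg(\nabla\log F)$ at once.

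For part (b), the equivalence is then purely formal: a dominant, generically finite rational map between irreducible varieties of the same dimension has degree $1$ if and only if it is birational, and $\nabla\log F$ being birational is exactly the definition of $F$ being homaloidal. When this holds, $\MLE_{\PP(\vectorspace),\vectorspace,F}$ sends general $\bu$ to the unique solution $p$ of $\nabla_p\log F=\bu$, which is by construction the value at $\bu$ of the rational inverse of $\nabla\log F$; hence the two maps agree. Alternatively, one can obtain the ``if'' direction and the identification of the MLE in a single step by feeding the rational inverse $\Psi$ of $\nabla\log F$ into Proposition~\ref{rational-mle-mld-one}: the identity $\nabla_{\Psi(\bu)}\log F=\bu$ shows that $\Psi(\bu)$ is a critical point of $\ell_{F,\bu}$, forcing $\myMLD_F(\PP(\vectorspace))=1$ and $\Psi=\MLE_{\PP(\vectorspace),\vectorspace,F}$.

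The content is almost entirely a matter of unwinding definitions, so I do not expect a serious obstacle. The one point requiring care is the bookkeeping around the ``degree'' of a possibly non-dominant map in part (a), so that the equality $\myMLD_F(\PP(\vectorspace))=\deg(\nabla\log F)$ remains valid (both sides being $0$) when $\nabla\log F$ is not dominant, together with the matching of the generic-fiber cardinality with the field-theoretic degree, which is where the characteristic-zero hypothesis is used.
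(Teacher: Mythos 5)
Your proposal is correct and follows essentially the same route as the paper's proof: since $C_{\PP(\vectorspace)}=\vectorspace$, the critical equations reduce to $\nabla_p\log F=\bu$, so the ML degree is the generic fiber cardinality (hence the degree) of $\nabla\log F$, and degree one is equivalent to birationality with the inverse satisfying the critical equation, hence equal to $\MLE_{\PP(\vectorspace),\vectorspace,F}$. Your explicit handling of the non-dominant (degree-zero) case and the appeal to Proposition~\ref{rational-mle-mld-one} are refinements the paper leaves implicit, but they do not change the argument.
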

\begin{proof}
The degree of \resp{the rational map} $\nabla\log F$ is the number of solutions $p\in\vectorspace$ to
\[
\frac{\nabla_p F}{F(p)} - \bu = 0
\]
for generic $\bu\in\vectorspace^\ast$, which is exactly $\myMLD_{F}(\PP(\vectorspace)).$ Thus, $\myMLD_F(\PP(\vectorspace)) = 1$ if and only if $\nabla \log F$ is birational. In this case, its inverse $\Psi$ satisfies
\[
\frac{\nabla_{\Psi(\bu)} F}{F(\Psi(\bu))} - \bu = 0,
\]
for general $\bu\in \vectorspace^\ast$, thus it equals $\MLE_{\PP(\vectorspace),\vectorspace, F}$.
\end{proof}

Part (a) of Proposition \ref{lem:homaloidal-equiv}
is used as a definition of the ML degree in \cite[Definition 2.3.1]{DMS21(2)}. Part (b) allows us to establish a close connection between homaloidal polynomials and solutions to the homaloidal PDE.

\begin{corollary}
\label{cor:homaloidal_solutions}
Let $F$ be homaloidal and $\Psi = (\nabla\log F)^{-1}$. Then $F\circ \Psi$ satisfies the homaloidal PDE.
Conversely, if $\Phi$ is a solution to the homaloidal PDE and if
$X \coloneqq \PP(\overline{\Im(-\nabla\log \Phi)})$ is equal to $\PP(\mathcal L)$,
then $F$ is homaloidal. 
\end{corollary}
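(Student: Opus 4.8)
The plan is to obtain this corollary directly by specializing Theorem~\ref{thm:pde} to the linear space $X = \PP(\vectorspace)$ and pairing it with Proposition~\ref{lem:homaloidal-equiv}(b), which already supplies the needed identification of maps in this case. Both implications are then a matter of reading off the two results in the right order.

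For the forward implication, I would start from the assumption that $F$ is homaloidal. Proposition~\ref{lem:homaloidal-equiv}(b) then gives $\myMLD_F(\PP(\vectorspace)) = 1$ together with the identification $\MLE_{\PP(\vectorspace),\vectorspace,F} = (\nabla\log F)^{-1} = \Psi$. Since the variety $X = \PP(\vectorspace)$ has ML degree one, Theorem~\ref{thm:pde} applies and tells me that the solution of the homaloidal PDE associated to $(X,\vectorspace,F)$ is $\Phi = F \circ \MLE_{\PP(\vectorspace),\vectorspace,F}$, which is exactly $F \circ \Psi$. Hence $F \circ \Psi$ satisfies the homaloidal PDE.

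For the converse, I would take a solution $\Phi$ of the homaloidal PDE and set $X := \PP(\overline{\Im(-\nabla\log\Phi)})$. By the bijection in Theorem~\ref{thm:pde}, this $X$ is a projective variety satisfying $\myMLD_F(X) = 1$. Invoking the hypothesis $X = \PP(\vectorspace)$, I obtain $\myMLD_F(\PP(\vectorspace)) = 1$, and Proposition~\ref{lem:homaloidal-equiv}(b) then yields that $F$ is homaloidal.

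I do not expect a substantial obstacle here, as the mathematical content is already carried by Theorem~\ref{thm:pde} and Proposition~\ref{lem:homaloidal-equiv}; the statement is genuinely a corollary. The only point deserving care is the bookkeeping that, in the case $X = \PP(\vectorspace)$, the map $-\nabla\log\Phi$ furnished by Theorem~\ref{thm:pde} coincides with $\MLE_{\PP(\vectorspace),\vectorspace,F}$, and hence with the rational inverse $(\nabla\log F)^{-1}$ --- precisely the identification recorded in Proposition~\ref{lem:homaloidal-equiv}(b). Ensuring these maps are matched on a common dense open set is what makes the two cited results compose cleanly.
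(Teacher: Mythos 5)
Your proposal is correct and follows exactly the paper's own argument: the paper's proof likewise derives both directions by combining Proposition~\ref{lem:homaloidal-equiv} with Theorem~\ref{thm:pde}, noting for the converse that $-\nabla\log\Phi$ is the birational inverse of $\nabla\log F$ — precisely the identification you flag in your closing remark.
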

\begin{proof}
Both statements follow from Proposition~\ref{lem:homaloidal-equiv} and Theorem~\ref{thm:pde}. For the converse statement, the birational inverse of $\nabla\log F$ is given by $-\nabla \log \Phi$.
\end{proof}

Our motivation for calling~\eqref{pde} the homaloidal PDE comes from the fact that if $\Phi$ is a solution to~\eqref{pde} for an arbitrary subvariety $X\subseteq \PP(\mathcal L)$, then $\Psi \coloneqq -\nabla\log\Phi$ satisfies $\Psi\circ (\nabla\log F) = \id$ on $C_X$, giving a birational inverse of 
$\nabla\log F$ on $C_X$. This extends Definition~\ref{def:homaloidal} to instances beyond the linear space $\vectorspace$. 

In light of Proposition~\ref{lem:homaloidal-equiv}(b), we can restate two results from the literature on homaloidal polynomials as results on linear models of ML degree one, as follows.

\begin{theorem}\label{plane-curves} 
{\rm\cite[Theorem 4]{Dol00}.}
Let $\mathcal L \isom \mathbb C^3$ and let $F$ be a homogeneous polynomial on $\mathcal L$ without repeated factors. Then $\myMLD_F(\PP(\mathcal L)) = 1$ if and only if the projective variety $V(F)\subseteq \PP(\mathcal L)$ is one of the following plane curves:
\begin{enumerate}
\item[(a)] A smooth conic.
\item[(b)] The union of three nonconcurrent lines.
\item[(c)] The union of a smooth conic and a tangent line. \qed
\end{enumerate}
\end{theorem}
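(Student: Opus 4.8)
The plan is to recast the statement via the polar map and then run a degree-versus-singularities analysis. By Proposition~\ref{lem:homaloidal-equiv}(b), the condition $\myMLD_F(\PP(\mathcal L)) = 1$ is equivalent to $F$ being homaloidal, i.e.\ to the polar map
\[
\nabla\log F = [\partial_0 F : \partial_1 F : \partial_2 F] : \PP^2 \dashrightarrow \PP^2
\]
being birational. Since $F$ has no repeated factors, the curve $C \coloneqq V(F) \subseteq \PP^2$ is reduced, and the task becomes to classify the reduced plane curves whose polar map is a Cremona transformation.

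For the ``if'' direction I would exhibit the Cremona transformation explicitly in each case using adapted coordinates. For a smooth conic, taking $F = x_0^2 + x_1^2 + x_2^2$, the map $\nabla F$ is linear and is an isomorphism precisely because the quadratic form is nondegenerate (equivalently, $V(F)$ is smooth), so $\nabla\log F$ is birational. For three nonconcurrent lines, taking $F = x_0 x_1 x_2$, the polar map is the standard quadratic involution $[x_1 x_2 : x_0 x_2 : x_0 x_1]$. For a conic with a tangent line, taking $F = x_1(x_0^2 - x_1 x_2)$, whose unique singularity is a tacnode at $[0:0:1]$, I would compute the polar map together with its rational inverse to confirm birationality. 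Each verification is a short direct computation.

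The real content is the ``only if'' direction. Here I would invoke the formula for the degree of the polar map of a reduced plane curve with isolated singularities,
\[
\deg(\nabla\log F) = (d-1)^2 - \sum_{p \in \mathrm{Sing}(C)} \mu_p(C),
\]
where $d = \deg F$ and $\mu_p$ denotes the Milnor number \cite{DP03}. Homaloidality is then equivalent to $\sum_p \mu_p = (d-1)^2 - 1$. For $d = 2$ this forces $\sum_p \mu_p = 0$, so $C$ is a smooth conic. For $d = 3$ it forces total Milnor number $3$; enumerating reduced cubics then isolates exactly the triangle (three nodes, $\mu = 1$ each) and the conic-plus-tangent (one tacnode, $\mu = 3$), while excluding the nodal cubic ($\mu = 1$), the cuspidal cubic ($\mu = 2$), the conic-plus-secant (two nodes, $\mu = 2$), and the three concurrent lines (an ordinary triple point, $\mu = 4$).

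The hard part will be ruling out every reduced curve of degree $d \geq 4$. For this I would pass to the net of first polars, a net of degree-$(d-1)$ curves: if $\nabla\log F$ is a Cremona transformation then this net is homaloidal, so its base point multiplicities $m_i$ satisfy the classical numerical characters $\sum_i m_i^2 = (d-1)^2 - 1$ and $\sum_i m_i = 3(d-2)$, together with the Noether inequality $m_1 + m_2 + m_3 > d-1$ for the three largest multiplicities. One then relates the $m_i$ to the singularities of $C$ (with $m_i = \mathrm{mult}_{p_i}(C) - 1$ at the proper base points, plus possible infinitely near contributions) and combines these with genus and reducedness constraints on $C$ to force $d \leq 3$. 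I expect this step to be delicate, precisely because the extremal nature of the problem---that $\sum_p \mu_p$ must be just one less than the maximal value $(d-1)^2$---means no single clean inequality suffices, and the tangential and infinitely near configurations must be treated individually. This is exactly the analysis of low-degree plane Cremona transformations carried out by Dolgachev, so I would ultimately appeal to \cite[Section~4]{Dol00} to complete the classification.
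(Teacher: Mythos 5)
Your proposal is correct, but note what the paper itself actually does with Theorem~\ref{plane-curves}: it gives no proof at all. The theorem is imported wholesale from \cite[Theorem 4]{Dol00} (hence the end-of-proof mark placed directly in the statement), and the paper's only original contribution here is the dictionary $\myMLD_F(\PP(\vectorspace)) = 1 \Leftrightarrow F$ homaloidal, namely Proposition~\ref{lem:homaloidal-equiv}(b). Your first step is exactly that dictionary, so there you agree with the paper; the difference is that you then supply genuine proof content for the parts that can be checked directly. Your ``if'' direction via explicit Cremona inverses is fine, and your ``only if'' argument for $d\le 3$ via the Dimca--Papadima degree formula $\deg(\nabla\log F)=(d-1)^2-\sum_p\mu_p(C)$ is correct: the Milnor-number accounting (node $1$, cusp $2$, tacnode $3$, ordinary triple point $4$) does isolate the triangle and the conic-plus-tangent among reduced cubics, and it is consistent with the Euler-characteristic identity~\eqref{eqn:DP03} that the paper quotes from the same reference \cite{DP03}. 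For $d\ge 4$ you fall back on \cite[Section 4]{Dol00}; since the paper defers the entire statement to that source, your treatment is no less complete than the paper's, just more self-contained in low degree. If you want to avoid the homaloidal-net numerology altogether, there is a cleaner route for $d\ge 4$ in the same spirit as your $d\le 3$ argument: by a theorem of du Plessis and Wall, a reduced plane curve of degree $d$ that is not a union of concurrent lines satisfies $\sum_p\mu_p(C)\le (d-1)^2-\lfloor d/2\rfloor$, so its polar degree is at least $\lfloor d/2\rfloor\ge 2$ once $d\ge 4$, while unions of concurrent lines have polar degree $0$; this is how Dimca rederives Dolgachev's classification, and it would reduce your entire ``only if'' direction to a single numerical inequality plus the enumeration of reduced conics and cubics you already carried out.
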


\begin{theorem}\label{higher-linear}
{\rm\cite[Theorem 4]{Huh14(2)}}.\label{huh-result}
Let $\mathcal L\isom \CC^m$ where $m \geq 4$ and $F$ a homogeneous polynomial on $\mathcal L$. Suppose that all singularities of the projective variety $V(F)\subseteq \PP(\mathcal L)$ are isolated. Then $\myMLD_F(\PP(\mathcal L)) = 1$ if and only if $V(F)$ is a smooth quadric. \qed
\end{theorem}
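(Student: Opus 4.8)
The plan is to translate the statement into the language of the polar (gradient) map and then invoke a degree formula for gradient maps of hypersurfaces with isolated singularities. By Proposition~\ref{lem:homaloidal-equiv}(b), $\myMLD_F(\PP(\mathcal L)) = 1$ if and only if $F$ is homaloidal, i.e.\ $\nabla\log F = \nabla F / F$ is birational (Definition~\ref{def:homaloidal}). Since $\nabla\log F$ and the homogeneous gradient map $\operatorname{grad} F = [\partial_1 F : \cdots : \partial_m F]$ have the same projectivization, this is equivalent, by the classical theory, to the condition that the polar map $\operatorname{grad} F : \PP(\mathcal L)\dashto\PP(\mathcal L^\ast)$ be a Cremona transformation, i.e.\ have topological degree $1$. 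Writing $d = \deg F$ and $n = m - 1 = \dim\PP(\mathcal L)$, the task becomes: among degree-$d$ hypersurfaces $V(F)\subseteq\PP^n$ with $n\geq 3$ and only isolated singularities, classify those whose gradient map has degree $1$.

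The forward (easy) direction is immediate: for a smooth quadric we may take $F = \sum_{i} x_i^2$, so $\operatorname{grad} F = [x_1 : \cdots : x_m]$ is the identity, which is birational. For the converse I would use the degree formula for the gradient map of a hypersurface with isolated singularities,
\[
\deg(\operatorname{grad} F) \;=\; (d-1)^{n} \;-\; \sum_{p\in\operatorname{Sing}(V(F))} c_p,
\]
where each $c_p > 0$ is a local contribution determined by the Milnor data of the singularity at $p$ (the base points of $\operatorname{grad} F$ are exactly the singular points, and each absorbs a positive part of the generic Bézout count $(d-1)^n$). When $V(F)$ is smooth the sum is empty and the degree equals $(d-1)^n$; setting this to $1$ and using $n\geq 3 > 0$ forces $d = 2$, so $V(F)$ is a smooth quadric. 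This already disposes of the smooth case.

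The substance of the theorem, and the main obstacle, is ruling out \emph{singular} $V(F)$ when $n\geq 3$. The contrast with the plane-curve case of Theorem~\ref{plane-curves} is instructive: for $n = 2$ and $d = 3$ one has $(d-1)^2 = 4$, and singular homaloidal curves exist precisely because the singular contributions can sum to $3$ (three nodes contributing $1$ each, or a single tacnode contributing $3$). For $n\geq 3$ one must show that no such fine-tuning is possible. I would split this into two parts. First, for $d = 2$ a singular quadric has corank $\geq 1$, so $\operatorname{grad} F$ is not dominant (its image lies in a hyperplane) and the degree is $0$, not $1$. Second, for $d\geq 3$ we have $(d-1)^n \geq 2^3 = 8$, and birationality would require the singular contributions to sum to exactly $(d-1)^n - 1$. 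The hard input is a lower bound on the contributions $c_p$ of an isolated hypersurface singularity once the local Milnor fiber has dimension $n-1\geq 2$; this positivity, which genuinely fails for plane curves, is what prevents the contributions from being balanced to leave the degree equal to exactly $1$. Making such a bound precise and uniform over every configuration of isolated singularities is the crux of the argument, and it is the ingredient that is special to the regime $m\geq 4$.
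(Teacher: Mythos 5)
Your opening reduction is exactly how the paper frames this result: by Proposition~\ref{lem:homaloidal-equiv}(b), $\myMLD_F(\PP(\mathcal L))=1$ if and only if $F$ is homaloidal, i.e.\ the polar map is birational. But note that the paper does \emph{not} prove the classification at all --- Theorem~\ref{higher-linear} is imported verbatim from Huh (the \qed with no argument marks it as a citation of \cite[Theorem 4]{Huh14(2)}), so everything in your proposal beyond that translation is an attempt to reprove Huh's theorem from scratch. Within that attempt, several pieces are sound: the forward direction (a smooth quadric has linear, hence birational, gradient map); the smooth case of the converse via the Bézout count $\deg(\operatorname{grad} F)=(d-1)^n$, which forces $d=2$; the observation that a singular quadric has gradient map landing in a proper linear subspace, hence polar degree $0$; and the degree formula you invoke, which for isolated singularities is the theorem of Dimca and Papadima, $\deg(\operatorname{grad} F)=(d-1)^n-\sum_{p}\mu_p$ with $\mu_p$ the Milnor number (consistent with Theorem~\ref{plane-curves}: three nodes give $4-3=1$, a tacnode gives $4-3=1$).

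The genuine gap is the step you yourself flag as ``the crux'': showing that for $n\ge 3$ and $d\ge 3$ no configuration of isolated singularities can satisfy $\sum_p \mu_p=(d-1)^n-1$. This is not a routine positivity or balancing estimate waiting to be made precise --- it \emph{is} the theorem. Precisely this statement was conjectured by Dimca and Papadima and proved by Huh in \cite{Huh14(2)}, and its proof is the substantial content of that paper: it rests on a sharp upper bound for the Milnor number of an isolated singular point of a projective hypersurface (roughly, a single singularity cannot absorb enough of $(d-1)^n$, and several singularities absorb even less per point), established with tools well beyond anything set up here or in the present paper. As written, your argument reduces the theorem to itself and then stops. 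There are two consistent ways to finish: either treat Huh's result as a black box --- in which case the entire degree-formula discussion is unnecessary and the proof is simply Proposition~\ref{lem:homaloidal-equiv}(b) plus the citation, which is what the paper does --- or actually prove the Milnor-number bound, which would be a major undertaking and cannot be waved through as ``the ingredient special to the regime $m\ge 4$.''
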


\begin{remark}
When $F$ is the Fermat quadric, Theorems~\ref{higher-linear} and~\ref{plane-curves} together with Proposition~\ref{prop:EDdegree} yield a classification of linear spaces of Euclidean distance degree one. For instance, a linear subspace of $\CC^m$, where $m\geq 4$, has ED degree one if and only if it intersects the Fermat quadric hypersurface transversally.
\end{remark}

We solve the homaloidal PDE when $\mathcal L = \CC$ or $\CC^2$, without using the assumptions on $F$ in the previous two theorems: \resp{we no longer assume that $F$ has no repeated factors, nor that all singularities of $V(F)$ are isolated}.

\begin{lemma}\label{lem:product-homaloidal}
Let $(p_i)_{i=1}^n$ be a basis of a
$\CC$-vector space $\vectorspace$ and $(\ell_i)_{i=1}^n$ the associated dual basis of $\mathcal L^\ast$. Let $F\coloneqq \prod_{i=1}^n \ell_i^{a_i}$, with $a_i\geq 0$ for all $i$. Then $F$ is homaloidal if and only if all $a_i \geq 1$. In this case, the inverse $\Psi:\vectorspace^\ast\to\vectorspace$ of $\nabla\log F$ is
\[
\Psi(\bu) = \sum_{i=1}^n \frac{a_i}{\bu(p_i)}\,p_i.
\]
\end{lemma}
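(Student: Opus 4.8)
The plan is to compute $\nabla\log F$ explicitly in the given coordinates, verify the homaloidal property by exhibiting an explicit rational inverse, and separately handle the degenerate case where some $a_i = 0$. Writing $\bu = \sum_i u_i \ell_i$ in dual coordinates and identifying a point $p = \sum_i x_i p_i$ via the coordinates $x_i = \ell_i(p)$, the polynomial becomes $F = \prod_i x_i^{a_i}$, so that $\log F = \sum_i a_i \log x_i$. The gradient is therefore the map sending $p$ to the linear form with dual coordinates $u_i = a_i / x_i$, i.e.
\[
\nabla\log F : p \mapsto \sum_{i=1}^n \frac{a_i}{\ell_i(p)}\,\ell_i.
\]

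First I would dispose of the case where some $a_j = 0$. If $a_j = 0$, then the $j$th coordinate $x_j$ does not appear in $F$, so $\nabla\log F$ has image contained in the hyperplane $\{u_j = 0\}\subseteq \vectorspace^\ast$ and cannot be dominant, let alone birational. Hence $F$ is not homaloidal, which establishes the ``only if'' direction. For the converse, assume all $a_i \geq 1$. I would then simply verify that the proposed map
\[
\Psi(\bu) = \sum_{i=1}^n \frac{a_i}{\bu(p_i)}\,p_i
\]
is a two-sided rational inverse of $\nabla\log F$ by direct substitution. Since $\bu(p_i) = u_i$ and $\ell_i(\Psi(\bu)) = a_i/u_i$, composing gives $\nabla\log F(\Psi(\bu)) = \sum_i \frac{a_i}{a_i/u_i}\ell_i = \sum_i u_i\,\ell_i = \bu$, and the reverse composition is checked the same way. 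Both maps are evidently rational (in fact defined wherever no coordinate vanishes), so $\nabla\log F$ is birational and $F$ is homaloidal, with inverse $\Psi$ as claimed.

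This argument is essentially a direct calculation once the diagonal structure is exploited, so there is no serious obstacle; the only point requiring care is confirming that $\Psi$ is genuinely the \emph{rational} inverse rather than just a set-theoretic one on the open dense locus, but this is immediate since both compositions equal the identity as rational maps. Alternatively, one could observe that this lemma is the special case $X = \PP(\vectorspace)$ of the normal-crossings situation and invoke Proposition~\ref{lem:homaloidal-equiv}(b), reading off that $\Psi = (\nabla\log F)^{-1} = \MLE_{\PP(\vectorspace),\vectorspace,F}$; but the explicit verification above is cleaner and self-contained.
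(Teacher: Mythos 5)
Your proposal is correct and follows essentially the same route as the paper: compute $\nabla_p\log F = \sum_i \frac{a_i}{\ell_i(p)}\,\ell_i$ in the dual basis, observe that a vanishing exponent $a_j=0$ forces the image into a proper subspace (so the map cannot be birational), and verify by direct substitution that $\Psi$ inverts $\nabla\log F$. The only cosmetic difference is that you check the composition $\nabla\log F\circ\Psi=\id$ while the paper checks $\Psi\circ\nabla\log F=\id$; either single verification suffices for birationality between irreducible varieties of the same dimension.
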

\begin{proof}
For $p\in \vectorspace$ we have
\[
\nabla_p\log F = \sum_{i=1}^n \frac{a_i}{\ell_i(p)}\,\ell_i,
\]
thus $\Im(\nabla\log F)\subseteq \Span(\ell_i)_{a_i\neq 0}$. So, if $\nabla\log F$ is birational then all $a_i \geq 1$. Conversely, if all $a_i \geq 1$ then the map $\Psi$ of the statement is a birational inverse of $\nabla\log F$.
Indeed, by substituting $\bu = \nabla_p\log F$ in the expression for $\Psi$ we obtain \[(\Psi \circ \nabla\log F)(p) = \sum_{i=1}^n \frac{a_i\, \ell_i(p)}{a_i} p_i = p.\qedhere\]
\end{proof}

\begin{remark}\label{bruno}
Bruno~\cite[Theorem B]{B07} showed the more general result that $\prod_{i=1}^n \ell_i^{a_i}$ 
as in Lemma~\ref{lem:product-homaloidal}, 
with $a_i \geq 1$, is homaloidal if and only if the $\ell_i$ form a basis of $\vectorspace^\ast$.
\end{remark}

\begin{proposition}
\label{prop:univariate_and_bivariate}
Let $F$ be a \resp{nonzero} homogeneous polynomial on $\vectorspace$.
\begin{enumerate}
\item[(a)] If $\vectorspace \isom \CC$, then $\myMLD_{F}(\PP(\vectorspace)) = 1$ if and only if $F$ is not constant. In this case,
\[
\MLE_{\PP(\vectorspace),\vectorspace, F}(\bu) = \frac{\deg(F)}{\bu(p)}\, p
\quad \text{ for general } \bu\in \vectorspace^\ast,
\]
where $p$ is any generator of $\vectorspace$. 
\item[(b)] If $\vectorspace\isom \CC^2$, then $\myMLD_{F}(\PP(\vectorspace)) = 1$ if and only if $F = \lambda\ell_1^{a_1}\ell_2^{a_2}$ for some $\lambda\in\CC$, $a_1,a_2 \geq 1$, and linearly independent $\ell_1,\ell_2\in\vectorspace^\ast$. In this case,
\[
\MLE_{\PP(\vectorspace),\vectorspace,F}(\bu) = \frac{a_1}{\bu(p_1)}\,p_1 + \frac{a_2}{\bu(p_2)}\,p_2
\quad \text{ for general } \bu\in \vectorspace^\ast,
\]
where $p_1,p_2\in\vectorspace$ form a basis dual to $(\ell_1,\ell_2)$. 
\end{enumerate}
\end{proposition}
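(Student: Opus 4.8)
The plan is to deduce everything from Proposition~\ref{lem:homaloidal-equiv}: we have $\myMLD_F(\PP(\vectorspace)) = 1$ exactly when $F$ is homaloidal, and in that case $\MLE_{\PP(\vectorspace),\vectorspace,F}$ is the inverse of $\nabla\log F$, which Lemma~\ref{lem:product-homaloidal} computes once $F$ is written in terms of its linear factors. For part~(a) I would use that every homogeneous polynomial on a one-dimensional space is $F = \lambda\,\ell^{d}$ for a basis vector $\ell$ of $\vectorspace^\ast$ and $d = \deg F$. If $d = 0$ then $\nabla\log F \equiv 0$ is not birational, so the ML degree is not $1$; if $d\geq 1$, the case $n=1$ of Lemma~\ref{lem:product-homaloidal} shows $F$ is homaloidal and gives the inverse $\bu\mapsto \tfrac{d}{\bu(p)}\,p$, which is the asserted formula.

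For part~(b), I would first use that a binary form splits completely, writing $F = \lambda\prod_{i=1}^{k}\ell_i^{a_i}$ with pairwise non-proportional linear forms $\ell_i\in\vectorspace^\ast$ and exponents $a_i\geq 1$. When $k=2$ the forms $\ell_1,\ell_2$ are a basis of the two-dimensional space $\vectorspace^\ast$, so Lemma~\ref{lem:product-homaloidal} applies verbatim and produces both homaloidality and the claimed MLE. The real content is to prove $\myMLD_F(\PP(\vectorspace)) = k-1$ for non-constant $F$ (constant $F$ giving ML degree $0$), which forces ML degree one to mean exactly $k = 2$. To compute this number, note that $\nabla\log F$ is homogeneous of degree $-1$ and that $[\nabla_p\log F] = [\nabla_p F]$ in $\PP(\vectorspace^\ast)$; matching scales along each line shows the general fiber of $\nabla\log F$ is in bijection with the fiber of the projectivized polar map $[\nabla F]\colon\PP^1\dashto\PP^1$. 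Hence, by Proposition~\ref{lem:homaloidal-equiv}(a) and in coordinates $\ell_i = \alpha_i x + \beta_i y$, we get $\myMLD_F(\PP(\vectorspace)) = \deg[\nabla F] = (d-1) - \deg\gcd(\partial_x F,\partial_y F)$.

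The main obstacle is evaluating this gcd. I would factor $\partial_x F = G\,P_x$ and $\partial_y F = G\,P_y$ with $G = \prod_i \ell_i^{a_i-1}$ of degree $d-k$, reducing the degree to $(k-1) - \deg\gcd(P_x,P_y)$ where $\deg P_x = \deg P_y = k-1$. To see that $\gcd(P_x,P_y)=1$, I would apply Euler's identity $x\,\partial_x F + y\,\partial_y F = dF$ and divide by $G$ to obtain $xP_x + yP_y = d\prod_i\ell_i$; thus any common root of $P_x,P_y$ must be a root of some $\ell_m$. At the unique root of $\ell_m$ every term of $P_x$ and $P_y$ vanishes except the $i=m$ term, which equals $a_m(\alpha_m,\beta_m)$ times $\prod_{j\neq m}\ell_j$ evaluated there; this product is nonzero since the $\ell_j$ are pairwise non-proportional, and $(\alpha_m,\beta_m)\neq 0$, so at least one of $P_x,P_y$ is nonzero at that point. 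Hence $P_x,P_y$ share no root, $\gcd(P_x,P_y)=1$, and $\myMLD_F(\PP(\vectorspace)) = k-1$. Combined with the $k=2$ computation above, this establishes part~(b).
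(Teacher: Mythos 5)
Your proposal is correct, and part (a) matches the paper's proof exactly (both read it off from Lemma~\ref{lem:product-homaloidal}). For part (b), however, you take a genuinely different route at the crucial step. The paper's proof is two lines: factor $F = \lambda\prod_{i=1}^{k}\ell_i^{a_i}$ into powers of distinct linear forms and invoke Remark~\ref{bruno} (Bruno's theorem that such a product with all $a_i\geq 1$ is homaloidal if and only if the $\ell_i$ form a basis of $\vectorspace^\ast$) to force $k=2$; the MLE formula then comes from Lemma~\ref{lem:product-homaloidal}, just as in your argument. You avoid the external citation entirely: using Proposition~\ref{lem:homaloidal-equiv}(a) you compute $\myMLD_F(\PP(\vectorspace))=\deg(\nabla\log F)$ directly, identify the general fiber of $\nabla\log F$ with that of the projectivized polar map $[\nabla F]\colon\PP^1\dashto\PP^1$ via the degree $-1$ homogeneity (a scaling argument the paper leaves implicit in passing between its Definition~\ref{def:homaloidal} and Dolgachev's polar-map formulation), and then show that $\gcd(\partial_x F,\partial_y F)$ is exactly $\prod_i\ell_i^{a_i-1}$ by combining Euler's identity with the pairwise non-proportionality of the $\ell_i$. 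This yields the stronger fact $\myMLD_F(\PP(\vectorspace))=k-1$ for every nonconstant binary form, a statement the paper only records in the remark following the proposition, and there by topological means (the Dimca--Papadima Euler characteristic formula~\eqref{eqn:DP03}) rather than by elementary algebra. In short: the paper's route buys brevity and rests on a classification valid in every dimension, while yours buys a self-contained argument and the exact ML degree, giving in effect an algebraic proof of the paper's subsequent remark in the bivariate case.
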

\begin{proof}
Part (a) is a direct consequence of Lemma~\ref{lem:product-homaloidal}. For (b),
decompose $F$ as a product of powers of $k$ distinct linear forms and use Remark~\ref{bruno} to conclude that $F = \lambda \ell_1^{a_1}\ell_2^{a_2}$ with $\ell_1, \ell_2$ linearly independent. The rest follows from Lemma~\ref{lem:product-homaloidal}.
\end{proof}

\resp{
A point $p$ embedded in a larger projective space $\mathbb{P}^n$ need not have ML degree one. Consider a non-constant polynomial $F$ on $\mathbb{P}^n$.
If $p\notin V(F)$, then $F$ restricted to the affine line spanned by $p$ is nonzero, and thus the ML degree of $p$ is one by Proposition~\ref{prop:univariate_and_bivariate}(a).
Otherwise, if $p \in V(F)$, then the ML degree is zero by Definition~\ref{ml-degree-general}. 
}

\begin{remark}
We comment on a possible way to extend Proposition~\ref{prop:univariate_and_bivariate} beyond the bivariate case.
Let $D_{F,\bu} \coloneqq V(F)\cup V(\bu)$ and let $\chi_{\mathrm{top}}$ denote the topological Euler characteristic. By~\cite[Theorem 1]{DP03}, 
\begin{equation}
\label{eqn:DP03}
    \myMLD_{F}(\PP(\vectorspace)) = (-1)\chi_{\mathrm{top}}(\PP(\vectorspace)\setminus D_{F,\bu})
\quad\text{ for general }\bu\in\vectorspace^\ast.
\end{equation}
When $\mathcal L \isom \CC^2$, the Euler characteristic on the right hand side of~\eqref{eqn:DP03} can be computed as follows.
Consider the factorization of $F$ as a product of powers of $k$ distinct linear forms. 
Then $k = \#V(F)$ and, for general $\bu$, the space $\PP(\vectorspace)\setminus D_{F,\bu}$ is obtained by removing $k + 1$ distinct points from a $2$-sphere. It follows that $\myMLD_{F}(\PP(\vectorspace)) = 1$ if and only if $k=2$, as seen in Proposition~\ref{prop:univariate_and_bivariate}.
For higher-dimensional $\mathcal L$,  the Euler characteristic becomes more difficult to compute.
\end{remark}

\begin{remark}
\label{rem: MLDlinearradical}
Whether $F$ is homaloidal depends only on its radical, thus we may discard repeated factors of $F$. Indeed, the right-hand side of~\eqref{eqn:DP03} only depends on the topology of $V(F)$, so we may replace $F$ with $\rad F$.
In particular, the restriction in Theorem \ref{plane-curves} that $F$ has no repeated factors is not necessary.
\end{remark}

Returning to the case $F = \det$ and $\vectorspace$ embedded into $\Sym(\RR^m)$, the upshot of the results in this section is the following.

\begin{corollary}
    \label{cor: classifyMLD1dimleq3}
    A linear space $\vectorspace \subset \Sym(\RR^m)$ of dimension $k \leq 3$ has ML degree one if and only if the variety $\PP (V(\det) \cap \vectorspace)$ is
    \begin{enumerate}
        \item[\rm{($k$=1)}] Empty. 
        \item[\rm{($k$=2)}] Two distinct points.
        \item[\rm{($k$=3)}] A smooth conic, the union of three nonconcurrent lines, or
the union of a smooth conic and a tangent line. 
    \end{enumerate}
\end{corollary}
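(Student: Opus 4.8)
The plan is to reduce the statement to the classification of homaloidal restrictions of the determinant, and then treat each dimension $k$ using the corresponding result already proved in this section.

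First I would set up the dictionary between the statistical and the coordinate-free pictures. Since $\vectorspace$ is a linear concentration model it meets the positive definite cone, so after complexification $F \coloneqq \det|_{\vectorspace}$ is a \emph{nonzero} homogeneous polynomial of degree $m$ on $\vectorspace \isom \CC^k$. By Proposition~\ref{prop:statistical}(a) and the discussion following it, the usual Gaussian ML degree of $\vectorspace$ equals $\myMLD_F(\PP(\vectorspace))$, and by Proposition~\ref{lem:homaloidal-equiv}(b) this number is $1$ exactly when $F$ is homaloidal. The observation tying this to the geometry is that, because $F$ is literally the restriction of the determinant, the projective hypersurface $V(F)\subseteq \PP(\vectorspace)$ equals $\PP(V(\det)\cap\vectorspace)$. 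Thus the corollary amounts to describing the homaloidal $F=\det|_{\vectorspace}$ through the shape of $V(F)$, dimension by dimension.

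For $k=1$ and $k=2$ I would quote Proposition~\ref{prop:univariate_and_bivariate} directly. When $k=1$, part (a) gives $\myMLD_F=1$ iff $F$ is non-constant; writing $\vectorspace = \CC K$ we have $F(tK)=t^m\det(K)$, which is non-constant precisely when $\det K\neq 0$, i.e.\ when $\PP(V(\det)\cap\vectorspace)$ is empty. When $k=2$, part (b) gives $\myMLD_F = 1$ iff $F=\lambda\,\ell_1^{a_1}\ell_2^{a_2}$ with linearly independent $\ell_1,\ell_2$ and $a_1,a_2\geq 1$; projectively this says exactly that the finite set $V(F)\subseteq \PP^1$ consists of two distinct points.

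For $k=3$ I would invoke Dolgachev's Theorem~\ref{plane-curves}, which lists the three plane curves (a)--(c) as precisely the $V(F)$ for which $F$ is homaloidal. The one point needing care is that Theorem~\ref{plane-curves} assumes $F$ has no repeated factors, whereas $\det|_{\vectorspace}$ may well acquire them upon restriction. This is exactly what Remark~\ref{rem: MLDlinearradical} removes: homaloidality, and hence the ML degree, depends only on $\rad F$, and $V(\rad F)=V(F)$ as reduced curves, so one may replace $F$ by its radical and read off the same three curves. The main obstacle is therefore not any new geometry but the bookkeeping in this dictionary --- verifying $V(F)=\PP(V(\det)\cap\vectorspace)$, reconciling reducedness with Remark~\ref{rem: MLDlinearradical}, and using the positive-definite intersection to exclude the degenerate possibility $F\equiv 0$ (which would otherwise make even the $k=1$ case vacuous). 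With these checked, each case is an immediate application of a cited result.
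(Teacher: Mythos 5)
Your proposal is correct and takes essentially the same route as the paper: restrict the determinant to $\vectorspace$, then apply Proposition~\ref{prop:univariate_and_bivariate} for $k=1,2$ and Theorem~\ref{plane-curves} for $k=3$. The paper's own proof is exactly this in two sentences; your extra bookkeeping (identifying $V(\det|_\vectorspace)$ with $\PP(V(\det)\cap\vectorspace)$, invoking Remark~\ref{rem: MLDlinearradical} to dispose of repeated factors, and ruling out $\det|_\vectorspace\equiv 0$) only makes explicit what the paper leaves implicit.
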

\begin{proof}
    For $k=1$ and $k=2$, the statement follows from  Proposition \ref{prop:univariate_and_bivariate} after restricting the determinant to $\vectorspace$. The case $k=3$ is derived similarily using Theorem \ref{plane-curves}.
\end{proof}

\resp{Corollary~\ref{cor: classifyMLD1dimleq3} implies that a linear space represented as the span of one matrix has ML degree one if and only if that matrix is non-singular. A two-dimensional linear space has ML degree one if and only if it can be spanned by two singular matrices that are (up to scaling) the only singular matrices in the space.}

Proposition~\ref{lem:homaloidal-equiv} relates Gaussian models of the simplest type, linear models, to homaloidal polynomials. Such a model has ML degree one if and only if the determinant restricted to it is a homaloidal polynomial. Varying the dimension of the ambient space $\Sym(\mathbb C^m)$, this produces every homogeneous polynomial, by Proposition~\ref{prop:statistical}. Hence classifying all linear models of ML degree one independently of the ambient space is equivalent to classifying all homaloidal polynomials. The latter is a long-standing open problem in birational geometry. For a discussion on the difficulty of this problem, we refer  to~\cite{homaloidal-hypersurfaces}. Even if we fix the ambient space $\Sym(\mathbb C^m)$, homaloidal polynomials arising as the determinant of a linear space are ill-understood~\cite{homaloidal-determinants}.

In Theorems~\ref{plane-curves} and~\ref{higher-linear}, we used the literature on homaloidal polynomials to find Gaussian models of ML degree one. 
One could continue in this fashion, turning results on homaloidal polynomials, such as those in~\cite{homaloidal-hypersurfaces,homaloidal-determinants}, into results on varieties of ML degree one. This would strengthen the bridge between birational geometry and algebraic statistics.

\section{Graphical models} 
\label{sec: graphicalmodels}
In this section, we illustrate our results by applying them to Gaussian graphical models, both directed and undirected. 
We use the homaloidal PDE~\eqref{pde} to find maximum likelihood estimators, which agree with formulae from~\cite{Lau96}. We construct a maximum likelihood estimator from products of determinants of positive definite symmetric matrices 
and compute its image, the corresponding model. Two determinant equations are relevant here: a product of determinants for a chordal undirected graph, from~\cite[Lemma 5.5]{Lau96} 
and its directed analogue, which we derive in Lemma~\ref{determinant-graphical}.
We show that these two determinant equations lead to solutions to the homaloidal PDE. We first explain how to formulate Theorems~\ref{thm:3conditionspsi} and \ref{thm:pde} for $\vectorspace = \Sym(\CC^m)$, without passing to its dual vector space, to simplify our later computations.
    
\subsection{Coordinate formulation of main results}

The bilinear trace pairing $(A,B)\mapsto \tr(AB)$ on $\Sym(\CC^m)$ restricts to an inner product on $\Sym(\RR^m)$.
Using that pairing, we identify $\Sym(\CC^m)$ with its dual vector space via $\Sym(\CC^m) \ni A \mapsto \tr(A\bullet)$.
Under this identification, the gradient $\nabla_S \Phi$ of a scalar-valued function $\Phi$ on $\Sym(\CC^m)$ is a symmetric matrix with entries \begin{equation}
     (\nabla_S \Phi)_{ij} = \frac{1}{2 -\delta_{ij}}\frac{\partial \Phi}{\partial s_{ij}},
\end{equation}
where $\delta_{ij}$ is the Kronecker delta and the $s_{ij}$ are the coordinates on $\Sym(\CC^m)$. \resp{The factors of $1/2$ in the off-zero entries come from the choice of the trace pairing.} Jacobi's formula gives $
        \nabla_S\log \det = S^{-1}
    $.

\begin{example}
    \label{ex:jacobi}
    When $m=2$ we have
    $$
    \det(S) = \det \begin{bmatrix}
        s_{11} & s_{12} \\ s_{12} & s_{22}
    \end{bmatrix} = s_{11}s_{22} - s_{12}^2 , \qquad \nabla_S \log \det = \frac{1}{\det(S)}
    \begin{bmatrix}
    s_{22} & -s_{12} \\
    -s_{12} & s_{11}
    \end{bmatrix}.
    $$
    The off-diagonal entries in the second equation are $\frac12 \frac{\partial \log \det}{\partial s_{12}} = -s_{12}$. 
\end{example}

Maximum likelihood estimation for graphical models fits the setting described in Definition~\ref{ml-degree-general} by specializing to $F = \det$ and $\bu = \mathrm{tr}(S\bullet)$. 
We reformulate Theorems~\ref{thm:3conditionspsi} and ~\ref{thm:pde} for that special case.

\begin{corollary}
    \label{cor:3conditionsSym}
    Let $X$ be an irreducible projective subvariety of $\PP (\Sym(\CC^m))$. We have $\mathrm{MLD}_{\det}(X) = 1$ if and only if there exists a dominant rational map $\Psi: \Sym(\CC^m) \dashrightarrow C_X$
such that
\begin{enumerate}
\item[(a)] $\Psi(t S) = t^{-1} \Psi(S)$ for all $t\in \CC\setminus \{ 0\}$,
\item[(b)] $\nabla_S (\log \det \Psi) = - \Psi(S)$
for general $S\in \Sym(\CC^m)$.
\end{enumerate}
The map
$\Psi$ is the maximum likelihood estimator.
\end{corollary}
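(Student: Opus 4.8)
The plan is to obtain this statement as a direct specialization of Theorem~\ref{thm:3conditionspsi} to the case $\vectorspace = \Sym(\CC^m)$ and $F = \det$. The theorem is phrased abstractly, in terms of the dual space $\vectorspace^\ast$ and the coordinate-free gradient, so the entire task is to transport its two conditions across the trace-pairing isomorphism $\Sym(\CC^m)\cong\Sym(\CC^m)^\ast$, $A\mapsto\tr(A\bullet)$, and to verify that the matrix-valued operator $\nabla_S$ declared just before the corollary is exactly the abstract gradient read through this isomorphism. Since the hypothesis that $X$ is irreducible is identical in both statements, no extra work is needed there.

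First I would set up the identification. A general linear form $\bu\in\vectorspace^\ast$ is written uniquely as $\bu=\tr(S\bullet)$ for some $S\in\Sym(\CC^m)$, so a rational map $\Psi:\vectorspace^\ast\dashrightarrow C_X$ is the same datum as a rational map $\Sym(\CC^m)\dashrightarrow C_X$, which I again call $\Psi$. Scalar multiplication is preserved under the identification ($t\bu$ corresponds to $tS$), so condition (a) of Theorem~\ref{thm:3conditionspsi}, namely $\Psi(t\bu)=t^{-1}\Psi(\bu)$, transfers verbatim to $\Psi(tS)=t^{-1}\Psi(S)$. Likewise the composite $\log(F\circ\Psi)=\log\det\Psi$ is unchanged, so the two log-likelihood functions coincide.

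The one substantive point, and the step I expect to require the only real care, is condition (b). In the theorem $\nabla_{\bu}\log(F\circ\Psi)$ is an element of $(\vectorspace^\ast)^\ast\cong\vectorspace=\Sym(\CC^m)$, and I must check that under the trace pairing it is computed by the matrix with entries $\frac{1}{2-\delta_{ij}}\,\partial/\partial s_{ij}$. Writing $G=\nabla_S\Phi$ for the coordinate-free gradient of a scalar function $\Phi$, it is characterized by $\tr(G\,\dot S)=\sum_{i\le j}\frac{\partial\Phi}{\partial s_{ij}}\dot s_{ij}$ for all symmetric $\dot S$. The subtlety is that the trace pairing expands as $\tr(G\,\dot S)=\sum_i g_{ii}\dot s_{ii}+2\sum_{i<j} g_{ij}\dot s_{ij}$, so the off-diagonal entries carry a factor of $2$; matching coefficients forces $g_{ii}=\partial\Phi/\partial s_{ii}$ and $g_{ij}=\tfrac12\,\partial\Phi/\partial s_{ij}$ for $i<j$, which is precisely the factor $\frac{1}{2-\delta_{ij}}$. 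Applying this with $\Phi=\log\det\Psi$ turns condition (b) of the theorem into $\nabla_S(\log\det\Psi)=-\Psi(S)$ exactly as stated.

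With both conditions translated, the equivalence between $\mathrm{MLD}_{\det}(X)=1$ and the existence of a $\Psi$ satisfying (a) and (b), as well as the identification of $\Psi$ with the maximum likelihood estimator, follow immediately from Theorem~\ref{thm:3conditionspsi}. I would close by recalling the observation already recorded in the text that $\MLE_{X,\Sym(\CC^m),\det}$, read through the trace pairing with $\bu=\tr(S\bullet)$, is the statistical maximum likelihood estimator, so that the map produced here is indeed the one asserted.
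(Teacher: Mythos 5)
Your proposal is correct and follows exactly the route the paper intends: the paper states this corollary without a separate proof, treating it as an immediate specialization of Theorem~\ref{thm:3conditionspsi} via the trace-pairing identification $A\mapsto\tr(A\bullet)$ and the gradient formula $(\nabla_S\Phi)_{ij}=\frac{1}{2-\delta_{ij}}\frac{\partial\Phi}{\partial s_{ij}}$ recorded just before the statement. Your careful verification of the factor $\frac{1}{2-\delta_{ij}}$ (matching $\tr(G\,\dot S)=\sum_i g_{ii}\dot s_{ii}+2\sum_{i<j}g_{ij}\dot s_{ij}$ against the coordinate expansion) is precisely the content the paper leaves implicit, so the two arguments coincide.
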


\begin{corollary}
    \label{cor:pdeSym}
    There is a bijection between the 
    projective varieties $X\subseteq \PP(\Sym(\CC^m))$ with $\mathrm{MLD}_{\det}(X) = 1$ and the
    solutions $\Phi$ to
\begin{equation}
\label{eq: symPDE}
    \Phi = \det(- \nabla_S \log \Phi),
\quad
\Phi:\Sym(\CC^m) \dashto\mathbb C\textup{ rational and homogeneous.}
\end{equation}
The bijection sends a function $\Phi$ to the variety
$\PP(\overline{\mathrm{Im} (-\nabla_S \log\Phi)})$.
\end{corollary}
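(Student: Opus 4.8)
The plan is to obtain Corollary~\ref{cor:pdeSym} as the specialization of Theorem~\ref{thm:pde} to $\vectorspace = \Sym(\CC^m)$ and $F = \det$, after translating the coordinate-free gradient $\nabla$ into the matrix gradient $\nabla_S$ via the trace pairing.

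First I would apply Theorem~\ref{thm:pde} verbatim with $(\vectorspace, F) = (\Sym(\CC^m), \det)$. This already yields a bijection between projective varieties $X \subseteq \PP(\Sym(\CC^m))$ satisfying $\myMLD_{\det}(X) = 1$ and solutions $\Phi : \Sym(\CC^m)^\ast \dashto \CC$ of the homaloidal PDE $\Phi = \det \circ (-\nabla \log \Phi)$, with the variety recovered as $X = \PP(\overline{\Im(-\nabla \log \Phi)})$ and the inverse direction sending $X$ to $\det \circ \MLE_{X,\vectorspace,\det}$. It therefore remains only to rewrite this statement intrinsically on $\Sym(\CC^m)$ rather than on its dual.

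To do so I would identify $\Sym(\CC^m)$ with $\Sym(\CC^m)^\ast$ through the trace pairing $A \mapsto \tr(A\,\bullet)$, exactly as in the proof of Proposition~\ref{prop:statistical}(a) and in the coordinate formulation preceding the corollary. Under this identification the abstract function $\Phi$ on the dual becomes a function of a symmetric matrix $S$, the target $(\vectorspace^\ast)^\ast = \vectorspace = \Sym(\CC^m)$ of the gradient is again the space of symmetric matrices, and the PDE together with the variety formula take the forms asserted in~\eqref{eq: symPDE}. Thus the entire content of the corollary reduces to checking that the coordinate-free gradient $\nabla \log \Phi$ agrees with the matrix-valued gradient $\nabla_S \log \Phi$.

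The only nontrivial point---and the step I would treat most carefully---is this gradient identification, which is \emph{not} the naive entrywise derivative. Because each off-diagonal entry $s_{ij}$ with $i \neq j$ contributes twice to $\tr(A B)$, matching the differential $\sum_{i \leq j} \frac{\partial \Phi}{\partial s_{ij}}\,\dform s_{ij}$ against the trace pairing $\tr\!\big((\nabla_S \Phi)\,\bullet\big)$ forces the correction factor and gives $(\nabla_S \Phi)_{ij} = \frac{1}{2-\delta_{ij}}\frac{\partial \Phi}{\partial s_{ij}}$, precisely the displayed gradient formula of the coordinate formulation illustrated in Example~\ref{ex:jacobi}. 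With this in hand, $\det \circ (-\nabla \log \Phi)$ becomes $\det(-\nabla_S \log \Phi)$ and $\Im(-\nabla \log \Phi)$ becomes $\Im(-\nabla_S \log \Phi)$, so Theorem~\ref{thm:pde} specializes exactly to the stated bijection.
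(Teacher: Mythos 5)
Your proposal is correct and matches the paper's own treatment: the paper presents Corollary~\ref{cor:pdeSym} as an immediate specialization of Theorem~\ref{thm:pde} to $(\vectorspace, F) = (\Sym(\CC^m), \det)$, using exactly the trace-pairing identification $A \mapsto \tr(A\,\bullet)$ and the gradient convention $(\nabla_S \Phi)_{ij} = \frac{1}{2-\delta_{ij}}\frac{\partial \Phi}{\partial s_{ij}}$ set up in the coordinate formulation just before the corollary. You also correctly isolate the one subtle point, namely that the off-diagonal factor of $\frac12$ is forced by matching the differential against the trace pairing, which is precisely what makes $\nabla_S \log \det = S^{-1}$ and the stated PDE come out right.
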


The functions $\Phi$ and $\Psi$ from Corollaries~\ref{cor:3conditionsSym} and~\ref{cor:pdeSym} relate via $\Phi = \det \Psi$.

\begin{example}
\label{ex:full_model}
Let the model be the full positive definite cone.
Its Zariski closure is, as a projective variety, $\PP(\Sym(\CC^m))$.
Recall from~\eqref{eqn:lS} that the log-likelihood $\ell_S(K)$ is, up to additive and multiplicative constants, equal to
$\log\det (K) - \mathrm{tr}(KS)$. 
Then
$$\nabla_K \log \det K = K^{-1} \quad \text{and} \quad\nabla_K \mathrm{tr}(KS) = S,$$
where the first equation comes from 
the Jacobi formula, see Example \ref{ex:jacobi}.
Hence the relation
$\nabla \ell_S(K) = 0$ is equivalent to $K^{-1} - S = 0$, leading to the solution $\hat{K} \coloneqq S^{-1}$. 
Thus,
if the sample covariance matrix $S$ has full rank, then $S^{-1}$ is the global maximum and unique critical point of the log-likelihood.  
Hence the maximum likelihood estimator is $\Psi(S)=S^{-1}$ and, 
moreover, we have $\Phi(S)= {\det(S)}^{-1}$. 

We check that $\Psi$ satisfies the conditions of Corollary~\ref{cor:3conditionsSym}, as follows.
The map $\Psi(S) = S^{-1}$ is a dominant rational map and $\Psi(tS)=(tS)^{-1} = t^{-1}S^{-1}=t^{-1}\Psi(S)$. In addition, 
\begin{equation}
\label{eqn:conditionb}
    \nabla_S (\log \det \Psi) = \nabla_S \log \det (S^{-1}) = -\nabla_S \log\det S = -S^{-1} = - \Psi(S).
\end{equation}
Next we show that $\Phi = \det \Psi$ satisfies the conditions of Corollary~\ref{cor:pdeSym}, as follows.
Taking determinants on both sides of~\eqref{eqn:conditionb} and setting $\Phi = \det \Psi$ gives
$\det(-\nabla_S (\log \Phi )) = \Phi$. 
\end{example}

\subsection{Undirected graphical models}

\begin{definition}
\label{def:Lg}
Fix an undirected 
graph $G = (V,E)$. The undirected Gaussian graphical model
$\mathcal M(G)$
consists of (positive definite) concentration matrices $K$ with $K_{ij} = 0$ if $(i,j) \notin E$. 
Its Zariski closure is the linear space
$\vectorspace_G \subseteq \Sym(\CC^V)$. \resp{Here $\CC^V$ denotes the function space that is isomorphic to the vector space over $\CC$ of finite dimension $|V|$.}
\end{definition}

\begin{example}
\label{ex:complete_graph}
    Let $G$ be the complete graph on $m$ nodes. The associated graphical model is the full cone of symmetric positive definite matrices. Its Zariski closure is $\vectorspace_G = \Sym(\CC^m)$. Hence this is Example~\ref{ex:full_model}. 
\end{example}

A fundamental class of undirected graphical models is given by \emph{chordal} graphs, also known as \emph{triangulated} or \emph{decomposable} graphs.

\begin{definition}
Let $G = (V,E)$ be an undirected graph. A \emph{weak decomposition} of $G$ is a triple $A, B, C \subseteq V$ such that
\begin{enumerate}
\item $A, B$ and $C$ are pairwise disjoint with union $V$,
\item $C$ separates $A$ from $B$; \resp{that is, every path from a vertex in $A$ to a vertex in $B$ passes through the set $C$, see~\cite[Chapter 13]{sullivant2018algebraic}.}
\item $C$ induces a complete graph.
\end{enumerate}
We denote such a decomposition by $G = A \amalg_C B$, after identifying a vertex set with its induced subgraph. The notion of a \emph{chordal} graph can be defined recursively by saying that complete graphs are chordal and that a graph $G$ is chordal if it has a weak decomposition $G = A\amalg_C B$ where $A$ and $B$ are chordal.
\end{definition}

An alternative definition of a graph being chordal is that it has no $n$-cycle for $n \geq 4$ as an induced subgraph. See \cite[Sec 2.1]{Lau96} for the equivalence of these definitions. 

Chordal graphs have ML degree one~\cite{SU10}. We give an alternative proof of this result using the homaloidal PDE. 
Given a matrix $S \in \Sym(\CC^V)$ we denote by $S_A \in \Sym(\CC^A)$ its submatrix indexed by $A \subseteq V$.
Dually, given a matrix $M \in \Sym(\CC^A)$, we pad it with zeros at positions in $V \backslash A$ to obtain a $(|V|\times |V|)$-matrix denoted by $[M]^V$.
Define $[S^{-1}_A]^V \coloneqq [(S_A)^{-1}]^V$; i.e., it is the matrix $S_A$, inverted, then padded with zeros.

The following lemma finds the determinant formula we use for finding solutions to the homaloidal PDE for chordal graphical models.

\begin{lemma}\label{determinant-undirected}
Let $A\cup B\cup C$ be a partition of $\{1,\dotsc,m\}$. Let $S$ be an $m \times m$ general positive definite symmetric matrix. Let $\hat K\coloneqq [S^{-1}_{A\cup C}]^V + [S^{-1}_{B\cup C}]^V - [S^{-1}_C]^V$. Then
\[
\det \hat K = \frac{\det(S_C)}{\det(S_{A\cup C})\det(S_{B\cup C})}.
\]
\end{lemma}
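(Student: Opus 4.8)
The plan is to reorder the index set as $A,C,B$ and exploit the graphical (conditional-independence) structure that $\hat K$ inherits from the decomposition, reducing the determinant to two Schur complements. First I would note that the three summands defining $\hat K$ are supported on the blocks indexed by $A\cup C$, by $B\cup C$, and by $C$ respectively, so the $(A,B)$ and $(B,A)$ blocks of $\hat K$ both vanish. Writing $M \coloneqq (S_{A\cup C})^{-1}$ and $M' \coloneqq (S_{B\cup C})^{-1}$ with blocks indexed by the partition, the entries of $\hat K$ touching $A$ come only from $M$, those touching $B$ only from $M'$, and the central $(C,C)$ block equals $M_{CC} + M'_{CC} - (S_C)^{-1}$.

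The crux is the classical Schur-complement identity for inverses: if $N$ is invertible with conformal blocks indexed by $A$ and $C$, then the Schur complement of the $A$-block of $N^{-1}$ satisfies $(N^{-1})_{CC} - (N^{-1})_{CA}[(N^{-1})_{AA}]^{-1}(N^{-1})_{AC} = (N_{CC})^{-1}$. Applied with $N = S_{A\cup C}$, this gives $M_{CC} - M_{CA}M_{AA}^{-1}M_{AC} = (S_C)^{-1}$. I would then expand $\det\hat K$ along its $A$-block by the block-determinant formula, obtaining $\det\hat K = \det(M_{AA})\cdot\det(W)$, where $W$ is the $(C\cup B)$-block of $\hat K$ with its $(C,C)$ corner corrected by subtracting $M_{CA}M_{AA}^{-1}M_{AC}$. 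By the identity above, this correction term combines with $M_{CC}$ and $-(S_C)^{-1}$ in the $(C,C)$ corner to vanish, so that $W$ equals $M' = (S_{B\cup C})^{-1}$ written in the $C,B$ ordering.

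It then remains to evaluate the two determinants. Since $M_{AA} = (S_A - S_{AC}S_C^{-1}S_{CA})^{-1}$, the Schur determinant formula $\det S_{A\cup C} = \det(S_C)\cdot\det(S_A - S_{AC}S_C^{-1}S_{CA})$ gives $\det(M_{AA}) = \det(S_C)/\det(S_{A\cup C})$, while $\det(W) = \det(M') = 1/\det(S_{B\cup C})$. Multiplying these yields the claimed formula. The main obstacle is bookkeeping the cancellation in the $(C,C)$ corner cleanly; note that positive definiteness of $S$ ensures that $S_C$, $S_{A\cup C}$, $S_{B\cup C}$ and all intermediate Schur complements are invertible, so every inverse above is well defined and the result is a genuine identity rather than one holding only generically.
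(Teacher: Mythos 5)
Your proof is correct, but it follows a genuinely different route from the paper. The paper's argument is probabilistic: it interprets $\hat K^{-1}$ as the covariance of a Gaussian density $f$, shows that $f$ equals a constant multiple of $g_{A\cup C}\,g_{B\cup C}/g_C$ (a ratio of marginals of the density $g$ associated to $S$), and then deduces the determinant identity from the fact that this ratio integrates to $1$ over $\RR^m$ --- in effect, the lemma is the conditional-independence factorization $A \perp B \mid C$ in disguise. Your argument instead reorders the indices as $A,C,B$, observes that the $(A,B)$ blocks of $\hat K$ vanish, and evaluates $\det \hat K$ by taking the Schur complement of $M_{AA}$, where $M=(S_{A\cup C})^{-1}$; the classical identity $M_{CC}-M_{CA}M_{AA}^{-1}M_{AC}=(S_C)^{-1}$ makes the corrected $(C,C)$ corner collapse to $M'_{CC}$, so the Schur complement is exactly $(S_{B\cup C})^{-1}$, and the two determinant factors $\det(M_{AA})=\det(S_C)/\det(S_{A\cup C})$ and $1/\det(S_{B\cup C})$ multiply to the claimed formula. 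Each approach buys something: the paper's density computation makes the statistical content transparent and is reused in the directed analogue (where the factorization according to the DAG shows $\hat K$ lies in the model), whereas your linear-algebraic computation is an identity of rational functions in the entries of $S$, so it is valid wherever the inverses exist and extends verbatim to complex symmetric matrices --- a point the paper must instead defer to the literature with the remark that the complex setting is outlined in Lauritzen.
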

\begin{proof}
Let $f$ be the probability density associated to $\hat K^{-1}$ and $g$ the density associated to $S$. Denote by $g_{A\cup C}$ the marginal density associated to $A\cup C$, and likewise define $g_{B\cup C}$ and $g_C$. We compute
\begin{align*}
f(y) &= (2\pi)^{-m/2}(\det \hat K)^{1/2}\frac{\exp(-\frac{1}{2}y^T S^{-1}_{A\cup C} y)\exp(-\frac{1}{2}y^TS^{-1}_{B\cup C}y)}{\exp(-\frac{1}{2}y^TS^{-1}_{C}y)}\\
&= \frac{(\det \hat K)^{1/2}(\det S_C^{-1})^{1/2}}{(\det S^{-1}_{A\cup C})^{1/2}(\det S^{-1}_{B\cup C})^{1/2}}\,\frac{g_{A\cup C}(y) g_{B\cup C}(y)}{g_C(y)}.
\end{align*}
It remains to show that $g_{A\cup C}g_{B\cup C}/g_C$ is a probability distribution. We integrate over $y\in \mathbb R^n$, to obtain
\[
\int_C\int_B\int_A \frac{g_{A\cup C}(y) g_{B\cup C}(y)}{g_C(y)}
= \int_C \frac{1}{g_C(y)} (\int_A g_{A\cup C}(y))(\int_B g_{B\cup C}(y))
= \int_C \frac{g_C(y)^2}{g_C(y)} = 1 .\qedhere
\]
\end{proof}

We proved Lemma~\ref{determinant-undirected} in the real positive definite setting; a proof in the complex setting is outlined in~\cite[Lemma 5.5]{Lau96}. We now describe the solution to the homaloidal PDE for chordal graphical models and the associated MLE map. In particular, we show that such models have ML degree one.

\begin{proposition}\label{prop: chordal-mle}
Let $G = (V,E)$ be a chordal graph. Define the function $\Phi_G$ recursively by
\begin{gather*}
\Phi_G(S) = \begin{cases} \det(S^{-1}) & G \text{ complete,}\\
\dfrac{\Phi_{A\cup C}(S_{A\cup C})\,\Phi_{B\cup C}(S_{B\cup C})}{\Phi_C(S_C)} & G=A\amalg_C B. \\  \end{cases} 
\end{gather*}
Then $\Phi_G$ satisfies the homaloidal PDE. The associated MLE $\Psi_G \coloneqq -\nabla_S\log\Phi_G(S)$ satisfies
\begin{gather*}
\Psi_G(S) = \begin{cases} S^{-1} & G \text{ complete,}\\
 [\Psi_{A\cup C}(S_{A\cup C})]^V + [\Psi_{B\cup C}(S_{B\cup C})]^V - [\Psi_C(S_C)]^{V} & G=A\amalg_C B.
\end{cases} 
\end{gather*}
The function $\Psi_G$ maps surjectively to $\mathcal M(G)$.
\end{proposition}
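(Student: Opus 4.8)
The plan is to induct on the number of vertices $m=|V|$, equivalently on the recursive construction of the chordal graph $G$. The base case is $G$ complete, which is precisely Example~\ref{ex:full_model}: there $\Phi_G(S)=\det(S)^{-1}$ and $\Psi_G(S)=S^{-1}$ satisfy \eqref{eq: symPDE}, and $\Psi_G$ maps the positive definite cone bijectively onto itself, which equals $\mathcal{M}(G)$. For the inductive step write $G=A\amalg_C B$; induced subgraphs of chordal graphs are chordal, so the graphs on $A\cup C$, $B\cup C$ and $C$ are chordal and have fewer vertices, and I assume the full statement for each. Set $P\coloneqq\Psi_{A\cup C}(S_{A\cup C})$, $Q\coloneqq\Psi_{B\cup C}(S_{B\cup C})$ and $R\coloneqq\Psi_C(S_C)=S_C^{-1}$. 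The recursive formula for $\Psi_G$ then follows at once from that for $\Phi_G$: taking $-\nabla_S$ of $\log\Phi_G(S)=\log\Phi_{A\cup C}(S_{A\cup C})+\log\Phi_{B\cup C}(S_{B\cup C})-\log\Phi_C(S_C)$ and using that the gradient commutes with ``restrict to a principal submatrix and pad by zeros'', i.e.\ $\nabla_S\big(h(S_W)\big)=[\nabla_{S_W}h]^V$ (the entries of $S$ outside $W$ do not occur in $S_W$, and the factor $\tfrac{1}{2-\delta_{ij}}$ matches on both sides), together with the inductive identity $\Psi_W=-\nabla\log\Phi_W$, gives $\Psi_G=[P]^V+[Q]^V-[R]^V$.

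The linchpin is an auxiliary invariant carried through the induction: for general $S$ the fitted covariance reproduces the data on every clique, $\big(\Psi_W(S_W)^{-1}\big)_D=S_D$ for every complete $D$. Applied to the subcomponents this gives $(P^{-1})_{CC}=(Q^{-1})_{CC}=S_C$, equivalently the Schur complements $P/P_{AA}=Q/Q_{BB}=S_C^{-1}=R$. From here the determinant identity is immediate: since $C$ separates $A$ from $B$, the $A$--$B$ block of $\Psi_G$ vanishes, so the Schur complement of $\Psi_G$ along the block-diagonal corner $\diag(P_{AA},Q_{BB})$ collapses the $C$-corner to $P/P_{AA}+Q/Q_{BB}-R=S_C^{-1}$, whence $\det\Psi_G=\det(P_{AA})\det(Q_{BB})\det(S_C^{-1})=\det P\cdot\det Q/\det R=\Phi_G$ after re-expanding $\det P=\det(P_{AA})\det(S_C^{-1})$ and likewise for $Q$. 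This is the natural generalization of Lemma~\ref{determinant-undirected} from single padded inverses to the recursively fitted matrices $P,Q,R$. Because $\Phi_G$ is a ratio of determinants of submatrices, it is rational and homogeneous of degree $-m$, so the identity $\det(-\nabla_S\log\Phi_G)=\Phi_G$ says exactly that $\Phi_G$ solves \eqref{eq: symPDE}; Corollary~\ref{cor:pdeSym} and Corollary~\ref{cor:3conditionsSym} then yield $\mathrm{MLD}_{\det}(X)=1$ with $\Psi_G$ the maximum likelihood estimator. The invariant itself is propagated by a block-inverse computation: using the recursive formula and $(Q^{-1})_{CC}=S_C$ one checks $\big(\Psi_G(S)^{-1}\big)_{A\cup C}=P^{-1}$, so the invariant for $D\subseteq A\cup C$ reduces to the invariant for the smaller graph $A\cup C$ (and symmetrically for $D\subseteq B\cup C$).

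It remains to prove surjectivity onto $\mathcal{M}(G)$. The recursive formula shows $\Psi_G(S)\in\mathcal{M}(G)$ (the $A$--$B$ block vanishes and each summand already carries the sparsity of its induced subgraph), and the congruence of $\Psi_G(S)$ to $\diag(P_{AA},Q_{BB},S_C^{-1})$ witnessed by the same Schur complement shows $\Psi_G(S)$ is positive definite whenever $S$ is. For the onto direction I prove self-consistency, $\Psi_G(K^{-1})=K$ for every positive definite $K\in\mathcal{M}(G)$, by induction. Writing $\Sigma\coloneqq K^{-1}$, the Schur correction in $(\Sigma_{A\cup C})^{-1}=K_{A\cup C}-K_{(A\cup C)B}K_{BB}^{-1}K_{B(A\cup C)}$ is supported on $C$ (because $K_{AB}=0$), so $(\Sigma_{A\cup C})^{-1}$ still lies in $\mathcal{M}(A\cup C)$; the inductive hypothesis then gives $\Psi_{A\cup C}(\Sigma_{A\cup C})=(\Sigma_{A\cup C})^{-1}$, and likewise for $B\cup C$ and $C$. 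Substituting these into the recursive formula and adding the three padded matrices recovers $K$ block by block, the overlapping $C$-corners combining to $K_{CC}$. Taking $S=K^{-1}$ therefore exhibits an arbitrary $K$ in the image, so $\Psi_G$ maps the positive definite cone onto $\mathcal{M}(G)$.

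I expect the separator-matching invariant to be the main obstacle: both the determinant identity and surjectivity collapse without the fact that the fitted covariance reproduces the data on the clique $C$, and establishing it requires careful block-inverse bookkeeping, tracking which Schur corrections are supported on $C$ and exploiting throughout that $C$ separates $A$ from $B$.
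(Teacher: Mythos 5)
Your proof is correct, and although it follows the same inductive skeleton as the paper (induct on a weak decomposition, base case the complete graph via Example~\ref{ex:full_model}, recursion for $\Psi_G$ obtained by differentiating $\log\Phi_G$ blockwise), its technical core is genuinely different. The paper gets $\det\Psi_G=\Phi_G$ by invoking Lemma~\ref{determinant-undirected}, whose proof is probabilistic (a factorization of Gaussian densities); but applying that lemma to the recursively fitted matrices $P=\Psi_{A\cup C}(S_{A\cup C})$, $Q=\Psi_{B\cup C}(S_{B\cup C})$, $R=S_C^{-1}$ tacitly requires that $P^{-1}$, $Q^{-1}$, $R^{-1}$ be principal submatrices of one common matrix --- which is exactly your separator-matching invariant $(P^{-1})_C=(Q^{-1})_C=S_C$, a fact the paper never states or proves. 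You instead prove the needed determinant identity directly by a Schur-complement computation and propagate the invariant through the induction (your block computation showing $(\Psi_G(S)^{-1})_{A\cup C}=P^{-1}$ is the right one; the only step left implicit is the easy observation that every clique of $G$ lies in $A\cup C$ or $B\cup C$ because $C$ separates $A$ from $B$). The same contrast appears in the surjectivity argument: the paper writes $K=[K_{A\cup C}]^V+[K_{B\cup C}]^V-[K_C]^V$ and appeals to surjectivity of the three smaller MLE maps, but one needs a \emph{single} $S$ whose three restrictions are simultaneously mapped to the three submatrices of $K$; your fixed-point identity $\Psi_G(K^{-1})=K$ produces exactly such an $S$ and closes that gap. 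What the paper's route buys is brevity and a conceptual link to the classical density factorization of~\cite{Lau96}; what your route buys is a self-contained linear-algebraic argument that makes the hidden consistency condition explicit, and, as a bonus, positive definiteness of $\Psi_G(S)$ for positive definite $S$ via the congruence to $\diag(P_{AA},Q_{BB},S_C^{-1})$, a point the paper leaves implicit.
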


\begin{proof}The recursion in the statement terminates if and only if $G$ is chordal. If $G$ is complete, the statement holds by Example~\ref{ex:full_model}. We induct on the structure of $G$. If $G$ is chordal but not necessarily complete, let $G = A\amalg_C B$ be a non-trivial weak decomposition. Graphs $A$, $B$, and $C$ are chordal, thus by the induction hypothesis $\Phi_C$, $\Phi_{A\cup C}$, and $\Phi_{B\cup C}$ satisfy the homaloidal PDE and 
the corresponding MLEs $\Psi_C$, $\Psi_{A\cup C}$, and $\Psi_{B\cup C}$ exist and map surjectively onto $\mathcal M(C)$, $\mathcal M(A\cup C)$ and $\mathcal M(B\cup C)$, respectively.
Using the definition of $\Phi_G$, we compute
\begin{align*}
\Psi_G &= -\nabla_S\log\Phi_G(S)
\\ &= -\nabla_S\log\Phi_{A\cup C}(S) - \nabla_S\log\Phi_{B\cup C}(S) + \nabla_S\log\Phi_{C}(S)\\ &= [\Psi_{A\cup C}(S_{A\cup C})]^V + [\Psi_{B\cup C}(S_{B\cup C})]^V - [\Psi_C(S_C)]^{V}.
\end{align*}
Furthermore, by Lemma~\ref{determinant-undirected},
\[\det \Psi_G = \frac{(\det \Psi_C^{-1})}{(\det \Psi_{A\cup C}^{-1})(\det\Psi_{B\cup C}^{-1})} = \frac{\Phi_{A\cup C}\Phi_{B\cup C}}{\Phi_C} = \Phi_G .\] 
Thus $\Phi_G$ satisfies the homaloidal PDE.
Finally, 
let $K\in\mathcal M(G)$. Then the submatrices $K_{A\cup C}$, $K_{B\cup C}$ and $K_{C}$ are elements of $\mathcal M(A\cup C)$, $\mathcal M(B\cup C)$ and $\mathcal M(C)$, respectively. Since $K_{ij} = 0$ whenever $i\in A$ and $j\in B$, we have $K = [K_{A\cup C}]^V + [K_{B\cup C}]^V - [K_C]^V.$ Using the induction hypothesis, we see that $K$ is in the image of $\Psi_G$. Hence, $\Psi_G$ maps surjectively onto $\mathcal M(G)$.
\end{proof}

The previous proposition illustrates how one could start with determinant equations~$\Phi$, verify the homaloidal PDE, and then compute the log-derivative to get an MLE map, whose image is the corresponding model. We explore some examples.

\begin{example}\label{ex: 1-2-3}
Let $G$ be the graph
\[\begin{tikzcd}
	1 & 2 & 3.
	\arrow[no head, from=1-1, to=1-2]
	\arrow[no head, from=1-2, to=1-3]
\end{tikzcd}\]
The graphical model $\mathcal M(G)$ consists of all concentration matrices 
\[ K = \begin{bmatrix} 
k_{11} & k_{12} & 0 \\ k_{12} & k_{22} & k_{23} \\ 0 & k_{23} & k_{33}
\end{bmatrix}.
\]
Following the recursion, we find that the solution to the homaloidal PDE for $\mathcal M(G)$ is 
\[
\Phi(S)=\frac{\det(S_2)}{\det(S_{12})\det(S_{23})}.
\]
We can compute the MLE by taking partial derivatives of $\Phi$. For example, the $(2,2)$ entry of the MLE is
\[
\mathrm{MLE(S)}_{22} = -\frac{\partial \log \Phi}{\partial s_{22}} = \frac{ (s_{11}s_{22}^2s_{33}-s_{12}^2s_{23}^2)}{ (s_{22})(s_{22}s_{33}- s_{23}^2)(s_{11}s_{22}-s_{12}^2)} .
\]
\end{example}

\begin{example}
Consider the graphical model defined by the graph
\[\begin{tikzcd}
	& 3 \\
	1 & 2 & 4 & 5.
	\arrow[no head, from=2-1, to=2-2]
	\arrow[no head, from=2-2, to=2-3]
	\arrow[no head, from=1-2, to=2-2]
	\arrow[no head, from=1-2, to=2-3]
	\arrow[no head, from=2-3, to=2-4]
\end{tikzcd}\]
Following the recursion, the solution to the homaloidal PDE for this graphical model is 
\[
\Phi(S)=\frac{\det(S_2)\det(S_4)}{\det(S_{12})\det(S_{234})\det(S_{45})}.
\]
Again, partial derivatives of $\Phi$ give the MLE. For instance,
\[
\mathrm{MLE(S)}_{23} = -\frac{1}{2}\frac{\partial \log \Phi}{\partial s_{23}} = \frac{1}{2}\frac{\partial \log \det(S_{234})}{\partial s_{23}} = \frac{ s_{24}s_{34}-s_{23}s_{44}}{ \det(S_{234})}.
\]
\end{example}

\begin{example}[The four cycle]\label{undirected-four-cycle}
Let $G$ be the non-chordal graph
\[\begin{tikzcd}
	1 & 2 \\ 3 & 4.
	\arrow[no head, from=1-1, to=1-2]
	\arrow[no head, from=1-2, to=2-2]
    \arrow[no head, from=2-1, to=2-2]
    \arrow[no head, from=1-1, to=2-1]
\end{tikzcd}\]
The naive analogue for the determinant formula for this graph has separators in the numerator and cliques in the denominator:
\[
\Phi(S)=\frac{\det(S_1)\det(S_2)\det(S_3)\det(S_4)}{\det(S_{12})\det(S_{23})\det(S_{34})\det(S_{14})}.
\]
This formula does not satisfy the homaloidal PDE. To verify this, we
enter this expression
into the computer algebra system \texttt{Macaulay2}~\cite{M2}, compute $\det(-\nabla\log \Phi)$ symbolically, and find that it is not equal to $\Phi$. See~ \url{https://mathrepo.mis.mpg.de/GaussianMLDeg1} for code to verify this and the other examples in this section.
\end{example}

\subsection{Directed graphical models}

\begin{definition}
Fix a directed acyclic graph (DAG) $G = (V,E)$. A \emph{directed Gaussian graphical model} on $G$ consists of concentration matrices $K = (I- A)^\top \Omega (I - A)$, where $\Omega$ is diagonal positive definite and $A_{ij} = 0$ unless $j \to i$ is in $E$. 
\end{definition}

We prove that directed Gaussian graphical models have ML degree one by providing a solution to the homaloidal PDE,
in Proposition \ref{prop:directed_phi}.
We begin with the determinant formula that is analogous to Lemma \ref{determinant-undirected}.
The formula makes use of the \emph{parents} of a node $v \in V$; i.e.,  $\pa(v) \coloneqq \{ i \mid i \to v ~\textnormal{in}~E \}$.

\begin{lemma}\label{determinant-graphical}
Let $G = (V,E)$ be a DAG and $S$ a general positive definite symmetric  matrix of size $|V| \times |V|$. Define
\begin{equation}
    \label{eqn:MLE_for_K_DAG}
    \hat K \coloneqq \sum_{v\in V} K_{[v|\pa(v)]}, \quad \text{where} \quad K_{[v|\pa(v)]}\coloneqq [S^{-1}_{v\cup\pa(v)}]^V - [S^{-1}_{\pa(v)}]^V.
\end{equation}
Then
\[
\det \hat K = \prod_{v\in V}\frac{\det S_{ \pa(v)}}{\det S_{v\cup\pa(v)}}.
\]
\end{lemma}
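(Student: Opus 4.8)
The plan is to mirror the probabilistic argument used for Lemma~\ref{determinant-undirected}. Write $n\coloneqq|V|$, let $g$ be the centered Gaussian density on $\RR^{n}$ with covariance $S$, and for $A\subseteq V$ let $g_A$ denote the marginal density of the coordinates indexed by $A$. Since every principal submatrix of a positive definite matrix is positive definite, each $g_A$ is well defined, with $g_A(y)=(2\pi)^{-|A|/2}(\det S_A)^{-1/2}\exp(-\tfrac12\,y^\top[S^{-1}_A]^V y)$. The key observation is that the ratio $g_{v\cup\pa(v)}/g_{\pa(v)}$ is exactly the conditional density $g(y_v\mid y_{\pa(v)})$, and that its exponent contributes the quadratic form $-\tfrac12\,y^\top\bigl([S^{-1}_{v\cup\pa(v)}]^V-[S^{-1}_{\pa(v)}]^V\bigr)y=-\tfrac12\,y^\top K_{[v|\pa(v)]}\,y$.

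First I would form the product $f(y)\coloneqq\prod_{v\in V} g_{v\cup\pa(v)}(y)/g_{\pa(v)}(y)$ and read off its two pieces. The exponents sum to $-\tfrac12\,y^\top\hat K\,y$, so $f(y)=c\,\exp(-\tfrac12\,y^\top\hat K\,y)$, where the constant $c$ gathers the normalizing factors. Bookkeeping the powers of $2\pi$ shows they combine to $(2\pi)^{-n/2}$ (each node contributing a net $(2\pi)^{-1/2}$), while the determinant prefactors give $c=(2\pi)^{-n/2}\bigl(\prod_{v\in V}\det S_{\pa(v)}/\det S_{v\cup\pa(v)}\bigr)^{1/2}$.

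The crux is to show that $f$ integrates to $1$, and this is where acyclicity enters. Fix a topological ordering $v_1,\dots,v_n$ of $V$ in which every parent precedes its child, so that the last vertex $v_n$ is a sink. Then $y_{v_n}$ occurs in $f$ only through the single factor $g_{v_n\cup\pa(v_n)}/g_{\pa(v_n)}=g(y_{v_n}\mid y_{\pa(v_n)})$, because $v_n\notin\pa(v)$ for every $v$. Integrating this conditional over $y_{v_n}$ gives $1$ and leaves $\prod_{i<n} g_{v_i\cup\pa(v_i)}/g_{\pa(v_i)}$; repeating inductively yields $\int_{\RR^{n}} f=1$.

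Finally, since $f$ is a genuine probability density whose exponent is the quadratic form of $\hat K$, it must be the centered Gaussian density with concentration matrix $\hat K$; in particular $\hat K$ is positive definite and its normalizing constant equals $(2\pi)^{-n/2}(\det\hat K)^{1/2}$. Equating this with the constant $c$ found above gives $(\det\hat K)^{1/2}=\bigl(\prod_{v\in V}\det S_{\pa(v)}/\det S_{v\cup\pa(v)}\bigr)^{1/2}$, and squaring yields the claim. I expect the normalization step to be the main obstacle: one must justify cleanly that the product of the pseudo-conditionals $g_{v\cup\pa(v)}/g_{\pa(v)}$, all built from the marginals of the single Gaussian $g$, integrates to one. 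This rests on the existence of a topological order and a telescoping integration — the directed counterpart of the separator cancellation exploited in Lemma~\ref{determinant-undirected}.
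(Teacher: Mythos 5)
Your proof is correct and follows essentially the same route as the paper's: both express the product of Gaussian conditionals $\prod_{v}g(y_v\mid y_{\pa(v)})$ in terms of the quadratic form of $\hat K$, and obtain the determinant identity by comparing normalizing constants of two probability densities. The only difference is that you prove the normalization $\int f = 1$ explicitly by telescoping integration along a topological order, where the paper simply invokes the factorization property of the DAG $G$ (and, as a small bonus, your argument also yields positive definiteness of $\hat K$ rather than assuming it).
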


\begin{proof}
The density function associated to $\hat K^{-1}$ is
\begin{align*}
f(y) &= (2\pi)^{-m/2}(\det \hat K)^{1/2}\exp\left(-\frac{1}{2}y^T \hat K y\right) \\
&= (2\pi)^{-m/2}(\det \hat K)^{1/2}\prod_{v\in V}\exp\left(-\frac{1}{2}y^T K_{[v|\pa(v)]} y\right).
\end{align*}
We obtain a second description of $f(y)$. We have
\[
f(y_v|y_{\pa(v)})
= \frac{f(y_{v\cup \pa(v)})}{f(y_{\pa(v)})}
= (2\pi)^{-1/2}\frac{\det(S_{\pa(v)})^{1/2}}{\det(S_{v\cup\pa(v)})^{1/2}} \exp\left(-\frac{1}{2}y^TK_{[v|\pa(v)]}y\right).
\]
Thus,
\[
f(y) =
(\det \hat K)^{1/2}\bigg(\prod_{v\in V}\frac{\det S_{v\cup \pa(v)} }{\det{S_{\pa(v)}}}\bigg)^{1/2} \bigg(\prod_{v\in V} f(y_v|y_{\pa(v)})\bigg).
\]
According to the factorization property of $G$, the rightmost factor of the above expression is a probability distribution. Integrating over $y$ we obtain the desired result.
\end{proof}

While we prove Lemma~\ref{determinant-graphical} in the real positive definite setting, a proof can also be found in the general complex setting, by following the structural equations of the graph~$G$ with complex variables.

\begin{proposition}
\label{prop:directed_phi}
Let $G = (V,E)$ be a DAG and $\mathcal M$ its associated Gaussian graphical model. The function
\begin{equation}
\label{eqn:dag_det}
    \Phi(S)\coloneqq \prod_{v\in V}\frac{\det(S_{\pa(v)})}{\det(S_{v\cup \pa(v)})}
\end{equation}
satisfies the homaloidal PDE. The corresponding MLE $\Psi$ sends $S$ to $\hat K$ and maps surjectively to $\mathcal M$.
In particular, every DAG model has ML degree one.
\end{proposition}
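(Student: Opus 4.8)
The plan is to follow the template of Proposition~\ref{prop: chordal-mle}: first verify that $\Phi$ from~\eqref{eqn:dag_det} solves the symmetric homaloidal PDE~\eqref{eq: symPDE}, then read off the MLE and the model from Corollary~\ref{cor:pdeSym}, and finally identify the image of the MLE with $\mathcal{M}$.

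First I would compute the gradient $-\nabla_S\log\Phi$. Writing $\log\Phi = \sum_{v\in V}\big(\log\det S_{\pa(v)} - \log\det S_{v\cup\pa(v)}\big)$ and using Jacobi's formula $\nabla_S\log\det = S^{-1}$ together with the fact that the gradient of $S\mapsto\log\det S_A$ is the padded inverse $[S^{-1}_A]^V$, I obtain
\[
-\nabla_S\log\Phi = \sum_{v\in V}\big([S^{-1}_{v\cup\pa(v)}]^V - [S^{-1}_{\pa(v)}]^V\big) = \sum_{v\in V} K_{[v|\pa(v)]} = \hat K,
\]
which is exactly the matrix of~\eqref{eqn:MLE_for_K_DAG}. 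Applying Lemma~\ref{determinant-graphical} then gives $\det(-\nabla_S\log\Phi) = \det\hat K = \prod_{v}\det S_{\pa(v)}/\det S_{v\cup\pa(v)} = \Phi$. Since each factor of $\Phi$ is homogeneous of degree $-1$, the function $\Phi$ is rational and homogeneous, hence a genuine solution of~\eqref{eq: symPDE}. By Corollary~\ref{cor:pdeSym}, the associated variety $X\coloneqq\PP(\overline{\Im(-\nabla_S\log\Phi)})=\PP(\overline{\Im\Psi})$ satisfies $\mathrm{MLD}_{\det}(X)=1$, and $\Psi=-\nabla_S\log\Phi$ is its MLE, sending $S$ to $\hat K$.

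It remains to identify $\Im\Psi$ with $\mathcal{M}$, which will also show that $\mathcal{M}$ itself has ML degree one, since then $X=\PP(\overline{\mathcal{M}})$. For the inclusion $\Im\Psi\subseteq\mathcal{M}$, I would reuse the conditional-density computation inside the proof of Lemma~\ref{determinant-graphical}: the summand $K_{[v|\pa(v)]}$ is precisely the (padded) quadratic form of the Gaussian conditional $y_v\mid y_{\pa(v)}$, a linear regression of $y_v$ onto its parents with coefficients $\hat A_{vj}$ ($j\in\pa(v)$) and positive conditional variance determined by $S_{v\cup\pa(v)}$. Summing these contributions yields $\hat K=(I-\hat A)^\top\hat\Omega(I-\hat A)$ with $\hat A_{vj}=0$ unless $j\to v$ and $\hat\Omega\succ 0$ diagonal, so $\hat K\in\mathcal{M}$. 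Conversely, given $K\in\mathcal{M}$, I would set $S=K^{-1}$: the DAG Markov factorization of the corresponding Gaussian means its conditionals $y_v\mid y_{\pa(v)}$ are exactly the regressions encoded by $A$ and $\Omega$, so summing their precision contributions recovers $\hat K=K$, i.e.\ $\Psi(K^{-1})=K$. Hence $\mathcal{M}\subseteq\Im\Psi$, and $\Psi$ surjects onto $\mathcal{M}$.

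The routine part is Step~1, a direct gradient computation combined with Lemma~\ref{determinant-graphical}. The main obstacle is surjectivity onto \emph{exactly} $\mathcal{M}$: one must translate the purely algebraic summands $K_{[v|\pa(v)]}$ into the statistical parametrization $(I-A)^\top\Omega(I-A)$, recognizing each $K_{[v|\pa(v)]}$ as the precision contribution of a valid Gaussian regression with the prescribed sparsity, and invoking the DAG factorization to see that $S=K^{-1}$ reconstructs $K$. This is where the combinatorics of the parent structure and the Markov property of $G$ enter; everything else is formal once the PDE is verified.
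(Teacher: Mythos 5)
Your proposal is correct and follows essentially the same route as the paper's proof: compute $-\nabla_S\log\Phi = \hat K$ via Jacobi's formula, invoke Lemma~\ref{determinant-graphical} to verify the PDE, and use the factorization of the density associated to $\hat K$ to place it in $\mathcal M$. The one point where you go beyond the paper is the converse inclusion $\mathcal M\subseteq\Im\Psi$: the paper only remarks that $\hat K\in\mathcal M$ and leaves surjectivity implicit, whereas you supply the standard argument that $\Psi(K^{-1})=K$ for $K\in\mathcal M$ via the DAG Markov factorization, which is a worthwhile (and correct) addition.
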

\begin{proof}
We have
\begin{align*}
-\nabla_S\log\Phi
&= \sum_{v\in V}\big(\nabla_S\log\det(S_{v\cup\pa(v)}) - \nabla_S\log\det(S_{[\pa(v)]})\big)\\
&= \sum_{v\in V}\big([S_{v\cup \pa(v)}^{-1}]^V - [S_{\pa(v)}^{-1}]^V\big) = \hat K
\end{align*}
and $\det(-\nabla\log\Phi) = \Phi$, by Lemma~\ref{determinant-graphical}. In the proof of the lemma, we also see that $\hat K \in \mathcal M$ since the distribution associated to $\hat K$ factorizes according to $G$.
\end{proof}

\begin{example}
\label{ex: unshieldedcollider}
Fix the DAG $1\rightarrow 3 \leftarrow 2$. 
Consider the directed Gaussian graphical model on $G$.
The model consists of concentration matrices $K$ that satisfy 
$$ k_{13} k_{23} - k_{12} k_{33} = 0.$$
Let $S$ be the sample covariance matrix. 
The MLE given $S$ is 
$$\Psi(S) = \frac{1}{\det(S)} \begin{bmatrix} 
\frac{s_{11}^2s_{22}^2s_{33}+\dots-2s_{11}s_{12}^2s_{22}s_{33}}{s_{11}(s_{11}s_{22}-s_{12}^2)} & \frac{(s_{12}s_{13}-s_{11}s_{23} )(s_{12}s_{23}-s_{13}s_{22})}{s_{11}s_{22}-s_{12}^2}  & s_{12}s_{23} - s_{13}s_{22} \\
\frac{(s_{12}s_{13}-s_{11}s_{23} )(s_{12}s_{23}-s_{13}s_{22})}{s_{11}s_{22}-s_{12}^2}     & \frac{s_{11}^2s_{22}^2s_{33}+\dots-2s_{11}s_{12}^2s_{22}s_{33}}{s_{22}(s_{11}s_{22}-s_{12}^2)}  & s_{12}s_{13} - s_{11}s_{23}  \\
s_{12}s_{23} - s_{13}s_{22}  &  s_{12}s_{13} - s_{11}s_{23}   & s_{11}s_{22}-s_{12}^2 \\
\end{bmatrix}, $$
provided that $\Delta\coloneqq s_{11}s_{22}(s_{11}s_{22}-s_{12}^2)\det(S)$ does not vanish.
Here $$\Phi = \det \Psi = \frac{s_{11}s_{22}-s_{12}^2}{s_{11}s_{22}\det(S)} .$$   
The formula for the scalar valued function $\Phi$ is simpler than the one for the MLE $\Psi$.
\end{example}

\begin{example}
\label{ex:directed_bigger}
Let $G$ be the graph
\[\begin{tikzcd}
	1 & 3 \\
	2 & 4 & 5.
	\arrow[from=1-1, to=1-2]
	\arrow[from=1-1, to=2-1]
	\arrow[from=1-2, to=2-2]
	\arrow[from=2-1, to=2-2]
	\arrow[from=1-2, to=2-3]
	\arrow[from=2-2, to=2-3]
\end{tikzcd}\]
The solution to the homaloidal PDE for $G$ is
\[
\Phi(S) = \frac{\det(S_{1})\det(S_{23})\det(S_{34})}
{\det(S_{12})\det(S_{13})\det(S_{234})\det(S_{345})}.
\]
Computing partial derivatives of $\Phi$ gives the MLE, $\hat{K}$. For example, 
\[
\mathrm{MLE(S)}_{23} = -\frac{1}{2}\frac{\partial \log \Phi}{\partial s_{23}} = \frac{1}{2}\frac{\partial \log \nicefrac{\det(S_{234})}{\det(S_{23})}}{\partial s_{23}} = -\frac{(s_{33}s_{24}-s_{23}s_{34})(s_{23}s_{24}-s_{22}s_{34})}{\det(S_{234})\det(S_{23})}.      
\]
The expression for $\Phi$ may not seem simpler than the one for $\hat{K}$ in~\eqref{eqn:MLE_for_K_DAG} which, for this example, is
\[
\hat K = ([S^{-1}_{12}]^V + [S^{-1}_{13}]^V + [S^{-1}_{234}]^V + [S^{-1}_{{345}}]^V)
- ([S^{-1}_{1}]^V + [S^{-1}_{23}]^V + [S^{-1}_{34}]^V).
\]
However, the former expression has smaller complexity (and is faster to compute) than the latter. If $a_v$ is the size of the matrix $S_{\pa(v)}$, then the complexity of the expression $\Phi$ in terms of $S$ is $O(a_1^2 + \dotsb + a_m^2)$, whereas the complexity of $\hat K$ is $O(a_1^3 + \dotsb + a_m^3)$. 
\end{example}

\begin{example}
Let $G$ be the directed non-acyclic graph
\[\begin{tikzcd}
	1 & 2 \\ 3 & 4.
	\arrow[from=1-1, to=1-2]
	\arrow[from=1-2, to=2-2]
    \arrow[from=2-2, to=2-1]
    \arrow[from=2-1, to=1-1]
\end{tikzcd}\]
Applying the formula~\eqref{eqn:dag_det} for $\Phi$ to this example gives an equation that does not satisfy the homaloidal PDE: it is the same as Example~\ref{undirected-four-cycle}.
\end{example}

\section{Solutions to the homaloidal PDE}
\label{sec:solutions} 

We saw instances where linear ML degree one varieties can be characterized, via connections to homaloidal polynomials, in Section~\ref{sec:linear_and_homaloidal}. 
In this section, we
present steps towards parametrizing the solutions to the homaloidal PDE when $X$ is not necessarily linear.
We study factorization properties of solutions to the homaloidal PDE in Proposition~\ref{prop:factors}, inspired by the proof of \cite[Lemma 16]{Huh14(1)}.
We solve the PDE when the polynomial $F$ is linear in Theorem~\ref{thm:linear_case}.

\begin{proposition}
\label{prop:factors} 
Fix a basis $u_1,\ldots,u_{\dim \vectorspace}$ for $\vectorspace^\ast$. Let $\Psi$ be the MLE map of some variety with ML degree one. Consider prime decompositions 
of the coordinates of $\Psi$, say
$\psi_i = c_i\prod_{f \in \mathcal{F}} f^{\alpha_{i,f}}$
where  $c_i \in \CC$ and
 $\mathcal{F} $ is the set of all prime factors that appear in some coordinate.
The prime decompositions satisfy:
     \begin{enumerate}[label=(\alph*)]
         \item The factors in the denominators are linear; i.e., $\alpha_{i,f} \geq -1$ for all $f\in\mathcal{F}$ and all $i\in\{1,\ldots, \dim \vectorspace \}$. 
         \item If $f$ appears in some denominator, then $\frac{\partial f}{\partial u_j} \neq 0$ if and only if $\alpha_{j,f} = -1$. That is, $f$ only depends on the $u_j$ for which $f$ is a factor of the denominator of $\psi_j$.
     \end{enumerate}
 \end{proposition}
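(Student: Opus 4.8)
The plan is to exploit Theorem~\ref{thm:pde}, which identifies the MLE map with $\Psi = -\nabla\log\Phi$ for the associated solution $\Phi$ to the homaloidal PDE, and then to read off the factorization of each coordinate $\psi_i$ directly from a prime factorization of the scalar function $\Phi$. Since $\CC[u_1,\dots,u_{\dim\vectorspace}]$ is a UFD, I would write $\Phi = c\prod_{g\in\mathcal{G}} g^{b_g}$, where $c\in\CC\setminus\{0\}$, the $g$ are distinct primes, and the exponents $b_g$ are nonzero integers (possibly negative). Taking logarithmic derivatives gives the basic formula
\[
\psi_i = -\,\frac{\partial}{\partial u_i}\log\Phi = -\sum_{g\in\mathcal{G}} b_g\,\frac{\partial g/\partial u_i}{g},
\]
from which the entire proposition will follow by a pole analysis.

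The main step is to compute the denominator of $\psi_i$ in lowest terms. Bringing the right-hand side over the common denominator $\prod_{g\in\mathcal{G}} g$, the numerator becomes $-\sum_{g} b_g\,(\partial g/\partial u_i)\prod_{h\neq g} h$. For a fixed prime $f\in\mathcal{G}$, every summand with $g\neq f$ is divisible by $f$, while the summand with $g=f$ equals $-b_f\,(\partial f/\partial u_i)\prod_{h\neq f} h$. Because $f$ is irreducible and $\deg(\partial f/\partial u_i)<\deg f$, we have $f\nmid \partial f/\partial u_i$ whenever $\partial f/\partial u_i\neq 0$, and $f\nmid \prod_{h\neq f} h$ since the primes of $\mathcal{G}$ are pairwise non-associate. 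Hence $f$ divides the numerator if and only if $b_f=0$ or $\partial f/\partial u_i=0$. As every $f\in\mathcal{G}$ has $b_f\neq0$, this shows that, after reduction, $f$ occurs in the denominator of $\psi_i$ with multiplicity exactly one when $\partial f/\partial u_i\neq0$, and does not occur in the denominator at all when $\partial f/\partial u_i=0$.

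This single computation yields both claims. For part (a), no prime occurs in a denominator with multiplicity below $-1$, so $\alpha_{i,f}\geq -1$; here I would also note that primes of $\Psi$ that do not divide $\Phi$ can appear only in numerators, so the bound is automatic for them. For part (b), if $f$ appears in some denominator then $f$ is a prime of $\Phi$ with $b_f\neq0$, and the computation gives $\alpha_{j,f}=-1$ exactly when $\partial f/\partial u_j\neq0$, which is the asserted equivalence. The one point requiring care — and the crux of the argument — is to confirm that no cancellation or reinforcement among the summands disturbs the simple-pole structure; this is precisely where the irreducibility of each $f$ (ruling out $f\mid \partial f/\partial u_i$) and the distinctness of the primes of $\mathcal{G}$ enter, mirroring the factorization bookkeeping in the proof of \cite[Lemma~16]{Huh14(1)}.
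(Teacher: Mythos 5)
Your proof is correct and follows essentially the same route as the paper's: both invoke Theorem~\ref{thm:pde} to write $\Psi = -\nabla\log\Phi$, take a prime decomposition of $\Phi$, and analyze the logarithmic derivative's denominator, using that distinct primes cannot divide each other and that an irreducible $f$ cannot divide its own nonzero partial derivative to rule out cancellation. Your explicit degree argument for $f\nmid \partial f/\partial u_j$ and your remark that primes not dividing $\Phi$ only appear in numerators are just slightly more detailed versions of steps the paper states in passing.
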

 \begin{proof}
     There is a rational function $\Phi$ such that $\Psi = -\nabla \log \Phi$,
     by Theorem \ref{thm:pde}.
     We decompose $\Phi = \prod_{k=1}^N g_k^{\beta_k}$ into prime factors for some integer $N$. Fixing a basis, define $\mathcal{G}_i \coloneqq \{k: \, \frac{\partial g_k}{\partial u_i} \neq 0  \}$, the set of indices $k$ such that $g_k$ depends on $u_i$. Then
     \begin{align*}
         \psi_i = -\frac{\partial \log \Phi}{\partial u_i} = -\sum_{k \in \mathcal{G}_i} \frac{\beta_k}{g_k} \frac{\partial g_k}{\partial u_i} =
         -\frac{  \sum_{k\in \mathcal{G}_i} (\prod_{j \in \mathcal{G}_i \setminus \{k\} } g_j) \beta_k \frac{\partial g_k}{\partial u_i}   }{\prod_{k \in \mathcal{G}_i} g_k}.
     \end{align*}
     Then (a) follows, because no exponent in the denominator of any $\psi_i$ can be greater than one after simplifying the expression above. We next show that there are no common prime factors of the numerator and denominator of the derived expression for $\psi_i$. For each index $l \in \mathcal{G}_i$ we may write
     \begin{align*}
            \sum_{k\in \mathcal{G}_i} \left( \prod_{j \in \mathcal{G}_i \setminus \{k\}} g_j\right) \beta_k \frac{\partial g_k}{\partial u_i} = \left(\prod_{j \in \mathcal{G}_i\setminus\{l\} } g_j\right) b_l \frac{\partial g_l}{\partial u_i} + g_l\cdot\left(\sum_{k\in \mathcal{G}_i\setminus\{l\}} \left(\prod_{j \in \mathcal{G}_i\setminus\{k,l\} } g_j\right) \beta_k \frac{\partial g_k}{\partial u_i}\right).
     \end{align*}
     From this we deduce that 
     \begin{align*}
        g_{l} \Big| \sum_{k\in \mathcal{G}_i} \left(\prod_{j \in \mathcal{G}_i \setminus \{k\} } g_j\right) \beta_k \frac{\partial g_k}{\partial u_i} \quad\text{if and only if}\quad g_{l} \Big| \left(\prod_{j \in \mathcal{G}_i\setminus\{l\} } g_j\right) b_l \frac{\partial g_l}{\partial u_i}.
     \end{align*}
     However, $g_{l} \nmid (\prod_{j \in \mathcal{G}_i\setminus\{l\} } g_j) b_l \frac{\partial g_l}{\partial u_i}$ because all $g_k$ are distinct primes and a polynomial cannot be a factor of its own derivative. Thus the expression for $\psi_i$ has no common prime factor in the numerator and denominator. From this we conclude that $\mathcal{G}_i \subset \mathcal{F}$ and that these factors $g_k$ are the only ones that occur in the denominator of $\psi_i$. Moreover, $g_k$ will appear exactly in the denominators of $\psi_i$ with $k \in \mathcal{G}_i$. 
 \end{proof}

We can observe the described properties of the map $\Psi$ in all examples of ML degree one varieties presented throughout this paper. We conclude this section by solving the PDE when the polynomial $F$ is linear.
If $Y\subseteq \PP^n$ is a hypersurface and $p\in Y$ a singular point with multiplicity $\deg(Y)-1$, we define a retraction $r_p:\PP^n\dashto Y$ that sends a point $q$ to the unique point in $Y$ on the line through $p$ and $q$. The map $r_p$ is rational.

\begin{theorem}
\label{thm:linear_case}
Let $\vectorspace$ be a
finite-dimensional
$\CC$-vector space and $\ell \in\vectorspace^\ast$.
Let $\Phi = \nicefrac{f}{g}$ be a rational function on $\mathcal L^\ast$, homogeneous of degree $-1$. Then $\Phi$ satisfies the homaloidal PDE with respect to $\ell$ if and only if $\ell$ is a point of $V(g)$ of multiplicity $\deg g - 1$ and $f = \ell(\nabla_{\bu} g)$. 

In this case, the map $\Psi \coloneqq -\nabla\log \Phi$ satisfies $[\Psi(\bu)] = [\nabla_{r_\ell(\bu)}g]$ for general $\bu\in\mathcal L^\ast$, where $r_\ell$ is the retraction to $V(g)$. The variety of ML degree one, parametrized by $\Psi$, is the dual variety $\PP(V(g))^\vee \coloneqq \PP(\overline{\Im(\nabla g|_{V(g)}}).$
\end{theorem}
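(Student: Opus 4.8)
The plan is to turn the homaloidal PDE into a single polynomial identity, analyze that identity via directional derivatives, and then read off the geometry. Writing $\Phi = f/g$ in lowest terms and putting $D \coloneqq \deg g$ (so $\deg f = D-1$, since $\Phi$ is homogeneous of degree $-1$), I would first record that $\Psi \coloneqq -\nabla\log\Phi = \nabla g/g - \nabla f/f$. Because $F = \ell$ is linear, the PDE $\Phi = F\circ\Psi$ becomes $\Phi = \ell(\Psi)$; clearing the denominator $fg$ converts this into the polynomial identity
\begin{equation*}
f\cdot\ell(\nabla g) - g\cdot\ell(\nabla f) = f^2. \tag{$\star$}
\end{equation*}
The observation that makes $(\star)$ tractable is that $\ell(\nabla g)$ is exactly the directional derivative $D_\ell g$ of $g$ along $\ell\in\vectorspace^\ast$, and similarly $\ell(\nabla f) = D_\ell f$: indeed $\nabla_\bu g$ lies in $(\vectorspace^\ast)^\ast\cong\vectorspace$ and pairs with $\ell$ to give the $\ell$-directional derivative. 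This recasts $(\star)$ purely in terms of the operator $D_\ell$.

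The heart of the argument is a dictionary between the multiplicity of $[\ell]$ on $V(g)$ and the vanishing of iterated directional derivatives. Expanding $g(\ell + s\bu)$ as a polynomial in $s$ and matching with the polarization of $g$, the coefficient of $s^k$ is a nonzero multiple of $D_\ell^{D-k}g(\bu)$; hence $[\ell]$ has multiplicity exactly $D-1$ on $V(g)$ if and only if $D_\ell^{j}g\equiv 0$ for all $j\ge 2$ while $D_\ell g\not\equiv 0$. The decisive simplification is that this a priori unbounded list collapses to one equation: since $D_\ell^{j}g = D_\ell^{j-2}(D_\ell^2 g)$, the single condition $D_\ell^2 g\equiv 0$ forces all higher derivatives to vanish. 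Granting this dictionary, both directions are short. For the backward implication, assuming $f = D_\ell g$ and multiplicity $D-1$ gives $\ell(\nabla f) = D_\ell f = D_\ell^2 g = 0$, so $(\star)$ collapses to $f\cdot f - g\cdot 0 = f^2$. For the forward implication, I would rewrite $(\star)$ as $g\,\ell(\nabla f) = f\,(\ell(\nabla g) - f)$; coprimality of $f$ and $g$ forces $g\mid\ell(\nabla g) - f$, and since $\deg(\ell(\nabla g) - f)\le D - 1 < \deg g$ this difference vanishes, giving $f = D_\ell g$. Substituting back yields $D_\ell^2 g = 0$, and $f\not\equiv 0$ gives $D_\ell g\not\equiv 0$, so $[\ell]$ has multiplicity exactly $D-1$.

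For the geometric claims I would start from the resulting expansion $g(\ell + s\bu) = s^{D-1}\big(f(\bu) + s\,g(\bu)\big)$, valid once the conditions hold. Its residual root $s_0 = -f(\bu)/g(\bu)$ exhibits the retraction as $r_\ell(\bu) = [\,g(\bu)\ell - f(\bu)\bu\,]$. Differentiating the expansion in $\bu$ by the chain rule gives $\nabla_{\ell + s\bu}g = s^{D-2}\nabla_\bu f + s^{D-1}\nabla_\bu g$; evaluating at $s = s_0$ and simplifying shows $\nabla_{r_\ell(\bu)}g$ is a nonzero scalar multiple of $f(\bu)\nabla_\bu g - g(\bu)\nabla_\bu f$, which is in turn proportional to $\Psi(\bu) = (f\nabla g - g\nabla f)/(fg)$. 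This establishes $[\Psi(\bu)] = [\nabla_{r_\ell(\bu)}g]$, after checking that the right-hand side is well defined using homogeneity of $\nabla g$. Finally, since $r_\ell$ restricts to the identity on a dense subset of $V(g)$, it is dominant onto $V(g)$, so as $\bu$ varies the points $r_\ell(\bu)$ sweep a dense subset of $V(g)$; hence $\overline{\Im\Psi}$ projectivizes to the closure of $\{[\nabla_p g] : p\in V(g)\}$, which is by definition the dual variety $\PP(V(g))^\vee$. Combined with Theorem~\ref{thm:pde}, this identifies the ML-degree-one variety parametrized by $\Psi$.

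I expect the main obstacle to be the forward direction, where both conditions must be squeezed out of the single identity $(\star)$: the degree-and-coprimality step pinning down $f = D_\ell g$, and, more importantly, the realization that the multiplicity condition is equivalent to the one derivative equation $D_\ell^2 g = 0$ rather than to an unbounded family of vanishing conditions. A secondary, more routine difficulty is the careful bookkeeping of scalar factors in the projective identity $[\Psi(\bu)] = [\nabla_{r_\ell(\bu)}g]$ and the verification that $r_\ell$ is genuinely dominant onto $V(g)$.
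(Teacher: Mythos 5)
Your proof is correct, and although it meets the paper's proof at the same two pivot points, it resolves them by genuinely different means. Both arguments begin by reducing the PDE to the identity $f\cdot\ell(\nabla g) - g\cdot\ell(\nabla f) = f^2$: the paper phrases this as the ODE $\tfrac{d}{du_1}(1/\Phi) = 1$ after choosing coordinates with $\ell = \ell_1$, and then \emph{integrates}, obtaining $1/\Phi = u_1 + h_2/h_1$ with $h_1,h_2$ free of $u_1$, from which the normal form $g = u_1 f + h$ and the multiplicity statement are read off at once. You instead stay coordinate-free and treat the identity algebraically: coprimality plus the degree bound pins down $f = \ell(\nabla g)$, and the multiplicity condition is recovered through your dictionary ``multiplicity $\deg g - 1$ at $\ell$ iff $D_\ell^2 g \equiv 0$ and $D_\ell g \not\equiv 0$,'' justified by matching the coefficient of $s^k$ in $g(\ell + s\bu)$ with $D_\ell^{D-k}g(\bu)$ via homogeneity; in the paper's coordinates this dictionary is exactly the statement that $g = u_1f+h$ with $f,h \in \CC[u_2,\dotsc,u_n]$, so the content is the same but your route avoids the integration step at the cost of proving the polarization identity. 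For the second part, the paper notes that $[\Psi] = [h\nabla f - f^2\nabla u_1 - f\nabla h]$ is independent of $u_1$, hence constant on lines through $\ell$, and that $[g\nabla f - f\nabla g]$ restricts on $V(g)$ to $[\nabla g]$; you instead differentiate the line expansion $g(\ell + s\bu) = s^{D-1}(f(\bu) + s\,g(\bu))$ and evaluate at the residual root $s_0 = -f(\bu)/g(\bu)$, which yields $[\nabla_{r_\ell(\bu)}g] = [g(\bu)\nabla_\bu f - f(\bu)\nabla_\bu g] = [\Psi(\bu)]$ by direct computation. Your version has two small advantages: it makes explicit the dominance of $r_\ell$ onto $V(g)$ (via $r_\ell|_{V(g)} = \mathrm{id}$) needed for the dual-variety claim, which the paper leaves implicit, and it makes explicit the ``lowest terms'' hypothesis on $f/g$ --- an assumption that is genuinely necessary (the statement fails for $f = cf_0$, $g = cg_0$ with $D_\ell c \neq 0$) and that the paper's proof also uses tacitly when matching $g/f$ with $(u_1h_1+h_2)/h_1$.
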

\begin{proof}
The function $\Phi$ satisfies the homaloidal PDE if and only if
\begin{equation}\label{directional-derivative}
1 = \frac{\ell(-\nabla\log\Phi)}{\Phi(\bu)} = -\frac{\ell( \nabla_{\bu} \Phi)}{\Phi(\bu)^2} 
 = \left[\frac{d}{dt}\frac{1}{\Phi(\bu + t\ell)}\right]_{t=0}.
\end{equation}
Pick a basis $(\ell_1,\dotsc,\ell_n)$ of $\mathcal L^{\ast}$ such that $\ell=\ell_1$. In this basis, elements of $\mathcal L^{\ast}$ are written as $\bu=\sum_{i=1}^n u_i \ell_i$ and \eqref{directional-derivative} is equivalent to
\[
\frac{d}{du_1}\frac{1}{\Phi(\bu)} = 1.
\]
This equation is satisfied if and only if
\[
\frac{1}{\Phi(u)} = u_1 + \frac{h_2}{h_1} = \frac{u_1h_1 + h_2}{h_1},
\]
where $h_1,h_2\in \CC[u_2,\dotsc, u_n]$ and $\deg h_2 = \deg h_1+1$.
The latter is equivalent to
\begin{equation}\label{tangent-cone}
f \in \mathbb C[u_2,\dotsc,u_n] \quad \text{and} \quad
g = u_1 f + h \quad
\text{for some} \quad h\in \CC[u_2,\dotsc,u_n]_{\deg f+1}.
\end{equation}
Since $\ell$ is the origin in the affine chart $u_1=1$, \eqref{tangent-cone} holds if and only if 
$\ell$ is a point of $V(g)$ of multiplicity $\deg g - 1$ with $f = \ell(\nabla_{\bu} g)$.

Let $\Phi$ as above satisfy the homaloidal PDE. Use the same basis as above, so that $\ell = \ell_1$ and $g = u_1 f + h$ where $f,g\in \CC[u_2,\dotsc,u_n]$. Then
\begin{align*}
[\Psi] &= [\nabla\Phi] = [\nabla\log f - \nabla\log(u_1 f + h)] \\ &= \left[\frac{h\nabla f - f^2\nabla u_1 - f\nabla h}{f(u_1 f + h)}\right] = [h\nabla f - f^2\nabla u_1 - f\nabla h].
\end{align*}
The right-hand side does not depend on $u_1$. Hence the function $[\Psi]$ is constant along the line through $\bu$ and $\ell$. Moreover,
\[
[\Psi] = \left[\frac{g\nabla f - f\nabla g}{g^2}\right] = [g\nabla f - f\nabla g],
\]
thus $[\Psi]$ is equivalent to a rational function that is defined on $V(g)$. For general $\bu\in\mathcal L^{\ast}$ we have $g(r_\ell(\bu)) = 0$ and $f(r_\ell (\bu)) \neq 0$, thus
\[
[(g\nabla f - f\nabla g)(r_\ell (\bu))] = [f(r_\ell (\bu)) \nabla_{r_\ell (\bu)}g]
= [\nabla_{r_\ell (\bu)}g].
\]
Since $r_\ell (\bu)$ is on the line through $\bu$ and $\ell$, we conclude that $[\Phi(\bu)] = [\nabla_{r_\ell (\bu)} g]$.
\end{proof}

\section{Examples of ML degree one varieties}
\label{sec:constructing} 

We conclude the paper with examples of ML degree one varieties; see Table~\ref{table:pde-examples} for an overview. \resp{In some of these examples, we take the ML degree with respect to polynomials $F$ other than the determinant, but each can be turned into an example of a variety of ML degree one in the sense of~\cite{SU10}, by applying Proposition~\ref{prop:statistical}}. The first two rows of the table are graphical models examples from Section~\ref{sec: graphicalmodels}. We give one concrete example for each remaining row of Table~\ref{table:pde-examples}.

The next three examples involve a smooth quadric $Q$. After fixing a basis, we write 
 $Q(x) = x^\top A x$, where $x$ is a column vector and $A$ is an invertible symmetric matrix.
For vectors $x$ and $y$, we define $A(x,y) \coloneqq x^\top Ay$, so $Q(x) = A(x,x)$. 
The dual quadric $Q^\vee$ is defined on the dual vector space. With respect to our choice of basis, we have $Q^\vee(\bu)=\bu A^{-1}\bu^\top$, where $\bu$ is a row vector.

Note that $\nabla_x Q = 2x^\top A$ and $\nabla_{\bu} Q^\vee = 2A^{-1}\bu^\top$.
The points on $Q$ and $Q^\vee$ relate via 
\begin{equation}
    \label{eqn:relation_1}
    Q(\nabla_{\bu} Q^\vee) = 4Q^\vee(\bu).
\end{equation}
The gradients of $Q$ and $Q^\vee$ have the following symmetry
\begin{equation}
    \label{eqn:relation_3}
(\nabla_x Q)(y) = 2 A(x,y) \qquad \boldsymbol{v}(\nabla_{\bu} Q) = 2A^{-1}(\boldsymbol{v}, \bu).
\end{equation}
The ambient vector space and its dual relate via $Q$ and $Q^\vee$, since
\begin{equation*}
    A(\nabla_{\bu} Q^\vee, x) = 2\bu(x),
\end{equation*}
where $\bu (x)$ is the multiplication $\bu x$ of row vector $\bu$ with column vector $x$. 

We have the following biduality. Take $\ell \in V(Q^\vee)$. 
Define $p$ to cut out the tangent hyperplane of $Q^\vee$ at $\ell$; that is, define $p \coloneqq \nabla_{\ell} Q^\vee$.
Then $p \in V(Q)$, by~\eqref{eqn:relation_1}. 
We can compute $\ell = \frac14 \nabla_{p} Q$, and hence $\ell$ cuts out the tangent hyperplane of $Q$ at $p$.

The following example is the degree-two case of Theorem \ref{thm:linear_case}.

\begin{example}\label{ex:specialquadrics}
The statement of Theorem~\ref{thm:linear_case} involves a rational function $\Phi = \frac{f}{g}$. Set $g := Q^\vee$. 
Then $X=V(Q)$. 
Let $\ell$ be such that $Q^\vee(\ell) = 0$. This setup satisfies the hypotheses of Theorem~\ref{thm:linear_case}, since $Q^\vee$ is degree two and $\ell$ is a smooth point on $Q^\vee$; i.e., a point of multiplicity one. According to the theorem, $Q$ is ML degree one with respect to $\ell$, and the denominator of the corresponding solution to the homaloidal PDE is $g$, while the numerator is  $f =\ell(\nabla_{\bu} Q^{\vee})$. We simplify the expression of $f$ using the equations in \eqref{eqn:relation_3}, concluding that $f = \bu(\nabla_\ell Q^\vee) = \bu(p)$, where $p  := \nabla_\ell Q^\vee$.
Hence $\myMLD_\ell(X) = 1$ and the solution to the homaloidal PDE is 
$$
\Phi(\bu) = \frac{\bu(p)}{Q^\vee(\bu)}.
$$
As a concrete instance, let $Q^\vee(u_0, u_1,u_2,u_3) = u_0u_1 - u_2u_3$ and $\ell = \begin{bsmallmatrix}1 
 &0& 0&0\end{bsmallmatrix}$. We have $\nabla_{\bu} Q^{\vee} = \begin{bsmallmatrix}u_1 \\ u_0 \\ -u_3 \\ - u_2 \end{bsmallmatrix}$ and $p = \nabla_{\ell} Q^{\vee} = \begin{bsmallmatrix}0 \\ 1 \\ 0 \\ 0 \end{bsmallmatrix}$. Then 
$
\Phi(\bu) = \frac{u_1}{u_0u_1 - u_2u_3}
$,
with the associated variety cut out by $Q(x) = x_0x_1 - x_2x_3$. We compute
$$
\mathrm{MLE}(\bu) = -\nabla_{\bu} \log \Phi = \frac{1}{u_0u_1 - u_2u_3}\begin{bsmallmatrix}u_1 \\ u_0 \\ -u_3 \\ - u_2 \end{bsmallmatrix} - \frac{1}{u_1}\begin{bsmallmatrix}0 \\ 1 \\ 0 \\ 0 \end{bsmallmatrix}.
$$
\end{example}

The next example verifies that all smooth quadrics are homaloidal polynomials, as seen in Theorem~\ref{higher-linear}. We compute the associated solutions to the homaloidal PDE.

\begin{example} 
 \label{ex: smoothquadric}
    We show that $\mathrm{MLD}_Q(\PP^n) =1$ and that the solution to the homaloidal PDE is the reciprocal of the dual quadric 
    $$
    \Phi(\bu) = \frac{4}{Q^\vee(\bu)}.
    $$
This is a consequence of the fact that $Q(\nabla_{\bu} Q^\vee) = 4Q^\vee(\bu)$, which then implies
$$
Q(-\nabla \log \Phi) = Q(-\nabla_{\bu} \log \frac{4}{Q^\vee}) = \frac{ Q(\nabla_{\bu} Q^\vee) }{(Q^\vee)^2 }= \frac{ 4}{Q^\vee }.$$
Hence $\Phi(\bu)$ solves the homaloidal PDE. Moreover, 
$$
\mathrm{MLE}(\bu) = -\nabla_{\bu} \log \Phi = \frac{\nabla_{\bu} Q^\vee}{Q^\vee(\bu)}.
$$
The associated variety $X$ of $\Phi$ is $\PP^n$ because, as described in Corollary \ref{cor:pdeSym}, 
$$
X = \overline{\mathrm{Im}(-\nabla \log Q^\vee) }= \overline{\mathrm{Im}(\nabla Q^\vee) }
$$
and $\nabla Q^\vee$ is an invertible linear map, which is dominant. 
\end{example}

Our third example verifies the fact that a product $Q\cdot \ell$ of a smooth quadric $Q$ and a linear form $\ell$, with $V(\ell)$ tangent to $V(Q)$, is a homaloidal polynomial. We compute the associated solutions to the homaloidal PDE.

\begin{example}
\label{ex:conictangent} 
Let $\dim \vectorspace = n+1$. Let $F = Q \cdot \ell$ be the product of a degree 2 polynomial $Q$ and the equation of its tangent hyperplane $\ell$ at a point $p  = \nabla_\ell Q^\vee$. We show that $F$ is homaloidal. This extends Theorem~\ref{plane-curves}(c), which is the case $n=2$. We show that the solution to the homaloidal PDE is
$$
\Phi(\bu) = 16\frac{\bu(p)}{Q^\vee(\bu)^2}
$$
where $Q^\vee$ is the dual quadric and $p$ the point of tangency. Let $Q(x) = x^T A x$. With this we verify the first part of the suggested solution
\begin{align*}
    - \nabla \log \Phi & =  2 \frac{\nabla_{\bu}  Q^\vee}{Q^\vee} - \frac{p}{\bu(p)}  \quad (= \mathrm{MLE}(\bu)).  \\
    Q(-\nabla \log \Phi) & = Q(2 \frac{\nabla_{\bu}  Q^\vee}{Q^\vee} - \frac{p}{\bu(p)} ) =  
    Q(2\frac{\nabla_{\bu}  Q^\vee}{Q^\vee} ) - 2 A(2 \frac{\nabla_{\bu}  Q^\vee}{Q^\vee}, \frac{p}{\bu(p)} ) + Q(\frac{p}{\bu(p)} ) \\ 
    & = 16\frac{Q^\vee}{(Q^\vee)^2} -  8\frac{\bu(p)}{Q^\vee \bu(p)} + 0 = \frac{8}{Q^\vee}.
\end{align*}
Moreover  we use \eqref{eqn:relation_3}, $\ell(\nabla_{\bu} Q^{\vee}) =\bu(\nabla_{\ell} Q^{\vee}) $, which yields 
\begin{align*}
    \ell(- \nabla \log \Phi) = \ell(2 \frac{\nabla_{\bu}  Q^\vee}{Q^\vee} - \frac{p}{\bu(p)}) =  2\ell( \frac{\nabla_{\bu}  Q^\vee}{Q^\vee} ) - 0 = 2\bu( \frac{\nabla_{\ell}  Q^\vee}{Q^\vee} ) =  \frac{2\bu(p)}{Q^\vee}. 
\end{align*}
To conclude, $\Phi(\bu) =  16\frac{\bu(p)}{Q^\vee(\bu)^2}$ yields a solution to the homaloidal PDE, since
$$
Q\cdot\ell(-\nabla_{\bu} \Phi) = \frac{8}{Q^\vee} \frac{2\bu(p)}{Q^\vee} = \Phi.
$$
We show that the image of $\nabla \log \Phi$ is all of $\vectorspace$, i.e., that the map is dominant, by finding its inverse. More specifically, we show that $-\nabla \log \Phi = (\nabla \log F)^{-1}$. We compute
$$
 \nabla_{(-\nabla_{\bu} \log \Phi)} \log F = \frac{\ell}{\ell(-\nabla_{\bu} \log \Phi)} + \frac{1}{Q(-\nabla_{\bu} \log \Phi)} \nabla_{(-\nabla_{\bu} \log \Phi)} Q.
$$
We expand these expressions to obtain
\begin{align*}
\ell(-\nabla_{\bu} \log \Phi) &= \ell(2 \frac{\nabla_{\bu}  Q^\vee}{Q^\vee(\bu)} - \frac{p}{\bu(p)}) = \frac{2\ell( \nabla_{\bu}  Q^\vee)}{Q^\vee(\bu) } = \frac{2\bu(p)}{Q^\vee(\bu) };  \\
Q(-\nabla_{\bu} \log \Phi) &=  Q(2 \frac{\nabla_{\bu}  Q^\vee}{Q^\vee(\bu)} - \frac{p}{\bu(p)}) = Q(2 \frac{\nabla_{\bu}  Q^\vee}{Q^\vee(\bu)}) -2A(2 \frac{\nabla_{\bu}  Q^\vee}{Q^\vee(\bu)}, \frac{p}{\bu(p)}) + 0  \\
&= \frac{16}{Q^\vee(\bu)} -\frac{8}{\bu(p) \cdot Q^\vee(\bu)}\ell(\nabla_{\bu}  Q^\vee) = \frac{8}{Q^\vee(\bu)};\\
 \nabla_{(-\nabla_{\bu} \log \Phi)} Q &=  \nabla_{(2 \frac{\nabla_{\bu}  Q^\vee}{Q^\vee(\bu)} - \frac{p}{\bu(p)})} Q =  \frac{2}{Q^\vee(\bu)}\nabla_{(\nabla_{\bu}  Q^\vee)} Q - \frac{1}{\bu(p)}\nabla_{p} Q = 8\frac{\bu}{Q^\vee(\bu)} - \frac{4}{\bu(p)}\ell,
\end{align*}
where we have used that $\nabla_xQ$ is linear in $x$ and that $\nabla_{\nabla_{\bu} Q^\vee} Q = 4\bu$. Thus
\begin{align*} 
 \nabla_{(-\nabla_{\bu} \log \Phi)} \log F = \frac{Q^\vee(\bu)}{2\bu(p)}\ell +  \frac{Q^\vee(\bu)}{8}   (8\frac{\bu}{Q^\vee(\bu)} - \frac{4}{\bu(p)}\ell)  = \bu. 
\end{align*}
\end{example}

\begin{example}\label{ex:squareandcorner}
Let $X = V(k_{23},k_{13},k_{12}^2-k_{11}k_{22}+k_{11}k_{33})$. This is the reciprocal variety of the linear space \[ \begin{pmatrix}x&y&0\\y&z&0\\0&0&z\end{pmatrix} .\] 
This is an instance of a \emph{colored covariance graphical model}; it is the graph with three nodes that has a single edge $1 - 2$ and nodes $2$ and $3$ with the same color.
We can show that it has ML degree one, by computing $\Phi$ and $\Psi$ directly.     This can be verified with a computer algebra system, such as
\texttt{Macaulay2}~\cite{M2}. Code for this and the next example can be found in \url{https://mathrepo.mis.mpg.de/GaussianMLDeg1}. They are
\begin{align*}
    \Phi(S) &= \frac{4 s_{22}}{(s_{22} + s_{33} )^2( s_{11}s_{22}-s_{12}^2) } \\
    \Psi(S) &= \frac{-1}{s_{22}}\begin{pmatrix}0&0&0\\0&1&0\\0&0&0\end{pmatrix} + \frac{2}{s_{22} + s_{33}}\begin{pmatrix}0&0&0\\0&1&0\\0&0&1\end{pmatrix} + \frac{1}{s_{11}s_{22} - s_{12}^2} \begin{pmatrix}s_{22}&-s_{12}&0\\-s_{12}&s_{11}&0\\0&0&0\end{pmatrix}
\end{align*}
\end{example}

It would be interesting to generalize Example~\ref{ex:squareandcorner} to general reciprocal linear spaces. This is the missing entry in Table~\ref{table:pde-examples}.

\begin{example}
\label{ex: hyperplane} 
    Consider the hyperplane $X = V(k_{11} - k_{22}) \subset \Sym(\CC^3)$. This is the colored graphical model corresponding to the undirected 3-cycle with first two nodes having the same color.  It has ML degree one with
\[
\Phi(S) = \frac{4s_{33}}{\det((gSg^{\top})_{23})\det ((gSg^{\top})_{13})} \quad
\text{where}\quad
g = \begin{pmatrix}
1&1&0\\
1&-1&0\\
0&0&1/2\\
\end{pmatrix}.
\]
    \end{example}

We conclude by generalizing Example~\ref{ex: hyperplane} to higher dimensional hyperplanes.

\begin{proposition}
\label{prop:g-equivalence}
    The solutions to the homaloidal PDE for ML degree one hyperplanes in $\Sym(\CC^m)$ are the rational functions
    \begin{align*}
    \Phi(S) = \frac{    
    \det\left((gSg^\top)_{[n]\setminus \{1,2\}} \right)}{  \det\left((gSg^\top)_{[n]\setminus \{1\}} \right)\det\left((gSg^\top)_{[n]\setminus \{2\}} \right)},
    \end{align*}
where $g \in \SL(\mathcal L)$.
\end{proposition}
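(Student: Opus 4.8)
The plan is to reduce the classification of ML-degree-one hyperplanes to a single congruence orbit and then transport one explicit solution across that orbit. Write a hyperplane as $H_A = \{S \in \Sym(\CC^m) : \tr(AS) = 0\}$, where the symmetric matrix $A \neq 0$ is determined up to scalar, so that projective hyperplanes $\PP(H_A) \subseteq \PP(\Sym(\CC^m))$ correspond to nonzero normals $A$ modulo scaling. The congruence action $S \mapsto gSg^\top$ of $g \in \SL_m(\CC)$ (realizing the subgroup of $\SL(\mathcal L)$ meant in the statement) fixes $\det$ and carries $H_A$ to a hyperplane whose normal is congruent to $A$; since over $\CC$ two symmetric matrices are congruent exactly when they have equal rank, the orbits of hyperplanes are indexed by $r \coloneqq \rk(A) \in \{1,\dots,m\}$. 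First I would establish the ML-degree formula $\myMLD_{\det}(\PP(H_A)) = \rk(A) - 1$, which immediately gives that ML degree one is equivalent to $\rk(A) = 2$, i.e. to a single congruence orbit.

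To prove this formula, I use Proposition~\ref{lem:homaloidal-equiv}(a), so that $\myMLD_{\det}(\PP(H_A))$ equals the degree of the gradient map $\nabla \log(\det|_{H_A}) : H_A \dashto H_A^\ast$. Identifying $H_A^\ast \isom \Sym(\CC^m)/\langle A\rangle$ via the trace pairing and using $\nabla \log\det = S^{-1}$, this map is $S \mapsto [S^{-1}]$. Its degree counts, for general $T$, the matrices $S \in H_A$ with $S^{-1} = T + tA$ for some $t$, i.e. the scalars $t$ with $\tr\!\big(A(T+tA)^{-1}\big) = 0$. The key observation is that $\tr(A(T+tA)^{-1}) = \tfrac{d}{dt}\log\det(T+tA) = p'(t)/p(t)$ for $p(t) \coloneqq \det(T+tA)$, so the preimages correspond to the critical points of $p$. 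Since $p$ has degree exactly $\rk(A)$ for general $T$, it has $\rk(A) - 1$ critical points, giving the formula.

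With the classification in hand, I would fix the representative $H_0 = \{K_{12} = 0\}$ of the rank-two orbit. This is the Gaussian graphical model of the complete graph on $m$ vertices with the edge $\{1,2\}$ removed, which is chordal with weak decomposition $\{1\} \amalg_{\{3,\dots,m\}} \{2\}$. Proposition~\ref{prop: chordal-mle} then yields its solution $\Phi_0(S) = \det(S_{[m]\setminus\{1,2\}}) / \big(\det(S_{[m]\setminus\{1\}})\det(S_{[m]\setminus\{2\}})\big)$, which is exactly the asserted formula at $g = \id$. Finally I would verify that the homaloidal PDE of Corollary~\ref{cor:pdeSym} is covariant under congruence: for $g \in \SL_m(\CC)$, a chain-rule computation using that the adjoint of $S\mapsto gSg^\top$ in the trace form is $M \mapsto g^\top M g$, together with $\det(g^\top M g) = \det(M)$, shows that $\Phi_0 \circ (S\mapsto gSg^\top)$ solves the PDE and parametrizes the congruent model $g^\top H_0\, g$. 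As $g$ ranges over $\SL_m(\CC)$ the normal of $g^\top H_0\, g$ ranges over all rank-two symmetric matrices up to scalar, so these functions exhaust the solutions for ML-degree-one hyperplanes.

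The main obstacle is the genericity bookkeeping in the degree count: I must check that for general $T$ the polynomial $p(t) = \det(T+tA)$ really attains degree $\rk(A)$ (e.g. via a Schur-complement computation after putting $A$ in the normal form $\diag(I_r,0)$, which is legitimate since the degree of the gradient map is a congruence invariant) and that its $\rk(A)-1$ critical points all avoid the zeros of $p$, so that each yields a genuine invertible $S \in H_A \setminus V(\det)$ and the correspondence with preimages is a bijection. The remaining delicate point is tracking the transpose and the $\SL$-normalization in the covariance step, so that the substitution $S \mapsto gSg^\top$ appearing in the proposition is the correct one; fixing $\det g = 1$, which is harmless because rescaling $g$ only rescales $S$, keeps the PDE free of determinantal constants.
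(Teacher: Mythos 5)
Your proposal is correct, and its overall architecture coincides with the paper's: reduce the ML-degree-one hyperplanes to the single congruence orbit of $H_0\coloneqq V(k_{12})$, recognize that representative as the chordal graphical model of the complete graph minus the edge $\{1,2\}$ (weak decomposition $\{1\}\amalg_C\{2\}$ with $C=[m]\setminus\{1,2\}$), so that Proposition~\ref{prop: chordal-mle} supplies the base solution, and then transport it through the congruence action via the chain-rule identity $\Phi_{g\cdot X,\vectorspace,F}=\Phi_{X,\vectorspace,F}\circ g^\top$, with Theorem~\ref{thm:pde} guaranteeing that the transported function is the unique solution attached to the congruent hyperplane. The genuine difference is the classification step: the paper simply cites \cite[Proposition 4.3]{AGKMS21} for the fact that the hyperplane $H_A=\{S\in\Sym(\CC^m):\tr(AS)=0\}$ has ML degree one exactly when $\rk(A)=2$, whereas you prove it from scratch — identifying $H_A^\ast\isom\Sym(\CC^m)/\langle A\rangle$ via the trace pairing, using Proposition~\ref{lem:homaloidal-equiv}(a) to turn the ML degree into the number of scalars $t$ with $\tr\bigl(A(T+tA)^{-1}\bigr)=0$, and recognizing this as $p'(t)/p(t)=0$ for $p(t)=\det(T+tA)$, a polynomial of degree $\rk(A)$ for general $T$. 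This buys a self-contained argument and in fact the stronger formula $\myMLD_{\det}(\PP(H_A))=\rk(A)-1$, at the cost of the genericity bookkeeping you flag (leading coefficient of $p$, simple critical points avoiding $V(p)$), which is routine after putting $A$ in the form $\diag(I_r,0)$ — legitimate because congruence rescales $F$ by a constant and this leaves the ML degree unchanged by~\eqref{eq:change-of-f}. Your transpose and normalization bookkeeping also matches the paper's conventions: $\Phi_0\circ(S\mapsto gSg^\top)$ is the solution for the hyperplane $g^\top H_0\, g$, and $\SL(\CC^m)$-congruence reaches every rank-two normal up to a nonzero scalar, which suffices since a hyperplane determines its normal only up to scalar.
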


\begin{proof}
The action $g \cdot K = gKg^{\top}$ gives a containment $\mathrm{SL}(\CC^m) \subset \mathrm{SL}(\Sym(\CC^m))$. The hyperplane $X= V(\mathrm{tr}(AK))$ is ML degree one if and only if $A$ is rank two, 
by \cite[Proposition 4.3]{AGKMS21}. All such hyperplanes are in the same orbit under the action of $\mathrm{SL}(\Sym(\CC^m))$. Hence there exists $g$ such that \[ gAg^{\top} = \left( \begin{array}{@{}c|c@{}}
\begin{matrix} 0 & 1 \\ 1 & 0 \end{matrix} & \begin{matrix} \bf{0} \\ \bf{0} \end{matrix} \\
\hline
\begin{matrix} \bf{0} & \bf{0} \end{matrix} & \begin{matrix} \bf{0} \end{matrix} \\
\end{array} \right),\] 
where $\bf{0}$ is a matrix of zeros of appropriate size.
Thus $X$ is $\mathrm{SL}(\CC^m)-$equivalent to the graphical model $k_{12}=0$. 

We next show a more general statement, namely that if $F$ is invariant under the action of $g\in \SL(\mathcal L)$, then $\myMLD_F(X) = 1$ implies $\myMLD_F(g\cdot X) = 1$. Moreover,
$
\Phi_{g\cdot X, \vectorspace, F} \, = \,  \Phi_{X, \vectorspace, F} \circ g^{\top},
$
where $g^\top(\bu)\coloneqq \bu\circ g$. 
    The map 
    $$\_ \circ g^\top: (\vectorspace^\vee)^\vee \to (\vectorspace^\vee)^\vee
    $$ is the double-dual $(g^{\top})^{\top}$, which is identified with $g$. We use the chain rule to compute
\[
    \nabla_{\bu} (\log \Phi_{X, \vectorspace, F} \circ g^{\top}) = 
    (\nabla_{g^{\top}(\bu)} \log \Phi_{X, \vectorspace, F}) \circ g^{\top} = 
    g\cdot (\nabla_{g^{\top}(\bu)} \log \Phi_{X, \vectorspace, F}). 
\]
The model associated to $\Phi_{X, \vectorspace, F} \circ g^{\top}$ is $g \cdot X$, by Theorem \ref{thm:pde}. The function $\Phi_{X, \vectorspace, F} \circ g^{\top}$ is a solution to homaloidal PDE, since $F$ is invariant under the action of $g$, and therefore 
\begin{align*}
     F(-\nabla_{\bu} (\log \Phi_{X, \vectorspace, F} \circ g^{\top} ))  
    & = F(g \cdot (-\nabla_{g^{\top}(\bu)} \log \Phi_{X, \vectorspace, F}))  \\
    &= F (-\nabla_{g^{\top}(\bu)} \log \Phi_{X, \vectorspace, F})  = \Phi_{X, \vectorspace, F} \circ g^{\top}.
\end{align*}
It remains to apply our group action to the expression for $\Phi$ in Proposition \ref{prop: chordal-mle}.
\end{proof}

\bigskip

\subsection*{Acknowledgments}
KK was supported by the Wallenberg AI, Autonomous Systems and Software Program (WASP) funded by the Knut and Alice Wallenberg Foundation.
LG was supported by the VR grant [NT:2018-03688]. OM was supported by Brummer \& Partners MathDataLab, Göran Gustafssons Stiftelse UU/KTH, and the European Research Executive Agency (101061315MIAS-HORIZON-MSCA-2021-PF-01).
AS was supported by the Society of Fellows at Harvard University.

\ifarxiv
\bibliographystyle{alpha}
\else
\bibliographystyle{journalbibstyle}
\fi
\bibliography{literature}

\begin{landscape}
\begin{table}[h]
\label{table:pde-examples}
\begin{center}
\begin{tabular}{c*{4}{>{$}Sc <{$}}cc}
\toprule
Ex.&\vectorspace & F & X & \makecell{\Phi_{X,\vectorspace,F}} &
Description & General result
\\
\midrule
\ref{ex: 1-2-3} & \Sym (\CC^3) & \det & V(k_{13}) & \dfrac{\det(S_2)}{\det(S_{12})\det(S_{23})} & \makecell{Undirected \\ graphical model} & Prop.~\ref{prop: chordal-mle}\\
\ref{ex: unshieldedcollider} & \Sym (\CC^3) & \det & V(k_{13}k_{23} - k_{12}k_{33}) & \dfrac{s_{11}s_{22}-s_{12}^2}{s_{11}s_{22}\det(S)} & \makecell{Directed \\ graphical model} & Prop.~\ref{prop:directed_phi}\\
\ref{ex:specialquadrics}& \mathbb{P}^2  & \ell & V(Q)  & \dfrac{\bu(p)}{Q^\vee(\bu)} & \makecell{Quadric curve, \\ $F$ linear} & Thm.~\ref{thm:linear_case} \\
\ref{ex: smoothquadric} & \mathbb{P}^n & Q & \mathbb{P}^n & \dfrac{4}{Q^\vee(\bu)} & \makecell{Linear space, \\ $F$ quadratic} & Thm.~\ref{huh-result}  \\
\ref{ex:conictangent} & \mathbb P^n & Q\ell & \mathbb P^n &  16\dfrac{\bu(p)}{Q^\vee(\bu)^2} &
\makecell{Linear space, \\ $F$ special}  & \makecell{Thm.~\ref{plane-curves} \\ ($n=2$)} \\
\ref{ex:squareandcorner} & \Sym (\CC^3) & \det & V(k_{32},k_{31},k_{21}^2-k_{11}k_{22}+k_{11}k_{33}) & \makecell{\dfrac{-4s_{22}}{(s_{22} + s_{33})^2(s_{12}^2 - s_{11}s_{22})}} & \makecell{ Reciprocal \\ linear space} & – \\
\ref{ex: hyperplane} & \Sym (\CC^3) & \det & V(k_{11} - k_{22}) & \dfrac{4s_{33}}{\det (g\cdot S)_{23}\det (g\cdot S)_{13}} & \makecell{  Hyperplane} & Prop.~\ref{prop:g-equivalence} \\
\bottomrule
\end{tabular}
\end{center}
\caption{Some varieties of ML degree one.} 
\end{table}
\end{landscape}

\end{document}